\theoremstyle{plain}
\newtheorem{theorem}{Theorem}[section]
\newtheorem*{theorem*}{Theorem}
\newtheorem*{maintheorem*}{Main Theorem}
\newtheorem{proposition}[theorem]{Proposition}%[section]
\newtheorem{corollary}[theorem]{Corollary}%[section]
\newtheorem{lemma}[theorem]{Lemma}%[section]
\newtheorem*{conjecture*}{Conjecture}%[section]
\theoremstyle{definition}
\newtheorem{definition}[theorem]{Definition}
\newtheorem*{definition*}{Definition}%[section]
\newtheorem*{example*}{Example}
\newtheorem*{notation*}{Notation}
\newtheorem*{notation-conv*}{Notation and convention}
\newtheorem*{convention*}{Convention}
\theoremstyle{remark}
\newtheorem{remark}[theorem]{Remark}%[sectio]
\newcommand{\Z}{{\mathbb Z}}
\newcommand{\C}{{\mathbb C}}
\newcommand{\I}{\mathbf{1}}
\newcommand{\SU}{{\mathrm{SU}(2)}}
\newcommand{\SO}{{\mathrm{SO}(3)}}
\newcommand{\SL}{{\mathrm{SL}_2(\C)}}
\newcommand{\PSL}{{\mathrm{PSL}_2(\C)}}
\newcommand{\trace}{{\rm tr}\,}
\newcommand{\trans}[1]{{}^t\!#1}
\newcommand{\tr}{\mathop{\mathrm{tr}}\nolimits}
\newcommand{\im}{\mathop{\mathrm{Im}}\nolimits}
\newcommand{\bm}[1]{\mbox{\boldmath{$#1$}}}
\newcommand{\hiota}{\widehat{\iota}}
\newcommand{\hPhi}{\widehat{\Phi}}
\newcommand{\wtilde}[1]{\widetilde{#1}}
\newcommand{\co}{\colon\thinspace}
\newcommand{\knotexterior}{E_K}
\newcommand{\branchedcoverZHS}[1]{B_{#1}}
\newcommand{\homPiICnEk}{p_*}
\newcommand{\nclosure}[1]{\langle\!\langle #1 \rangle\!\rangle}
\newcommand{\PhiII}{\Phi}
\newcommand{\hatPhiII}{\widehat{\Phi}}
\newcommand{\interior}{{\rm int}\,}
\newcommand{\ctransf}{\sigma}
\newcommand{\pullbacki}{i^*}
\newcommand{\pullbackj}{j^*}
\newcommand{\pullbackp}{p^*}
\newcommand{\pullbackq}{q^*}
\newcommand{\isomInVanKampen}{h}
\begin{document}

\title[On the meridonal trace--free slice of the character variety]
{On the geometry of the slice of trace--free $\SL$-characters of a knot group}

\thanks{
  The first author had been partially
  supported by JSPS Research Fellowships for Young Scientists and 
  the Grant-in-Aid for Young Scientists (Start-up). The second author had been partially
  supported by the 21st century COE program at Graduate School of
  Mathematical Sciences, University of Tokyo.
}

\author{Fumikazu Nagasato \and Yoshikazu Yamaguchi}

\address[F. Nagasato]{
              Department of Mathematics, Meijo University, 
              Shiogamaguchi Tempaku, Nagoya 468-8502, Japan
            }
 \email{fukky@meijo-u.ac.jp}
\address[Y. Yamaguchi]{
              Department of Mathematics, Tokyo Institute of Technology
              2-12-1 Ookayama, Meguro-ku Tokyo, 152-8551, Japan
}
\email{shouji@math.titech.ac.jp} 

\date{}

%%%%%%%%%%%%%%%%%%%%%%%%%%%%%%%%%%%%%%%%%% 
% Subject classification 
%%%%%%%%%%%%%%%%%%%%%%%%%%%%%%%%%%%%%%%%%% 
\keywords{Character varieties \and branched covers \and binary dihedral representations 
\and knots \and metabelian representations}
\subjclass{57M27 \and 57M05 \and 57M12}

%%%%%%%%%%%%%%%%%%%%%%%%%%%%%%%%%%%%%%%%%% 
% abstract
%%%%%%%%%%%%%%%%%%%%%%%%%%%%%%%%%%%%%%%%%% 
\begin{abstract}

  Let $K$ be a knot in an integral homology $3$--sphere $\Sigma$ with
  exterior $E_K$, and let $B_2$ denote the $2$--fold branched cover of
  $\Sigma$ branched along $K$. We construct a map $\Phi$ from the
  slice of characters with trace free along meridians in the
  $\SL$-character variety of the knot exterior $E_K$ to the
  $\SL$-character variety of $2$--fold branched cover $B_2$. When this
  map is surjective, it describes the slice as the $2$--fold branched
  cover over the $\SL$-character variety of $B_2$ with branched locus
  given by the abelian characters, whose preimage is precisely the set
  of metabelian characters. We show that each of metabelian character
  can be represented as the character of a binary dihedral
  representation of $\pi_1(E_K)$. The map $\Phi$ is shown to be
  surjective for all 2-bridge knots and all pretzel knots of type $(p,
  q, r)$. An extension of this framework to $n$--fold branched covers
  is also described.
\end{abstract}

%%%%%%%%%%%%%%%%%%%%%%%%%%%%%%%%%%%%%%%%%%%%%%%%%%%%%%%%%%%%%%%%%%%% 
% body of paper
%%%%%%%%%%%%%%%%%%%%%%%%%%%%%%%%%%%%%%%%%%%%%%%%%%%%%%%%%%%%%%%%%%%% 

\maketitle

\section{Introduction}
The purpose of this paper is to explore the relationship between
$\SL$-representations for a homology knot exterior and those for a
finite cyclic cover over an integral homology sphere $\Sigma$ with
branch set the knot.  We define a correspondence from
$\SL$-representations of the fundamental groups of knot exteriors with
trace of the meridian fixed to those of finite cyclic covers branched
along the knots. Then we describe features of this correspondence for knots
in $S^3$ and $2$--fold branched covers in terms of character
varieties.

In the interaction between knot theory and $3$-dimensional
topology, special values of polynomial invariants of knots often give
good expressions for topological invariants of closed $3$-manifolds.
Fox's formula is a famous bridge between knot theory and
three dimensional topology.  This formula says that the product of
values of the Alexander polynomial of a knot at roots of unity gives
the order of the first homology group of finite cyclic cover branched
along the knot.  Behind such a phenomenon, the representation space
and the character variety of the fundamental group of a manifold has
played an important role. Our observation was motivated to understand
the correspondence between representations and characters of the
fundamental groups of knot exteriors and finite cyclic covers with 
branch set the knot behind Fox's formula.

The key to such a correspondence is choosing a subset of
representation spaces and character varieties for knot exteriors. 
The $G$-character variety of a manifold $M$ can be considered as the set
of characters of $G$-representations of the fundamental group
$\pi_1(M)$ with a structure of affine variety (for a
precise definition for $G=\SL$, refer to Subsection
$\ref{subsection:CV}$ or~\cite{CS:1983}).  
Here we recall some beautiful results using the $\SU$-character varieties 
shown by A. Casson and X.-S. Lin. 

Casson introduced an invariant for an integral homology $3$--sphere $\Sigma$, 
so-called the Casson invariant, originally by using the intersection 
between $\SU$-character varieties associated to a Heegaard splitting 
of $\Sigma$. More precisely, the character varieties of two handlebodies 
associated to a Heegaard splitting of $\Sigma$ are embedded 
in that of the common boundary surface of the handlebodies. 
Then the Casson invariant is defined as a half of the integer obtained 
by counting the algebraic intersection of the embedded 
character varieties of the handlebodies.
(see~\cite{Akbulut-MacCartht,saveliev99:_lectur_topol_of_manif} for
expositions). X.-S. Lin~\cite{Lin:CassonLin} defined an invariant for
a knot $K$ in $3$--sphere $S^3$, now known as the Casson--Lin
invariant, by applying similar idea to the $\SU$-character variety of
the fundamental group of the knot exterior $\knotexterior$ with 
{\it trace free along meridians}, whose representations send meridians to
trace zero matrices. Lin showed that the Casson--Lin invariant is a
half of the signature of the knot. 

Afterward, C. Herald generalized
the Casson--Lin invariant by using gauge theoretic methods
in~\cite{herald97:_flat_connec_alexan_invar_and_casson} and
M. Heusener and J. Kroll also have studied the same issue by
topological methods in~\cite{HeusenerKroll98} to consider other trace condition
of meridians indexed by $t \in (-2, 2)$. The knot invariant by
Herald's and Heusener--Kroll's generalization corresponds to the
equivariant signature of the knot.
On the other hand,
O. Collin and N. Saveliev~\cite{CollinSaveliev01}
have studied $\SU$-character varieties of knot exteriors 
with trace conditions of meridians from a viewpoint of gauge theory with cyclic 
group actions. They considered finite cyclic branched covers 
over integral homology $3$--spheres with branch set a knot
and define a topological invariant, called the
equivariant Casson invariant, for integral homology $3$--spheres with
finite cyclic group actions. Then it turned out that the
equivariant Casson invariant can be identified with the equivariant knot signatures.
Collin--Saveliev's approach implies that 
the $\SU$-character variety of a knot exterior with
trace conditions of meridians can be related to that of a finite cyclic branched cover.

In this perspective, for a knot $K$ in $\Sigma$, we construct a map $\Phi$ from
$\SL$-representations of $\pi_1(\knotexterior)$ with trace free along meridians
to those of the fundamental group of the $2$--fold branched cover 
$\branchedcoverZHS{2}$ over $\Sigma$ branched along $K$. 
The domain of this map $\Phi$ contains
{\it metabelian representations} of the knot group
$\pi_1(\knotexterior)$ and $\Phi$ sends the metabelian
representations to abelian representations of
$\pi_1(\branchedcoverZHS{2})$ (refer to
Proposition~\ref{prop:correspond_rep}). 
This map $\Phi$ naturally induces a map $\hPhi$ between characters of $\pi_1(\knotexterior)$ and $\pi_1(\branchedcoverZHS{2})$.

The domain of $\hPhi$ is the subset with trace free along meridians, 
denoted by $S_0(\knotexterior)$.
Such a subset of $\SL$-characters with the trace of meridians fixed is
called a {\it slice} since it is a level set of the
function given by the evaluation of trace along meridians at the fixed values. 
In Theorem~\ref{thm:geom_S0}, Propositions~\ref{prop:one_to_one} and 
\ref{prop:two_to_one}, we will see some interesting properties of $\hPhi$ and 
$S_0(\knotexterior)$ for 3--sphere $S^3$. 
In our description of $\hPhi$, the characters of metabelian representations 
have the significant feature and role. 
They form the fixed point set of an involution on $S_0(\knotexterior)$ 
(refer to Proposition~\ref{prop:fixed_point_set})
and they are sent to the characters of abelian representation of 
$\pi_1(\branchedcoverZHS{2})$ by $\hPhi$ one-to-one.
We will also see that $\hPhi$ is surjective for all $2$-bridge knots 
and all pretzel knots of type $(p, q, r)$ and then
$S_0(\knotexterior)$ is the $2$--fold branched cover over the character variety of $\branchedcoverZHS{2}$ via $\hPhi$.

The above framework for a $2$--fold branched cover 
$\branchedcoverZHS{2}$ are naturally generalized to the case 
of an $n$--fold cyclic branched cover $\branchedcoverZHS{n}$. 
Namely, we can also define a map from the subset 
$R_{2 \cos (\pi k/n)}(\knotexterior)$ of $\SL$-representations of $\pi_1(\knotexterior)$ 
with trace $2 \cos (\pi k/n)$ along meridians 
to $\SL$-representations $R(\branchedcoverZHS{n})$ of $\pi_1(\branchedcoverZHS{n})$.
We can descend it to a map from 
the slice $S_{2 \cos (\pi k/n)}(\knotexterior)$ of characters,
associated with $R_{2 \cos (\pi k/n)}(\knotexterior)$,
to characters of $\pi_1(\branchedcoverZHS{n})$. 
This extension of framework is another main result in this article.

\paragraph{Organization}
In Section~\ref{section:preliminaries}, we review some notions about
representations and the characters. Then we define notations used
throughout this article. Section~\ref{section:Lin_presentation} gives
a review of Lin presentation of a knot group and study its properties.
In Section~\ref{section:char_metabelian}, we discuss the characters of
metabelian representations and see  
important properties of {\it binary dihedral} representations in metabelian ones.
In Section~$\ref{section:map_rep_spaces}$, we derive how to make an 
$\SL$-representation of $\pi_1(\branchedcoverZHS{2})$ from that of 
$\pi_1(\knotexterior)$. In Section~\ref{section:slice_S0}, we focus on the
structure of the slice $S_0(\knotexterior)$ with trace free along
meridians for 3--sphere $S^3$ via the correspondence to the characters of $\pi_1(\branchedcoverZHS{2})$. 
In the final section, we show several applications including 
the surjectivity of the map $\hatPhiII$ 
for two--bridge knots and pretzel knots of type $(p,q,r)$.

\section{Preliminaries}\label{section:preliminaries}
\subsection{General notations}\label{subsec:general_notations}
We use the symbol $\Sigma$ to denote an integral homology
$3$--sphere and $K$ to denote a knot in $\Sigma$.  Then
$\knotexterior$ stands for the knot exterior, i.e., it is obtained
by removing an open tubular neighbourhood of $K$ from $\Sigma$.  Let
us denote by $\branchedcoverZHS{n}$ the $n$--fold cyclic branched
cover of $\Sigma$ along the branched set $K$ and by $C_n$ the
$n$--fold cyclic cover of $\knotexterior$.  The following diagram
expresses the relationship among these spaces,
$$
\xymatrix{
C_n \ar[r] \ar[d] & \branchedcoverZHS{n} \ar[d] \\
\knotexterior \ar[r] & \Sigma.
}
$$
The right arrows are inclusions and the down arrows are projections.
When we consider several fundamental groups $\pi_1(\Sigma)$,
$\pi_1(\knotexterior)$, $\pi_1(C_n)$ and
$\pi_1(\branchedcoverZHS{n})$, we assume that the base points of
$\Sigma$ and $\knotexterior$ are given by the same point $b$ which
lives in the boundary torus of $\knotexterior$ and those of $C_n$
and $\branchedcoverZHS{n}$ are given by a lift $\hat{b}$ of $b$.  We
will abbreviate $\pi_1(\Sigma, b)$ to $\pi_1(\Sigma)$ and so on.  We
denote by $\mu$ a meridian of a knot $K$, which is the element in
$\pi_1(\knotexterior)$ represented by a simple closed curve in the
boundary torus.

\subsection{Review on the $\SL$-character variety}
\label{subsection:CV}
We briefly review the $\SL$-representation space and the character
variety.  We consider $\SL$-representations of the fundamental group
of a compact manifold $M$, i.e., homomorphisms from $\pi_1(M)$ into
$\SL$.  We let $$R(M) = \mathrm{Hom}(\pi_1(M); \SL)$$ denote the space
of $\SL$-representations for $M$.  We can see that $R(M)$ is an affine
algebraic set from a presentation of $\pi_1(M)$, by viewing $R(M)$ as
solutions to polynomial equations in a product of copies of $\SL$ for
each generator in $\pi_1(M)$.

A representation $\rho \co \pi_1(M) \to \SL$ is called \emph{abelian} if
the image $\rho(\pi_1(M))$ is an abelian subgroup in $\SL$.  We say
that $\rho$ is {\it metabelian} if the commutator subgroup $[\pi_1(M),
\pi_1(M)]$ is sent to an abelian subgroup in $\SL$ by $\rho$.  Of
course, all abelian representations are metabelian.
It is well--known that there exist only the following two maximal
abelian subgroups, up to conjugation, in $\SL$:
\[
  {\rm Hyp} := 
    \left\{ \left.  
      \begin{pmatrix}
        \lambda & 0 \\
        0 & \lambda^{-1}
      \end{pmatrix}
      \,\right|\, 
      \lambda \in \C^{*} :=\C \setminus \{0\}
    \right\},\quad
  {\rm Para} := 
    \left\{ \left.  
      \pm 
      \begin{pmatrix}
        1 & \omega \\
        0 & 1
      \end{pmatrix}
      \,\right|\, 
      \omega \in \C 
    \right\}.
\]
Here ${\rm Hyp}$ means hyperbolic elements in $\SL$ and ${\rm Para}$
means parabolic elements in $\SL$.  
Hence if $\rho$ is abelian
(resp. metabelian), the image (resp. the image of the commutator subgroup 
$[\pi_1(M), \pi_1(M)]$) can be contained in ${\rm Hyp}$ or 
${\rm Para}$ by taking conjugation.

An $\SL$-representation $\rho$ is called \emph{reducible} if there
exists an invariant line $L \subset \C^2$ such that $\rho(g)(L)
\subset L$, for all $g \in \pi_1(M)$.  Namely, there exists an element
$A$ of $\SL$ such that $A \rho(g) A^{-1}$ is an upper triangular
matrix for any $g\in \pi_1(M)$.  Of course, any abelian representation
is reducible (while the converse is false in general).  Moreover by
\cite[Lemma 1.2.1]{CS:1983}, all reducible representations send the
commutator subgroup $[\pi_1(M), \pi_1(M)]$ into the maximal abelian
subgroup ${\rm Para}$, up to conjugate. Hence reducible
representations are metabelian.  A representation is called 
{\it irreducible} if it is not reducible.

The group $\SL$ acts on the representation space $R(M)$ by
conjugation, however the naive quotient $R(M)/\SL$ is not Hausdorff in
general.
To construct an appropriate quotient, we take the affine
algebraic set determined by the condition that its coordinate ring is
isomorphic to the ring of invariant regular functions
$\C[R(M)]^{\PSL}$.  By M. Culler and P. Shalen~\cite{CS:1983}, we can
endow the set of characters of representations with such affine
algebraic structure. Here the character associated to $\rho \in R(M)$
is the map $\chi_\rho \co \pi_1(M) \to \C$, defined by $\chi_\rho(g) =
\tr \rho(g)$ where $\tr$ denotes the trace of matrices.
In this sense, the set of characters of $R(M)$ is
called the character variety of $M$ and denoted by $X(M)$.  

Let ${R}^\mathrm{irr}(M)$ denote the subset of irreducible
representations of $\pi_1(M)$, and $X^\mathrm{irr}(M)$ denote its
image under the map $t \co {R}(M) \to X(M)$, $t(\rho)=\chi_\rho$.  Note
that two irreducible representations of $\pi_1(M)$ in $\SL$ with the
same character are conjugate by an element of $\SL$,
see~\cite[Proposition 1.5.2]{CS:1983}.
Similarly, we write $X^{\mathrm{red}}(M)$ for the image of the set
$R^{\mathrm{red}}(M)$ of reducible representations under $t$.  We also
use $R^{\mathrm{ab}}(M)$ (resp. $R^{\mathrm{meta}}(M)$) for the set of
abelian (resp. metabelian) representations and $X^{\mathrm{ab}}(M)$
(resp. $X^{\mathrm{meta}}(M)$) for the image by $t$.  
We refer the character of an irreducible representation to 
an {\it irreducible character} and 
similarly for reducible, abelian, metabelian character and so on. 

\section{Free Seifert surfaces and Lin presentations}
\label{section:Lin_presentation}
X.-S. Lin has introduced a new method to represent generators and
relations of a knot group by using a free Seifert surface and the
induced Heegaard splitting of $S^3$.  We call this method {\it Lin
  presentation}.  
His approach also induces presentations of the fundamental groups of
covering spaces and allows us to investigate
character varieties of cyclic covers over a knot exterior.
We will use these presentations as a main tool for our study in the rest
of this article.  In subsection~\ref{subsection:def_Lin_presentation},
we review free Seifert surfaces and Lin presentations. In
subsections~\ref{subsection:pres_piI_Cn} and
\ref{subsection:pres_piI_Bn}, we show how a Lin presentation of
$\pi_1(\knotexterior)$ determines presentations of the fundamental
groups $\pi_1(C_n)$ and $\pi_1(\branchedcoverZHS{n})$ for the
$n$--fold cyclic and branched covers.

\subsection{Lin presentation for an integral homology sphere}
\label{subsection:def_Lin_presentation}
Lin presentation was given for a knot in $S^3$, however, his
construction can be extended for a knot in an integral homology
$3$--sphere $\Sigma$.  To explain this extension, we review his construction.

We start with the definition of a free Seifert surface and existence
of such a Seifert surface in $\Sigma$.
\begin{definition}
  A Seifert surface $S$ of a knot $K$ is called free 
  if $\Sigma = N(S) \cup \overline{\Sigma \setminus N(S)}$ is a Heegaard splitting 
  of $\Sigma$.  Here $N(S)$ is a closed
  tubular neighborhood of $S$ in $\Sigma$.
\end{definition}
\begin{lemma}\label{lemma:preferred_Heegaard_splitting}
  Let $K$ be a knot in $\Sigma$.  Then there exists an embedded
  surface $S$ in $\Sigma$ such that $S \times [-1, 1] \cup
  \overline{\Sigma \setminus S \times [-1, 1]}$ is a Heegaard
  splitting of $\Sigma$ satisfying $K = \partial S$.
\end{lemma}
For more information about the existence of a free Seifert surface and
the induced Heegaard splitting of $\Sigma$, for example,
see~\cite[Lemma~17.2]{saveliev99:_lectur_topol_of_manif}.

A presentation of $\pi_1(\knotexterior)$ is obtained from the
Heegaard splitting associated to a free Seifert surface.

\begin{lemma}[Lemma 2.1 in \cite{Lin01}]\label{lemma:Lin_presentation}
  Assume that $S$ is a free Seifert surface for $K$ in $\Sigma$.  Let
  $\Sigma = H_1 \cup H_2$ be the Heegaard splitting associated to $S$. 
  For a basis $x_1, \ldots, x_{2g}$ of the free group $\pi_1(H_2)$, 
  we have the following presentation of
  $\pi_1(\knotexterior)$:
  \begin{equation}\label{eqn:Lin_equation}
    \pi_1(\knotexterior)=
    \langle 
    x_1, \ldots, x_{2g}, \mu \,|\, \mu\alpha_i\mu^{-1} = \beta_i, i=1, \ldots, 2g
    \rangle,
  \end{equation}
  where $\alpha_i$, $\beta_i$ are words in $x_1, \ldots, x_{2g}$,
  determined by the spine of $S$, and $g$ is the genus of $S$.
\end{lemma}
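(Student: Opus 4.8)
The plan is to derive the presentation of $\pi_1(\Sigma_K)$ directly from the Heegaard splitting $\Sigma = H_1 \cup_F H_2$ via the van Kampen theorem, carefully tracking how removing the knot $K$ modifies the picture. The key geometric input is that $H_1 = S \times [-1,1]$ is a neighborhood of the Seifert surface and $K = \partial S$ sits on the Heegaard surface $F$ as a separating curve, as guaranteed by Lemma~\ref{lemma:preferred_Heegaard_splitting}. First I would set up notation: since $H_2$ is a handlebody, $\pi_1(H_2)$ is free on generators $x_1, \ldots, x_{2g}$, and $\pi_1(F)$ is free on $2g$ generators which we may push into $H_2$. The meridian $\mu$ of $K$ should be introduced as the extra generator that appears precisely because we delete a neighborhood of $K$ from $\Sigma$.

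The central idea is to compute $\pi_1(\Sigma_K)$ rather than $\pi_1(\Sigma)$. The spine of the Seifert surface $S$ carries the free group $\pi_1(S)$, which maps into $\pi_1(H_2)$ in two distinct ways corresponding to pushing a loop on $S$ off to the positive side $S \times \{+1\}$ versus the negative side $S \times \{-1\}$ of the product structure on $H_1$. I would let $\alpha_i$ and $\beta_i$ denote the words in $x_1, \ldots, x_{2g}$ representing these two pushoffs of the $i$-th spine generator. The meridian $\mu$ conjugates one pushoff into the other: geometrically, traveling around the meridian of $K$ interchanges the two sides of the surface, which is exactly the content of the relation $\mu \alpha_i \mu^{-1} = \beta_i$. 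Thus the plan is to show that gluing $H_1 \setminus N(K)$ to $H_2$ along the punctured surface $F \setminus N(K)$ produces, via van Kampen, exactly the presentation in equation~\eqref{eqn:Lin_equation}.

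More concretely, I would argue that $\Sigma_K$ deformation retracts onto the union of $H_2$ with a collar carrying the mapping-torus-like structure coming from $H_1 \setminus N(K)$. Since $K = \partial S$ is separating on $F$ and $H_1 = S \times [-1,1]$, the complement of $K$ in the gluing locus is an annular thickening of $S$, and the fundamental group contribution of $H_1 \setminus N(K)$ is the free group on the spine together with the meridional loop $\mu$. The van Kampen amalgamation over $F \setminus N(K)$ then forces each spine generator, read through the positive and negative pushoffs, to satisfy $\mu \alpha_i \mu^{-1} = \beta_i$, while $\pi_1(H_2)$ contributes the free generators $x_1, \ldots, x_{2g}$ with no further relations (the handlebody is aspherical with free $\pi_1$). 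Collecting these yields precisely the desired presentation.

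The main obstacle will be the careful bookkeeping of the pushoff maps and verifying that the meridian $\mu$ conjugates the positive pushoff $\alpha_i$ to the negative pushoff $\beta_i$ with the correct orientation conventions, rather than the reverse or with extra meridional factors. In particular I expect the delicate point to be confirming that no additional relations survive the van Kampen computation beyond the $2g$ displayed ones, which amounts to checking that the inclusion of $F \setminus N(K)$ into $H_2$ is $\pi_1$-injective and that the amalgamating subgroup is free of the expected rank. Since this lemma is attributed to Lin (Lemma~2.1 in~\cite{Lin01}) and the extension from $S^3$ to a general integral homology sphere $\Sigma$ only uses that $\Sigma$ admits such a Heegaard splitting, I would primarily need to verify that Lin's original argument nowhere uses properties of $S^3$ beyond those supplied by Lemma~\ref{lemma:preferred_Heegaard_splitting}.
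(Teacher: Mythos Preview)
Your proposal is essentially the same as the paper's argument: both use van Kampen on the decomposition coming from the Heegaard splitting, identify $\alpha_i$ and $\beta_i$ as the positive and negative pushoffs of the spine circles of $S$ into $\pi_1(H_2)$, and obtain the relations $\mu\alpha_i\mu^{-1}=\beta_i$ from the meridian linking the two sides. The only technical refinement in the paper you may want to adopt is that, rather than gluing along $F\setminus N(K)$ (which is disconnected, since $K$ separates $F$), the paper first pushes $K$ slightly into $H_1$ to a parallel copy $K'\subset S\times\{0\}$ and decomposes $\Sigma_K=(H_1\setminus\mathrm{int}\,N(K'))\cup H_2$; this keeps the amalgamating surface equal to all of $F$ and makes the van Kampen bookkeeping cleaner, exactly addressing the ``no additional relations'' concern you flagged.
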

We call the presentation~$(\ref{eqn:Lin_equation})$ a Lin presentation of $\pi_1(\knotexterior)$.
\begin{remark}
  Note that every $x_i$ is contained in the commutator subgroup of
  $\pi_1(\knotexterior)$.
\end{remark}

\begin{proof}[Proof of Lemma~\ref{lemma:Lin_presentation}]
  This can be shown by van Kampen's theorem.  We begin by a
  decomposition of $\knotexterior$ associated with a Heegaard
  splitting of $\Sigma$ by removing a solid torus inside single
  handlebody, which is isotopic to an open tubular neighbourhood of
  $K$.  As in Lemma~\ref{lemma:preferred_Heegaard_splitting}, the
  integral homology sphere $\Sigma$ is decomposed into two
  handlebodies $H_1$ and $H_2$ where $H_1 = S \times [-1, 1]$.  By
  pushing $K$ into $H_1$ slightly, we have a simple closed curve $K'$
  on $S \times \{0\}$, which is parallel to $\partial S = K$.  We
  identify $E_K$ with $E_{K'}$ and apply van Kampen's theorem to the
  decomposition $E_K = (H_1 \setminus \interior N(K)) \cup H_2$.  Then
  van Kampen's theorem says that the knot group
  \[ \pi_1(\knotexterior) \simeq \pi_1(H_1\setminus \interior N(K))
  \mathop{\ast}_{\pi_1(\partial H_1)}\pi_1(H_2) .\] and by homotopy
  equivalence and van Kampen's theorem again, we have
  \begin{align*}
    \pi_1(H_1\setminus \interior N(K)) 
    &\simeq \pi_1(\partial N(K)) \mathop{\ast}_{\pi_1(\hbox{\scriptsize longitude})}\pi_1(S) \\
    &= \langle \mu, a_1, \ldots, a_{2g} \,|\, [ \mu, \prod_{i=1}^{g}[a_{2i-1},a_{2i}] ] = 1 \rangle
  \end{align*}
  where $\mu$ is a meridian and $a_1, \ldots,
  a_{2g}$ are circles on $S$ and give a spine of $S$, which is a
  deformation retract of $S$ and $g$ is the genus of $S$. 

  Hence $\pi_1(\knotexterior)$ is generated by $x_1, \ldots,
  x_{2g}$, $a_1, \ldots, a_{2g}$ and $\mu$. But since every $a_i$ is
  written by a word in $x_1, \ldots, x_{2g}$ in $\pi_1(E_K)$ by the
  homeomorphism between $\partial H_1$ and $\partial H_2$, we can
  reduce the generators to $x_1, \ldots, x_{2g}, \mu$.  The relations
  of $\pi_1(\knotexterior)$ are given by disks in $H_1 \setminus
  \interior N(K)$, whose boundaries belong to $\partial
  (H_1 \setminus \interior N(K)) \cup S$.  We can assume that such a disk $D^2$ in $H_1
  \setminus \interior N(K)$ intersects with $S$
  transversally.  Then the homotopy class of $\partial D^2$ is given
  by a word in $\mu a_i^+ \mu^{-1} (a_i^{-})^{-1}$ ($i=1, \ldots, 2g$)
  where $a_i^{\pm}$ is $a_i \times \{\pm 1\}$ respectively.  The relations of
  $\pi_1(E_K)$ are generated by $\mu a_i^+ \mu^{-1} (a_i^{-})^{-1}$
  ($i=1, \ldots, 2g$).  So, by rewriting $a_i^{\pm}$ as words
  $\alpha_i$ and $\beta_i$ in $x_1, \ldots, x_{2g}$, we obtain the
  presentation in Lemma~\ref{lemma:Lin_presentation}. 
\end{proof}

\begin{figure}[!ht]
\begin{center}
  \includegraphics[scale=.3]{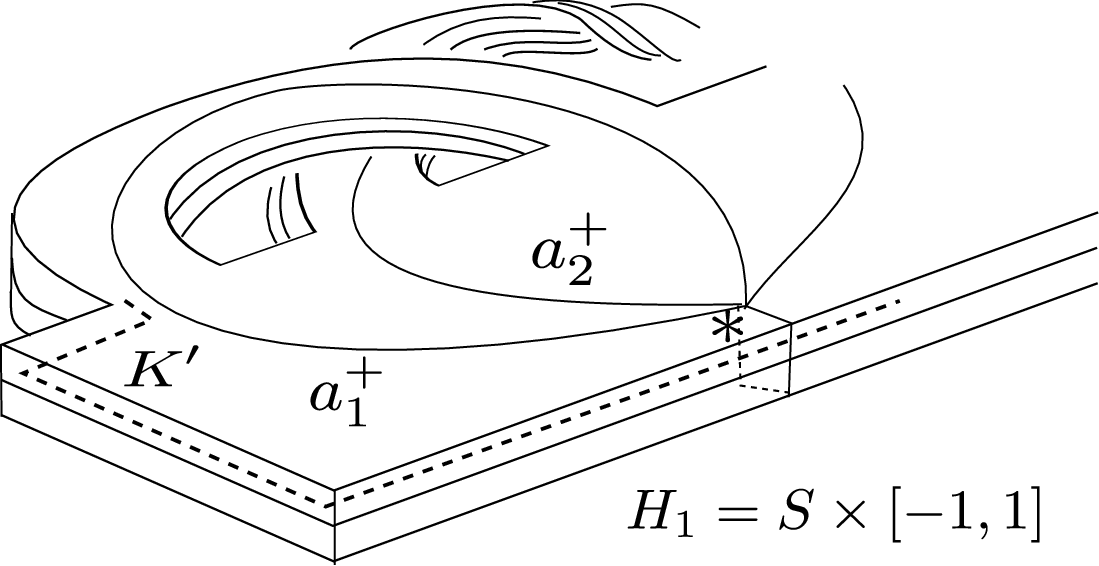}
\end{center}
\caption{Seifert surface}\label{fig:Seifert}
\end{figure}

Let us denote by $v_{i,j}$ the exponent sum of $x_j$ in $\alpha_i$ and
$u_{i,j}$ the exponent sum of $x_i$ in $\beta_i$.  We set two $(2g
\times 2g)$-matrices $V := (v_{i,j})$ and $U := (u_{i,j})$ with integer
entries.

For a knot in $S^3$, Lin~\cite{Lin01} has introduced special free Seifert surface,
called {\it regular}.  A free Seifert surface $S$
is called regular if it has a spine where embedding in $S^3$ induced
by $S$ is isotopic to the standard embedding.  By setting appropriate
orientations on $a_i$ and $x_j$, we obtain the following standard
properties of $V$ and $U$:
\begin{enumerate}
\item $U$ is so-called the Seifert matrix $Q=(Q_{i,j})$, $Q_{i, j} =
  lk(a_i, a_j^{+})$ for $S$;
\item $V = \trans{U}$, where $\trans{U}$ is the transpose matrix of
  $U$.
\end{enumerate}
Note that Lin used the matrix $(lk(a_i^+, a_j))_{i, j}$ as
the Seifert matrix, which is different from ours $(lk(a_i, a_j^{+}))_{i,j}$.  
This gives rise to differences between
the convention in~\cite{Lin01} of matrices $U$ and $V$ and ours.

Unfortunately, regular Seifert surfaces are defined only for knots in
$S^3$.  However, $V$ and $U$ are expressed by using the Seifert matrix
for a general free Seifert surface of a knot in an integral homology
sphere $\Sigma$.  To describe $V$ and $U$ for a free Seifert surface
in $\Sigma$, we need one more $(2g \times 2g)$-matrix $T$ which
represents a boundary operator in the Morse complex $\oplus_{i=0}^3
C^{\hbox{\scriptsize Morse}}_i(\Sigma;\Z)$, where
$C^{\hbox{\scriptsize Morse}}_i(\Sigma;\Z)$ is generated by critical
points with index $i$ of a Morse function associated to the Heegaard
splitting as follows.

Let $S$ be a free Seifert surface for a knot $K$ in $\Sigma$.  Set
$\Sigma = H_1 \cup H_2$ as a Heegaard splitting associated to $S$.
The set of cores of $1$-handle in $H_1$ are given by $\{a_1, \ldots, a_{2g}\}$ 
in the spine $W_{2g}$.  We denote by $D_i$ a meridian disk for $a_i$. 
The cores of $2$-handles represent the  generators
$\{x_1, \ldots, x_{2g}\}$ in $\pi_1(H_2)$. We denote by $D'_i$ a
meridian disk for $x_i$.

It is known that there exists a Morse function $f \co Y \to [0, 3]$ {\it compatible}
with the Heegaard splitting (see for
instance~\cite{ozsvath04:_holom_disks_and_topol_invar}),  i.e.,
$f$ is a self--indexing Morse
function on $\Sigma$ with one minimum and one maximum and $f$ induces
the Heegaard decomposition with surface 
$S = f^{-1}(3/2)$, $H_1 = f^{-1}([0, 3/2])$, $H_2 = f^{-1}([3/2, 3])$. 
The attaching circles $\partial D_i$ and $\partial D'_i$ are the 
intersections of $S$ with the ascending and descending gradient flows from 
the index one and two critical points respectively.

The intersection point between the meridian disk $D_i$ and the core
$a_i$ gives a critical point with index one.  We denote by $q_i$ this
critical point.  The intersection $D'_i$ with $x_i$ gives a critical
point with index two. We denote it by $p_i$.  Each $2$-handle is
attached to $H_1$ along the image $\partial D'_i$ in $\partial H_1 = S$.
We set an integer $t_{k, j}$ as the intersection number $\partial D_j$
with $\partial D'_k$ in $\partial H_1$.  The matrix $(t_{k, j})$ gives
the boundary operator from 
$C^{\hbox{\scriptsize Morse}}_2(\Sigma;\Z)$ to 
$C^{\hbox{\scriptsize Morse}}_1(\Sigma ;\Z)$.  
The correspondence between two bases $(p_1, \ldots, p_{2g})$ and 
$(q_1, \ldots, q_{2g})$ is expressed as
$$
\partial (p_1, \ldots, p_{2g}) = (q_1, \ldots, q_{2g})\, T, 
$$
where $T$ is the $(2g \times 2g)$-matrix $(t_{k, j})$.

\begin{figure}[!ht]
\begin{center}
  \includegraphics[scale=.5]{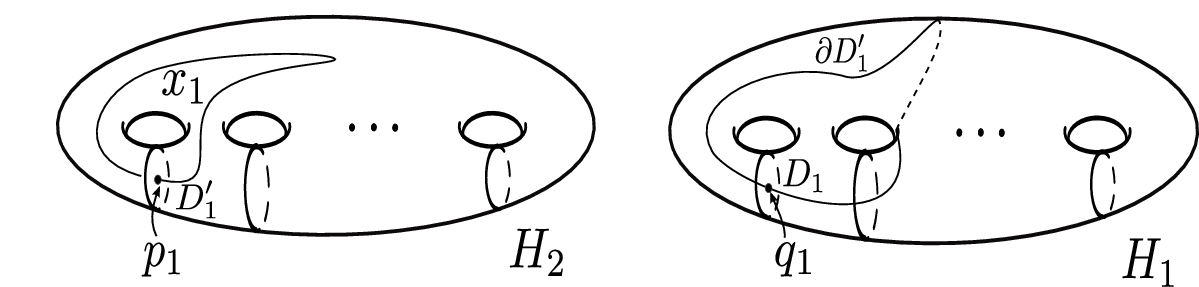}
\end{center}
\caption{Heegaard splitting}\label{fig:Heegaard_splitting}
\end{figure}

\begin{remark}
  Since $\Sigma$ is an integral homology $3$--sphere, the
  representation matrix $T$ is invertible.  In particular the
  determinant $\det(T)$ is equal to $\pm 1$.
\end{remark}

\begin{proposition}\label{prop:general_U_V}
  We suppose that $S$ is a free Seifert surface of a knot $K$.  Let
  $V$ and $U$ be the matrices defined by the relations $\alpha_i$ and
  $\beta_i$ in Lin presentation $(\ref{eqn:Lin_equation})$ of the knot
  group.  Then the matrices $V$ and $U$ are expressed as
  \[V = \trans{Q} T, \quad U = Q T,\] 
  where $Q$ is the Seifert matrix
  for $S$ and $T$ is the matrix corresponding to the boundary operator
  from $C^{\hbox{\scriptsize Morse}}_2(\Sigma;\Z)$ to
  $C^{\hbox{\scriptsize Morse}}_1(\Sigma;\Z)$.
\end{proposition}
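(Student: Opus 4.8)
The plan is to pass from the word-length invariants $v_{i,j},u_{i,j}$ to homological intersection numbers on the Heegaard surface $F=\partial H_1=\partial H_2$, and then to recognize these intersection numbers as Seifert linking numbers weighted by the gluing matrix $T$. First I would abelianize Lin's presentation. Since $\pi_1(H_2)$ is free on $x_1,\dots,x_{2g}$ with meridian disks $D'_1,\dots,D'_{2g}$, the class of a loop $\gamma\subset F$ in $H_1(H_2)\cong\Z^{2g}$ is $\sum_j(\gamma\cdot\partial D'_j)[x_j]$, where $\gamma\cdot\partial D'_j$ denotes the intersection pairing on $F$. Applying this to $a_i^+$ (whose class is $\alpha_i$) and to $a_i^-$ (whose class is $\beta_i$) gives, up to a fixed overall sign,
\[
v_{i,j}=a_i^+\cdot\partial D'_j,\qquad u_{i,j}=a_i^-\cdot\partial D'_j .
\]
Thus everything is reduced to computing two families of surface intersection numbers on $F$.

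Next I would relate these surface intersections to the Seifert form through the product structure of $H_1=S\times[-1,1]$. The geometric meaning of $T$ is that, in $H_1(H_1)\cong\Z^{2g}$ with basis the cores $[a_1],\dots,[a_{2g}]$, the attaching circle $\partial D'_j$ of the $j$-th $2$-handle represents $\sum_\ell T_{\ell,j}[a_\ell]$ (the precise index placement being fixed by the orientation conventions of the last paragraph); concretely this coordinate is read off by intersecting $\partial D'_j$ with the belt circles $\partial D_\ell$ of the $1$-handles, since the $a_m^\pm$ are dual to the $\partial D_\ell$ and $t_{k,j}=\partial D_j\cdot\partial D'_k$. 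I would then isotope $\partial D'_j$ inside $H_1$ down to a curve on the zero level $S\times\{0\}$ representing $\sum_\ell T_{\ell,j}a_\ell$. During this collar isotopy the algebraic intersection of $a_i^-$ (sitting on $S\times\{-1\}$) with the moving curve is computed by the linking across the collar of $a_i$ at level $-1$ with $a_\ell$ at level $0$, namely $lk(a_i,a_\ell^+)=Q_{i,\ell}$; summing with the weights $T_{\ell,j}$ yields $u_{i,j}=\sum_\ell Q_{i,\ell}T_{\ell,j}$, i.e.\ $U=QT$. Running the identical argument with the opposite push-off $a_i^+$ on $S\times\{+1\}$ replaces $lk(a_i,a_\ell^+)$ by $lk(a_i,a_\ell^-)=Q_{\ell,i}=(\trans Q)_{i,\ell}$, and gives $V=\trans Q T$.

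As a consistency check and a shortcut I would record the difference directly: since $a_i^+-a_i^-$ bounds the annulus $a_i\times[-1,1]$ in $H_1$, pairing this class with $\partial D'_j$ through the intersection form gives $v_{i,j}-u_{i,j}=\bigl((\trans Q-Q)T\bigr)_{i,j}=-(JT)_{i,j}$, where $J$ is the intersection form of $S$ and, with our orientations, $Q-\trans Q=J$. Hence it is in fact enough to prove one of the two identities, say $U=QT$, and the other follows from $V-U=-JT$ together with $Q-J=\trans Q$.

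I expect the main obstacle to be the second paragraph: converting the purely two-dimensional intersection number $a_i^\mp\cdot\partial D'_j$ on $F$ into a sum of three-dimensional Seifert linking numbers. Making the collar isotopy precise—so that each passage of the isotoped curve through the level $S\times\{-1\}$ is counted by the correct Seifert linking number, and so that the two push-off directions produce $Q$ versus $\trans Q$—requires fixing compatible orientations of $S$, of $F$, of the $a_i$ and $x_j$, and of the intersection numbers defining $T$. The delicate bookkeeping of these signs, equivalently the standard identification of $T$ as the change-of-basis matrix between the $H_1$-longitudes $a_\ell$ and the $H_2$-meridians $x_j$ in the Heegaard splitting, is exactly what pins down the transposes in $V=\trans Q T$ and $U=QT$. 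The invertibility $\det T=\pm1$ noted above (because $\Sigma$ is an integral homology sphere) then makes these relations consistent with recovering the Seifert matrix as $Q=U T^{-1}$.
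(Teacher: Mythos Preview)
Your approach is essentially the same as the paper's: identify $v_{i,j}$ and $u_{i,j}$ as intersection numbers $a_i^{\pm}\cdot\partial D'_j$ on the Heegaard surface, use that $\partial D'_j$ is homologous in $H_1$ to $\sum_k t_{k,j}a_k$, and convert the resulting pairings to Seifert linking numbers $lk(a_i^{\pm},a_k)$, giving $V=\trans{Q}T$ and $U=QT$. The paper carries this out in three lines; your extra material (the collar-isotopy picture, the $V-U=-JT$ consistency check, and the discussion of sign conventions) is correct elaboration but not a different argument. One minor caveat: ``isotope $\partial D'_j$ to a curve on $S\times\{0\}$'' is more than you need and may not literally be possible---homology in $H_1$ suffices for the linking-number computation, which is exactly how the paper phrases it.
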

\begin{proof}
  From the definition, the integer $v_{i, j}$ is given by the
  intersection number of $a_i^+$ with $\partial D'_j$ in $\partial H_1$.
  Since $\partial D'_j$ is homologue to $\sum_{k=1}^{2g} t_{k, j}
  a_k$, by using linking number, the entry $v_{i, j}$ in $V$ is
  expressed as
  \begin{equation}\label{eqn:entry_in_V}
    v_{i, j} = lk(a_i^+,\, \sum_{k=1}^{2g} t_{k, j}a_k),
  \end{equation}
  where $t_{k, j}$ is the entries in the matrix $T$.  The right hand
  side of $(\ref{eqn:entry_in_V})$ turns into
  \begin{align*} 
    lk(a_i^+,\, \sum_{k=1}^{2g} t_{k, j} a_k) 
    &= \sum_{k=1}^{2g} t_{k, j} lk (a_i^+,\, a_k) \\
    &= \sum_{k=1}^{2g} t_{k, j} Q_{k, i}.
  \end{align*}
  This means that $V=\trans{Q}T$. Similarly we can prove that
  $U=QT$.
\end{proof}

\begin{remark}\label{remark:Alexander_poly}
  The Alexander polynomial $\Delta_K(t)$ is given by $\det (t\*U-V)$.
\end{remark}
\begin{remark}\label{remark:matrix_T_identity}
  If $K$ is a knot in $S^3$ and $S$ is regular, then the matrix $T$ is
  the identity matrix.
\end{remark}

\subsection{The induced presentation for the fundamental group of the $n$--fold 
cyclic cover}\label{subsection:pres_piI_Cn}
The main results in this subsection are Lemmas~\ref{lemma:presentation_for_C_n} 
and~\ref{lemma:presentation_homology_C_n}. 
Lemma~\ref{lemma:presentation_for_C_n} describes a presentation 
for $\pi_1(C_n)$ induced by a choice of 
Lin presentationfor $\pi_1(\knotexterior)$ and Lemma 4 establishes a useful relation 
in the homology $H_1(C_n;\Z)$. 
%We describe a presentation of $\pi_1(C_n)$ induced by a Lin presentation
%of $\pi_1(\knotexterior)$ and establish a useful relation in the
%homology $H_1(C_n;\Z)$.

The $n$--fold cyclic cover $C_n$ over $\knotexterior$ has the
covering transformations, denoted by $\ctransf$.  The covering
transformations form a cyclic group with order $n$.  This action
induces the automorphism $\tau$ of $\pi_1(C_n)$,
\begin{align*}
  \tau \co \pi_1(C_n, \hat{b}) &\to \pi_1(C_n, \hat{b}), \\
  [\ell] & \mapsto [\widetilde{m} \cdot \ctransf(\ell) \cdot
  \widetilde{m}^{-1}],
\end{align*}
where $\hat{b}$ is a lift of the base point $b$ of
$\pi_1(\knotexterior)$, $\ell$ denotes a closed loop in $C_n$ and
$\widetilde{m}$ is a lift of the meridian $\mu$ with the initial point
$\hat{b}$.  Let $\homPiICnEk \co \pi_1(C_n,\hat{b}) \to
\pi_1(\knotexterior,b)$ be the homomorphism induced by the
projection from $C_n$ to $\knotexterior$.  Then we denote by
$\mu_n$ a meridian of $C_n$ such that $\homPiICnEk(\mu_n) = \mu^n$.
Note that the covering transformation does not preserve base point
$\hat{b}$.

\begin{remark}
  It follows from the definition of $\tau$ that
  \begin{enumerate}
  \item for every element $\gamma$ in $\pi_1(C_n)$, the action of
    $\tau^n$ sends $\gamma$ to $\mu_n \,\gamma\, \mu_n^{-1}$ where
    $\tau^k$ denotes $k$-times composition of $\tau$; and
  \item the homomorphism $\tau$ sends $\mu_n$ to itself in
    $\pi_1(C_n)$.
  \end{enumerate}
\end{remark}

Lin presentations are useful to describe the fundamental groups of the
cyclic cover $C_n$ over $\knotexterior$.  Indeed, we can construct
$C_n$ by cutting $\knotexterior$ along a Seifert surface and gluing
$n$ copies. Then every closed loop on the Seifert surface can be
lifted to $C_n$.
\begin{lemma}\label{lemma:presentation_for_C_n}
Given a Lin presentation of $\pi_1(E_K)$ of the form 
  \[
  \pi_1(E_K) = \langle x_1, \ldots, x_{2g}, \mu \,|\, \mu \alpha_i \mu^{-1} = \beta_i, i=1, \ldots,2g \rangle,
  \] 
  then $\pi_1(C_n)$ admits a presentation of the form 
  \begin{equation}\label{eqn:presentation_for_C_n}
    \pi_1(C_n) = 
    \left\langle 
      \begin{array}{c}
        \tilde{x}_1, \ldots, \tilde{x}_{2g}, \\ 
        \tau^j \tilde{x}_1, \ldots, \tau^j \tilde{x}_{2g}, \mu_n 
      \end{array}
      \, \left| \, 
        \begin{array}{c}
          \mu_n \tilde{\alpha}_i^{(0)} \mu_n^{-1} = \tilde{\beta}_i^{(n-1)}, \,
          \tilde{\alpha}_i^{(j)} = \tilde{\beta}_i^{(j-1)}, \\
          1 \leq i \leq 2g,\,
          1 \leq j \leq n-1 
        \end{array}
      \right.
    \right\rangle
  \end{equation}
  where $\tilde{x}_i$ is the lift of $x_i$ to $C_n$ and 
  $\tilde{\alpha}_i^{(j)}$, $\tilde{\beta}_i^{(j)}$ denote the words obtained from $\alpha_i$, $\beta_i$ 
  by replacing $x_1, \ldots, x_{2g}$
  with $\tau^j \tilde{x}_1, \ldots, \tau^j \tilde{x}_{2g}$ for 
  $i=1, \ldots, 2g$ and $j=0, \ldots, n-1$.
\end{lemma}

Note that $x_i$ in $\knotexterior$ does not intersect with the free
Seifert surface.  So we can take a closed loop in $C_n$ as a lift.

\begin{proof}
  The presentation $(\ref{eqn:presentation_for_C_n})$ follows from the
  construction of $C_n$ by cutting $\knotexterior$ along the free
  Seifert surface $S$ and gluing $n$ copies of it along their boundary surfaces 
  in an appropriate manner. First we separate $\Sigma$ into two
  handlebodies $H_1 = S \times [-1, 1]$ and $H_2$ as in
  Lemma~\ref{lemma:preferred_Heegaard_splitting} and take a simple
  closed curve $K'$ on $S \times \{0\}$ parallel to $K$ by pushing $K$
  slightly into $H_1$.  We denote by $S'$ a compact surface obtained
  by shrinking $S$ so that the boundary coincides with $K'$.  Cutting
  $H_1$ along $S'$ and gluing $n$ copies, we have the cyclic cover
  $\wtilde{H}_1$ of $H_1$.  Since the cyclic cover $\wtilde{H}_1$ is
  deformed to the union $T^2$ and $n$ copies of $S'$, the fundamental
  group $\pi_1(\wtilde{H}_1)$ is generated by the homotopy classes of
  all lifts of $a_i$ $(1\leq i \leq 2g)$ and $\mu_n$ where $a_1,
  \ldots, a_{2g}$ are circles in a spine of $S'$.  Next gluing
  $n$-copies of $H_2$ to $\wtilde{H}_1$ as in
  Figure~\ref{fig:decomp_C_n}, we obtain the $n$--fold cyclic cover of
  $\knotexterior$ and the presentation
  $(\ref{eqn:presentation_for_C_n})$ by van Kampen's theorem. 
\end{proof}

\begin{figure}[!ht]
  \begin{center}
    \includegraphics[scale=.6]{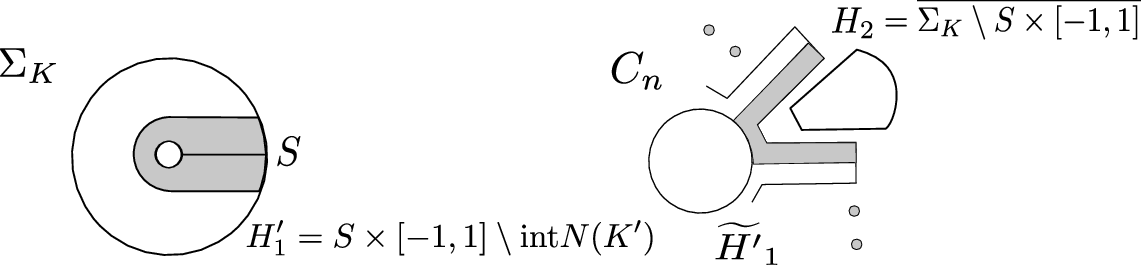}
  \end{center}
  \caption{Decomposition of $C_n$}\label{fig:decomp_C_n}
\end{figure}

The presentation of $\pi_1(C_n)$ in Lemma
\ref{lemma:presentation_for_C_n} induces a presentation of
$H_1(C_n;\Z)$ as follows.  It is known that this presentation of
$H_1(C_n;\Z)$ can be also proved by using a Mayer-Vietoris argument and
the Alexander duality, see~\cite[Chapter 9]{lickorish97:_introd_to_knot_theory}.
\begin{lemma}\label{lemma:presentation_homology_C_n}
  We keep the notations in Proposition~\ref{prop:general_U_V} and
  Lemma \ref{lemma:presentation_for_C_n}.  Let $Z$ be a free abelian
  group generated by $2ng$ elements as follows:
  \[
  Z := \left(\oplus_{i=1}^{2g}\Z [\wtilde{x}_i]\right) \oplus
  \left(\oplus_{i=1}^{2g}\Z [\tau(\wtilde{x}_i)]\right) \oplus \cdots \oplus
  \left(\oplus_{i=1}^{2g}\Z [\tau^{n-1}(\wtilde{x}_i)]\right).
  \]
  Then the presentation of $H_1(C_n;\Z)$ is given by
  \[
  Z \xrightarrow{ 
      \begin{pmatrix}
        A\\
        {\small 0 \cdots 0}
      \end{pmatrix}
     } Z\oplus \Z[\mu_n] \to H_1(C_n;\Z),
  \]
  where $A$ is the $(2ng \times 2ng)$-matrix given by 
  \[
  A= \left(
    \begin{array}{ccccc}
      V & -U &\cdots & 0 & 0  \\
      0 & V &-U &\cdots & 0 \\
      0 & 0 & V & \cdots & 0 \\
      \vdots & \ddots & \ddots& \ddots& \vdots \\
      -U & \cdots & 0 & 0 & V   
    \end{array}
  \right).
  \]
  Moreover if we set 
  $\bm{x}_j = \trans{([\tau^j(\wtilde{x}_1)], \ldots, [\tau^j(\wtilde{x}_{2g})])}$, 
  then we have the following relation in $H_1(C_n;\Z)$:
  \[
  \bm{x}_0 + \cdots + \bm{x}_{n-1} = \bm{0}.
  \]
\end{lemma}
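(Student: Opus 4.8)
\section*{Proof proposal}

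The plan is to read a presentation of $H_1(C_n;\Z)$ off the presentation of $\pi_1(C_n)$ furnished by Lemma~\ref{lemma:presentation_for_C_n}, simply by abelianizing. Take as generators the classes $[\tau_n^j(\wtilde{x}_i)]$ $(1\le i\le 2g,\ 0\le j\le n-1)$ together with $[\mu_n]$, and recall that $v_{i,k}$ (resp. $u_{i,k}$) is the exponent sum of $x_k$ in $\alpha_i$ (resp. $\beta_i$). Since $\wtilde{\alpha}_i^{(j)}$ (resp. $\wtilde{\beta}_i^{(j)}$) is obtained from $\alpha_i$ (resp. $\beta_i$) by substituting $\tau_n^j(\wtilde{x}_k)$ for each $x_k$, its image in $H_1(C_n;\Z)$ is $\sum_{k}v_{i,k}[\tau_n^j(\wtilde{x}_k)]=(V\bm{x}_j)_i$ (resp. $(U\bm{x}_j)_i$), where $\bm{x}_j=\trans{([\tau_n^j(\wtilde{x}_1)],\ldots,[\tau_n^j(\wtilde{x}_{2g})])}$. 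Thus each relator abelianizes to a $\Z$-linear relation among the $\bm{x}_j$ with coefficient matrices $V$ and $U$.

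Next I would abelianize the relators one at a time. Conjugation is trivial in homology, so the relator $\mu_n\wtilde{\alpha}_i^{(0)}\mu_n^{-1}=\wtilde{\beta}_i^{(n-1)}$ becomes $V\bm{x}_0-U\bm{x}_{n-1}=\bm{0}$; in particular $[\mu_n]$ occurs with coefficient $0$ in \emph{every} relation, which is exactly the vanishing bottom block of $\binom{A}{\bm{0}}$. The remaining relators $\wtilde{\alpha}_i^{(j)}=\wtilde{\beta}_i^{(j-1)}$ $(1\le j\le n-1)$ become $V\bm{x}_j-U\bm{x}_{j-1}=\bm{0}$. Listing these $n$ vector relations in the order $j=0,1,\ldots,n-1$ and grouping the generators into blocks by $j$ assembles exactly the block-circulant coefficient matrix $A$ (diagonal blocks $V$ from the $V\bm{x}_j$ terms, off-diagonal blocks $-U$ from the $U\bm{x}_{j-1}$ terms, the single wrap-around block coming from the $\mu_n$-relation). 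Hence $H_1(C_n;\Z)=\coker\binom{A}{\bm{0}}$, in agreement with the Mayer--Vietoris/Alexander-duality computation of \cite[Chapter 9]{lickorish97:_introd_to_knot_theory}.

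For the ``moreover'' statement, I would sum the $n$ abelianized relations $V\bm{x}_j-U\bm{x}_{j-1}=\bm{0}$ over $j=0,\ldots,n-1$ (indices mod $n$). Reindexing the second sum gives $\sum_j U\bm{x}_{j-1}=U(\bm{x}_0+\cdots+\bm{x}_{n-1})$, so the sum collapses to $(V-U)(\bm{x}_0+\cdots+\bm{x}_{n-1})=\bm{0}$ in $H_1(C_n;\Z)$. By Proposition~\ref{prop:general_U_V}, $V-U=(\trans{Q}-Q)T$, where $\trans{Q}-Q$ is (up to sign) the intersection matrix of $S$ in the symplectic basis $a_1,\ldots,a_{2g}$, hence unimodular, while $T$ is unimodular by the remark following its definition. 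Therefore $V-U\in\GL_{2g}(\Z)$, its inverse has integer entries and acts on $H_1(C_n;\Z)^{2g}$, and multiplying the displayed identity by $(V-U)^{-1}$ yields $\bm{x}_0+\cdots+\bm{x}_{n-1}=\bm{0}$.

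The only non-formal point is this last step: one must cancel $V-U$ over $\Z$ even though $H_1(C_n;\Z)$ carries torsion, and this is legitimate precisely because $V-U$ is invertible \emph{integrally}. The key input is recognizing $\trans{Q}-Q$ as the intersection form of the once-punctured surface $S$, which is unimodular; together with $\det T=\pm1$ this forces $\det(V-U)=\pm1$, so $(V-U)^{-1}$ is an honest integer matrix and the cancellation is valid. Everything else is the routine bookkeeping of abelianizing Lin's covering presentation.
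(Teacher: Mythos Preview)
Your proof is correct and follows essentially the same route as the paper: abelianize Lin's presentation of $\pi_1(C_n)$ to obtain the block-circulant matrix $A$, then sum the relations to get $(V-U)(\bm{x}_0+\cdots+\bm{x}_{n-1})=\bm{0}$ and cancel $V-U$ over $\Z$. The only difference is in how the integral invertibility of $V-U$ is justified: the paper invokes Remark~\ref{remark:Alexander_poly} to write $|\det(V-U)|=|\Delta_K(1)|=1$, whereas you argue directly from $V-U=(\trans{Q}-Q)T$ with $\trans{Q}-Q$ the (unimodular) intersection form of $S$ and $T$ unimodular---these are of course equivalent, your version simply unpacking why $\Delta_K(1)=\pm1$.
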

\begin{proof}
  The above presentation is given by the abelianization of 
  the fundamental group as in Lemma~\ref{lemma:presentation_for_C_n}. So we omit
  the details. Here we show the relation $\bm{x}_0 + \cdots +
  \bm{x}_{n-1} = \bm{0}$.  From the presentation of $H_1(C_n;\Z)$, we
  have the following relation:
  \[
  \left\{
    \begin{array}{rrrrcc}
      V\bm{x}_0  &           &              &-U\bm{x}_{n-1} &=& \bm{0} \\
      -U\bm{x}_0 &+V\bm{x}_1 &              & &=& \bm{0} \\
      &           &\vdots        & & & \vdots\\
      &           &-U\bm{x}_{n-2}&+V\bm{x}_{n-1} &=& \bm{0}.
    \end{array} 
  \right.
  \]
  By taking sum on the both side, we have
  \[
  (V - U)(\bm{x}_0 + \cdots + \bm{x}_{n-1}) = \bm{0}.
  \]
  By Proposition~\ref{prop:general_U_V}
  (cf. Remark~\ref{remark:Alexander_poly}) and the fact that the
  Alexander polynomial of a knot at $t=1$ always equals $\pm 1$, we see that
  $\det (V - U) = \Delta_K(1) = \pm 1$, which implies that $\bm{x}_0 +
  \cdots + \bm{x}_{n-1} = \bm{0}$. 
\end{proof}

\subsection{The induced presentation for the fundamental group of 
the $n$--fold branched cover}
\label{subsection:pres_piI_Bn}
The main result in this subsection is Lemma~\ref{lemma:presentation_for_Sigma_n}, 
which describes the presentation for $\pi_1(\branchedcoverZHS{n})$ 
induced by a choice of Lin presentation for $\pi_1(\knotexterior)$. 
%We describe the presentation for $\pi_1(\branchedcoverZHS{n})$ induced
%by a Lin presentation of $\pi_1(\knotexterior)$.  

We denote by $j_*$ the homomorphism from $\pi_1(C_n)$ to
$\pi_1(\branchedcoverZHS{n})$ induced by the inclusion from $C_n$ onto
$\branchedcoverZHS{n}$.  Now we have the following diagrams on the
knot exterior and its covering spaces:
\[
\xymatrix{
  C_n \ar[r]^{j} \ar[d]_{p} & \branchedcoverZHS{n} \ar[d]^{q}\\
  \knotexterior \ar[r]_{i} & \Sigma, } 
\qquad 
\xymatrix{
  \pi_1(C_n) \ar[r]^{j_*} \ar[d]_{p_*} & \pi_1(\branchedcoverZHS{n}) \ar[d]^{q_*}\\
  \pi_1(\knotexterior) \ar[r]_{i_*} & \pi_1(\Sigma).  }
\]

Since $\branchedcoverZHS{n}$ is obtained from $C_n$ by gluing $D^2 \times S^1$ 
along their boundariesand the universality of the amalgamated product, we have
the following diagram:
\begin{equation}\label{diag:Kampen}
  \xymatrix@R=0pt{
    &\pi_1(D^2 \times S^1) \ar[dr]& & \\
    \pi_1(T^2) \ar[ur] \ar[dr]& &\pi_1(D^2 \times S^1) 
    \mathop{\ast}_{\pi_1 (T^2)} \pi_1(C_n) \ar[r]^(0.7){\isomInVanKampen} 
       & \pi_1(\branchedcoverZHS{n}). \\
    &\pi_1(C_n) \ar[ur] & &
  }
\end{equation}
Van Kampen's theorem implies that the above map $\isomInVanKampen$ is an isomorphism. The
next result is a basic fact in group theory.
\begin{lemma}
Suppose $\varphi \co H \to G_1$ is an epimorphism and the following diagram of groups is commutative:
  \[
  \xymatrix@R=0pt{
    &G_1 \ar[dr]^{i_{G_1}}& \\
    H \ar[ur]^{\varphi} \ar[dr]_{\psi}& &G_1 *_H G_2.\\
    &G_2 \ar[ur]_{i_{G_2}}& }
  \]
  Then $i_{G_2}$ is also surjective and the kernel of $i_{G_2}$ is given by
  $\nclosure{\psi(\ker \varphi)}$ where $\nclosure{T}$ is the normal closure of a subset
  $T$.  In particular, we have the isomorphism 
  $G_2 / \nclosure{\psi(\ker \varphi)} \simeq G_1 *_H G_2$.
\end{lemma}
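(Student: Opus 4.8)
The plan is to lean on the universal property of the amalgamated free product $G *_K H$ and on the standard fact that $G *_K H$ is generated by the images $i_G(G)$ and $i_H(H)$, together with the defining relation $i_G \circ \varphi = i_H \circ \psi$.

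First I would prove that $i_H$ is surjective. Since $\varphi$ is onto, every element of $G$ has the form $\varphi(k)$ with $k \in K$, and then $i_G(\varphi(k)) = i_H(\psi(k)) \in i_H(H)$. Hence $i_G(G) \subseteq i_H(H)$, so the entire generating set $i_G(G) \cup i_H(H)$ lies in the subgroup $i_H(H)$, which forces $i_H(H) = G *_K H$.

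Next I would identify $\ker i_H$ with $N := |\psi(\ker\varphi)|$. The inclusion $N \subseteq \ker i_H$ is immediate: for $k \in \ker\varphi$ we have $i_H(\psi(k)) = i_G(\varphi(k)) = 1$, so $\psi(\ker\varphi) \subseteq \ker i_H$, and since the latter is normal it contains the normal closure $N$. For the reverse inclusion the idea is to factor the quotient $q \colon H \to H/N$ through $i_H$, i.e.\ to build a homomorphism $F \colon G *_K H \to H/N$ with $F \circ i_H = q$; then $i_H(h) = 1$ will give $q(h) = F(i_H(h)) = 1$, whence $h \in N$. To produce $F$ from the universal property I would feed in the two maps $q$ on $H$ and $f$ on $G$, where $f$ is defined by $f(\varphi(k)) := q(\psi(k))$.

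The delicate point --- and the only place where surjectivity of $\varphi$ is truly used --- is that this prescription defines $f$ on all of $G$ and is well defined: if $\varphi(k) = \varphi(k')$ then $kk'^{-1} \in \ker\varphi$, so $\psi(k)\psi(k')^{-1} \in \psi(\ker\varphi) \subseteq N$ and the two values coincide in $H/N$. Once $f$ is seen to be a homomorphism with $f \circ \varphi = q \circ \psi$, the universal property supplies the desired $F$, completing $\ker i_H \subseteq N$. Combining surjectivity of $i_H$ with $\ker i_H = N$, the first isomorphism theorem then yields $H/|\psi(\ker\varphi)| \simeq G *_K H$, as claimed. I expect every step beyond the well-definedness of $f$ to be routine, the latter being the sole genuine obstacle.
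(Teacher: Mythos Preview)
Your argument is correct. The paper itself does not prove this lemma; it simply records it as ``a fundamental fact in the combinatorial group theory'' and moves on. So there is no proof in the paper to compare against, and your write-up fills that gap cleanly: surjectivity of $i_H$ via $i_G(G) \subseteq i_H(H)$, the easy inclusion $|\psi(\ker\varphi)| \subseteq \ker i_H$, and the reverse inclusion by manufacturing the retraction $F\colon G *_K H \to H/N$ from the universal property. The well-definedness check for $f$ is indeed the only non-formal point, and you handle it correctly.
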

Since the homomorphism $\pi_1(T^2) \to \pi_1(D^2\times S^1)$ is
surjective in the diagram~$(\ref{diag:Kampen})$ and the kernel is
generated by a meridian, we have the isomorphism:
\begin{equation}\label{eqn:isom_C_n_Sigma2}
  \pi_1(C_n) / \nclosure{\mu_n} \simeq \pi_1(\branchedcoverZHS{n}),
\end{equation}
where $\mu_n$ is the meridian such that $\homPiICnEk (\mu_n) = \mu^n$.
Hence we can regard the homomorphism $\hat{\pi}$ as the projection
from $\pi_1(C_n)$ onto $\pi_1(C_n) / \nclosure{\mu_n}$.
Similarly, we have the following isomorphism for $\pi_1(\Sigma)$:
\begin{equation}\label{eqn:isom_SigmaK_Sigma}
  \pi_1(\knotexterior)/\nclosure{\mu} \simeq \pi_1(\Sigma).
\end{equation}

From Lemmas~\ref{lemma:presentation_for_C_n} and
\ref{lemma:presentation_homology_C_n} and the above relation between
$\pi_1(C_n)$ and $\pi_1(\branchedcoverZHS{n})$, we obtain the
following presentations of $\pi_1(\branchedcoverZHS{n})$ and
$H_1(\branchedcoverZHS{n};\Z)$.
\begin{lemma}\label{lemma:presentation_for_Sigma_n}
  Given a Lin presentation of $\pi_1(\knotexterior)$ of the form
  \[
  \pi_1(\knotexterior) = 
  \langle 
    x_1, \ldots, x_{2g}, \mu \,|\, 
    \mu \alpha_i \mu^{-1} = \beta_i , 
    i=1, \ldots, 2g
  \rangle,
  \]
  then 
  $\pi_1(\branchedcoverZHS{n})$ admits a presentation of the form 
  \begin{equation}\label{eqn:presentation_for_Sigma_n}
    \pi_1(\branchedcoverZHS{n}) = 
    \langle
    \tau^j \tilde{x}_1, \ldots, \tau^j \tilde{x}_{2g} \,|\,
    \tilde{\alpha}_{i}^{(j)} = \tilde{\beta}_i^{(j-1)},
    1 \leq i \leq 2g, 1 \leq j \leq n
    \rangle, 
  \end{equation}
  where $\tilde{x}_i$ is the lift of $x_i$ to $B_n$ and $\tilde{\alpha}_i^{(j)}$, 
  $\tilde{\beta}_i^{(j)}$
  denote the words obtained from $\alpha_i$, $\beta_i$ 
  by replacing $x_1, \ldots, x_{2g}$ with
  $\tau^j \tilde{x}_1, \ldots, \tau^j \tilde{x}_{2g}$
  for $i=1, \ldots, 2g$ and $j=1, \ldots, n$.  The homology group
  $H_1(\branchedcoverZHS{n};\Z)$ is expressed as
  \[
  Z \xrightarrow{A} Z \to H_1(\branchedcoverZHS{n};\Z),
  \]
  where $Z$ and $A$ are as in
  Lemma~\ref{lemma:presentation_homology_C_n}.  Moreover for 
  $\bm{x}_j = \trans{([\tau^j(\wtilde{x}_1)], \ldots, [\tau^j(\wtilde{x}_{2g})])}$, 
  the following relation holds in
  $H_1(C_n;\Z)$:
  \[
  \bm{x}_0 + \cdots + \bm{x}_{n-1} = \bm{0}.
  \]
\end{lemma}
Note that it holds that 
$\tau^n \tilde x_i = \mu_n \tilde x_i \mu_n^{-1}
= \tilde x_i$ in $\pi_1(\branchedcoverZHS{n})$.

\begin{remark}\label{remark:RBn_in_RCn}
  Since the group $\pi_1(\branchedcoverZHS{n})$ is just the quotient
  of $\pi_1(C_n)$ by the relation $\mu_n=1$, we can view 
  $R(\branchedcoverZHS{n})$ as consisting of representations
  $\rho \in R(C_n)$ satisfying $\rho(\mu_n) = \I$.
\end{remark}

\section{On the characters of metabelian representations}
\label{section:char_metabelian}
The purpose of this section is to describe a feature of
metabelian characters via $\Z_2$--action (involution) on character varieties.
Subsections~\ref{subsection:reducible_representations} and
\ref{subsection:irreducible_metabelian} deal with reducible metabelian
representations and irreducible ones separately.  We will see that the
metabelian characters gives the fixed points on the
character variety via the involution in 
Subsection~\ref{subsection:involution_character_variety}.

\subsection{Reducible representations}
\label{subsection:reducible_representations}
From the definition, every reducible representation is conjugate to a
representation whose image consists of upper triangular matrices in
$\SL$.
\begin{remark}\label{remark:metabelian}
  If $A$ and $B$ are upper triangular $\SL$-elements, then the
  commutator $[A, B]$ lies in ${\rm Para}$ with eigenvalues $1$.  So
  we can see that the commutator subgroup of the image of any
  reducible representation is an abelian subgroup in $\SL$. Hence all
  reducible representations are metabelian.  The set
  $R^{\mathrm{meta}}(\knotexterior)$ of metabelian representations can
  be decomposed as a union
  \[
  \{\rho \in R^{\mathrm{irr}}(\knotexterior) \,|\, 
    \rho:{\rm metabelian}\} 
  \cup R^{\mathrm{red}}(\knotexterior).
  \]
\end{remark}

Furthermore we have two kinds of reducible representations, one is
abelian and another is non--abelian.  Every abelian representation
factors through the abelianization $H_1(\knotexterior;\Z)$.  Since
the homology group $H_1(\knotexterior;\Z)$ is generated by the
homology class of the meridian $\mu$, each abelian representation is
determined by the image of the meridian $\mu$.  
For a given $\lambda \in \C^*$, we denote by $\varphi_{\lambda}$ the 
abelian representation given by
\[
\varphi_{\lambda} \co \pi_1(\knotexterior)   \to \SL, \quad
\mu \mapsto 
  \begin{pmatrix}
    \lambda & 0 \\
    0 & \lambda^{-1}
  \end{pmatrix}.
\]
By taking conjugation, we can assume that the image of any reducible
representation consists of upper triangular $\SL$-elements.  This
means that for any reducible representation $\rho$, there is a complex
number $\lambda$ such that the character of $\rho$ is the same as that
of the abelian representation $\varphi_\lambda$.
For non--abelian reducible
representations, it is known that the following fact holds.
\begin{lemma}[\cite{CCGLS}, \cite{heusener01:_defor_of_reduc_repres_of}]
\label{lemma:reducible_rep}
  There exists a reducible non--abelian representation
  \[\rho \co \pi_1(\knotexterior) \to \SL\] such that
  $\chi_{\rho}=\chi_{\varphi_{\lambda}}$ if and only if
  $\Delta_K(\lambda^2)=0$.
\end{lemma}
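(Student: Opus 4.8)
The plan is to use Lin's presentation to turn the existence of a non--abelian reducible representation with character $\chi_{\varphi_\lambda}$ into a single linear--algebra condition, and then to identify that condition with $\Delta_K(\lambda^2)=0$.

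First I would put such a representation into a normal form. A reducible $\rho$ with $\chi_\rho=\chi_{\varphi_\lambda}$ can be conjugated so that its image is upper triangular. Its diagonal part is an abelian representation with the same character, hence factors through $H_1(\Sigma_K;\Z)=\Z\langle\mu\rangle$; thus $\rho(\mu)$ has diagonal $(\lambda,\lambda^{-1})$ (up to exchanging $\lambda\leftrightarrow\lambda^{-1}$), while each $x_i$, lying in the commutator subgroup, has diagonal $(1,1)$, so $\rho(x_i)=\begin{pmatrix}1 & d_i\\0 & 1\end{pmatrix}$. If $\lambda^2=1$ then every such matrix commutes (the image sits inside $\pm\{$unipotent upper triangular$\}$), so $\rho$ is abelian; since $|\Delta_K(1)|=1\neq 0$ this degenerate case already agrees with the statement, and I treat the main case $\lambda^2\neq 1$, where I conjugate by a unipotent upper triangular matrix to arrange $\rho(\mu)=\mathrm{diag}(\lambda,\lambda^{-1})$, leaving the $d_i$ as the only remaining data.

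Next I would impose the relations $\mu\alpha_i\mu^{-1}=\beta_i$. Because every $x_j$ maps to a unipotent upper triangular matrix, the upper--right entry is additive along words, so the upper--right entry of $\rho(\alpha_i)$ is $(V\bm{d})_i$ and that of $\rho(\beta_i)$ is $(U\bm{d})_i$, where $\bm{d}=\trans{(d_1,\dots,d_{2g})}$ and $V,U$ are the exponent--sum matrices. Conjugation by $\mathrm{diag}(\lambda,\lambda^{-1})$ multiplies such an entry by $\lambda^2$, so the relations are equivalent to $(\lambda^2 V-U)\bm{d}=\bm{0}$. Any solution $\bm{d}$ defines a genuine representation (the relators are captured precisely by this system), and it is non--abelian exactly when $\bm{d}\neq\bm{0}$, since a nonzero $d_i$ makes $[\rho(\mu),\rho(x_i)]$ parabolic with upper--right entry $(\lambda^2-1)d_i\neq 0$. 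Hence a non--abelian reducible representation with character $\chi_{\varphi_\lambda}$ exists if and only if $\det(\lambda^2 V-U)=0$.

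Finally I would identify this determinant with the Alexander polynomial. Using Proposition~\ref{prop:general_U_V}, write $V=\trans{Q}T$ and $U=QT$ with $\det T=\pm1$; then
$\det(\lambda^2 V-U)=\det(\lambda^2\trans{Q}-Q)\det T=\det(\lambda^2 Q-\trans{Q})\det T=\det(\lambda^2 U-V)=\Delta_K(\lambda^2)$,
the middle equality being invariance of the determinant under transposition and the last one Remark~\ref{remark:Alexander_poly}; this settles both directions at once (and the orderings $\lambda\leftrightarrow\lambda^{-1}$ agree via the symmetry $\Delta_K(t)\doteq\Delta_K(t^{-1})$). The hard part will be the bookkeeping in the reduction step---checking that the off--diagonal entries really assemble into $V\bm{d}$ and $U\bm{d}$ and that no residual conjugation freedom has been overlooked---together with the reconciliation of $\det(\lambda^2 V-U)$ with $\Delta_K(\lambda^2)=\det(\lambda^2 U-V)$, which rests precisely on the Seifert--form symmetry encoded in $V=\trans{Q}T$, $U=QT$.
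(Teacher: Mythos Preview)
The paper does not actually prove this lemma: it is quoted from \cite{heusener01:_defor_of_reduc_repres_of}, with the remark that for knots in $S^3$ it goes back to Burde and de~Rham. So there is no ``paper's own proof'' to compare against; what you have written is a self-contained argument using the paper's internal machinery (Lin's presentation, Proposition~\ref{prop:general_U_V}, Remark~\ref{remark:Alexander_poly}) in place of the Fox-calculus / twisted-cohomology arguments in the cited sources.

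Your argument is correct. The normalisation step is sound: once the image is upper triangular, the diagonal part is an abelian representation, hence factors through $H_1(\Sigma_K;\Z)$, forcing $\rho(x_i)$ unipotent; and when $\lambda^2\neq 1$ a further unipotent conjugation diagonalises $\rho(\mu)$ without disturbing the $\rho(x_i)$. The reduction of the relators to $(\lambda^2 V-U)\bm d=\bm 0$ is exactly right because the upper-right entry is additive on unipotent upper triangular matrices, and your identification $\det(\lambda^2 V-U)=\det(\lambda^2 U-V)=\Delta_K(\lambda^2)$ via $V=\trans{Q}T$, $U=QT$ and transpose-invariance of the determinant is clean. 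The disposal of $\lambda^2=1$ is also fine (there the whole image lies in $\pm\mathrm{Para}$, hence is abelian, matching $\Delta_K(1)=\pm 1\neq 0$). The only cosmetic point is that equality of determinants here is literally equality, not just up to sign, since the same factor $\det T$ appears on both sides; your ``up to $\lambda\leftrightarrow\lambda^{-1}$'' remark is unnecessary once you have shown $\det(\lambda^2 V-U)=\Delta_K(\lambda^2)$ directly.
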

This is a well--known result of Burde~\cite{burde67:_darst_von_knoten}
and de~Rham~\cite{rham67:_introd_aux_polyn_dun_noee_ud} if
$\knotexterior$ is the knot exterior of a knot in $S^3$.

\subsection{The characters of metabelian and binary dihedral representations}
\label{subsection:irreducible_metabelian}
In this subsection, we focus on irreducible metabelian representations
of a knot group.  We begin with a review on binary dihedral representations of a
knot group $\pi_1(\knotexterior)$.  As we will see, binary dihedral
representations form a subset of metabelian representations and give
the representatives of conjugacy classes for irreducible metabelian
$\SL$-representations.

First we recall the binary dihedral group in $\SU$. Set
\[
S^1_A = 
\left\{\left.
  \begin{pmatrix}
    a & 0 \\
    0 & \bar{a}
  \end{pmatrix}
  \,\right|\,
  a \in \C,|a|=1 
\right\}
\quad
\text{and}
\quad 
S^1_B = 
\left\{\left.
  \begin{pmatrix}
    0 & b \\
    -\bar{b} & 0
  \end{pmatrix}
  \,\right|\,
  b \in \C,
  |b|=1
\right\}
\]
and define the binary dihedral group as $N = S^1_A \cup S^1_B \subset
\SU$. It is easy to see $N$ is closed under multiplication, and a
simple calculation reveals that for any $g$, $h \in N$, we have $[g,
h] \in S^1_A$. 
An $\SL$-representation is said to be {\it binary dihedral} 
if the image is contained in $N$.
It follows that $[N, N] = S^1_A$ is abelian, that
$N$ is a metabelian group, and that every $\SL$-representation
$\rho$ with image in $N$ is metabelian.
This shows that every binary dihedral representation is metabelian,
and the next result gives an equivalence between irreducible
metabelian characters and irreducible binary dihedral characters.

\begin{proposition}\label{prop:meta_conj_dihedral}
  Every irreducible metabelian $\SL$-representation of
  $\pi_1(\knotexterior)$ is conjugate to an irreducible binary
  dihedral representation.  Hence we have the following equation:
  \[
  \{\chi_\rho\,|\, \rho:{\rm irreducible\, metabelian}\} =
  \{\chi_\rho\,|\, \rho:{\rm irreducible\, binary\, dihedral}\}.
  \]
  Moreover the number of conjugacy classes of irreducible metabelian
  representations is given by
  \[
  \frac{|\Delta_K(-1)|-1}{2},
  \]
  where $\Delta_K(t)$ is the Alexander polynomial of $K$.
\end{proposition}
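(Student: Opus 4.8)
The plan is to prove the statement in three parts: first establish that every irreducible metabelian representation is conjugate to a binary dihedral one, then deduce the equality of character sets, and finally count the conjugacy classes.

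For the first and main part, I would start from the structural fact recorded in the preliminaries: if $\rho$ is metabelian, then $\rho([\pi_1(\Sigma_K),\pi_1(\Sigma_K)])$ is contained, after conjugation, in one of the maximal abelian subgroups $\mathrm{Hyp}$ or $\mathrm{Para}$. Since $\pi_1(\Sigma_K)^{\mathrm{ab}} = H_1(\Sigma_K;\Z) \cong \Z$ is generated by the meridian $\mu$, every element $\gamma$ of $\pi_1(\Sigma_K)$ can be written as $\mu^{m}c$ with $c$ in the commutator subgroup. Thus $\rho$ is determined by $\rho(\mu)$ together with its restriction to the commutator subgroup. The key step is to analyze how $\rho(\mu)$ acts by conjugation on the abelian image of the commutator subgroup. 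First I would rule out the parabolic case for irreducible $\rho$: if the commutator subgroup lands in $\mathrm{Para}$, then $\rho(\mu)$ must normalize $\mathrm{Para}$, and a short case analysis forces $\rho$ to be reducible, contradicting irreducibility. So the commutator subgroup maps into $\mathrm{Hyp}$, and after conjugation we may assume its image lies in the diagonal torus.

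Next I would exploit that conjugation by $\rho(\mu)$ preserves this diagonal image. Because $\rho$ is irreducible, $\rho(\mu)$ cannot itself be diagonal (otherwise the whole image would be diagonal, hence reducible); the only remaining possibility for an element of $\SL$ that normalizes the diagonal torus but does not lie in it is an antidiagonal matrix, i.e.\ $\rho(\mu)$ interchanges the two eigenlines. Conjugating by a further diagonal element, I would normalize $\rho(\mu)$ to the form $\bigl(\begin{smallmatrix} 0 & s \\ -s^{-1} & 0\end{smallmatrix}\bigr)$, and by rescaling arrange $\rho(\mu)$ to be (a scalar multiple of) $\bm{j}$. Writing the diagonal images of commutators as $\mathrm{diag}(e^{\sqrt{-1}\theta}, e^{-\sqrt{-1}\theta})$, which are exactly the elements $e^{\bm{i}\theta}$ of $S^1_A$, and checking that $\mu$ maps into $S^1_B \cdot(\text{scalar})$, I would verify that the image is conjugate into the binary dihedral group $N$. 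The main obstacle here is the bookkeeping that pins down $\rho(\mu)$ up to the exact normal form realizing $N$, rather than merely placing it in the normalizer of the torus; this requires using Proposition~\ref{prop:dihedral=meta} together with the multiplicative structure of $N$ recorded just before it to confirm that the normalized representation genuinely takes values in $N$.

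The equality of character sets then follows immediately: binary dihedral representations are metabelian by Proposition~\ref{prop:dihedral=meta}, giving one inclusion, and the conjugation just constructed gives the reverse inclusion on the level of characters, since conjugate representations have equal characters and two irreducible representations with the same character are conjugate by \cite[Proposition 1.5.2]{CS:1983}. For the count, I would pass to the abelianization and identify irreducible metabelian characters with the relevant $\rho$-data on $H_1$ of the double branched cover. The restriction of $\rho$ to the commutator subgroup factors through $H_1(\Sigma_2;\Z)$, whose order is $|\Delta_K(-1)|$; the diagonal (binary dihedral) representations correspond to characters of this finite abelian group, and the involution $\theta \mapsto -\theta$ induced by conjugation by $\bm{j}$ identifies a character with its inverse. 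The trivial character yields a reducible (abelian) representation and must be discarded, and the remaining nontrivial characters pair up under this involution, giving exactly $(|\Delta_K(-1)|-1)/2$ conjugacy classes of irreducible metabelian representations.
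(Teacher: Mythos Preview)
Your overall strategy is reasonable and genuinely different from the paper's: the paper works concretely with Lin's presentation~(\ref{eqn:Lin_equation}), using the relations $\mu\alpha_i\mu^{-1}=\beta_i$ together with $\rho(\mu)=\bm{j}$ to derive the linear system $(V+U)\begin{pmatrix}\log r_1\\\vdots\\\log r_{2g}\end{pmatrix}=\bm{0}$ and $(V+U)\begin{pmatrix}\theta_1\\\vdots\\\theta_{2g}\end{pmatrix}\equiv\bm{0}\pmod{2\pi\Z}$, and then uses $\det(V+U)=\Delta_K(-1)\neq 0$ both to force $r_i=1$ and to count the $|\Delta_K(-1)|$ solutions. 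You instead argue abstractly via the normalizer of the diagonal torus; this is cleaner conceptually but you must supply the step that Lin's presentation gives for free.

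The gap is precisely that step. After normalizing so that the commutator subgroup lands in ${\rm Hyp}$ and $\rho(\mu)$ is antidiagonal, you write the diagonal images as $\mathrm{diag}(e^{\sqrt{-1}\theta},e^{-\sqrt{-1}\theta})$ without justification. Nothing you have done so far forces the eigenvalues to have modulus~$1$; a priori they are arbitrary elements of $\C^*$, and then the image does \emph{not} lie in $N\subset\SU$, so the representation is not binary dihedral. The fix is latent in your own counting paragraph: since conjugation by $\rho(\mu)$ inverts diagonal matrices, the induced homomorphism $H_1(C_\infty;\Z)\to\C^*$ is killed by $1+t$, hence factors through $H_1(C_\infty;\Z)/(1+t)$, which is finite of order $|\Delta_K(-1)|$; therefore the eigenvalues are roots of unity. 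But this argument must appear \emph{before} you claim the image sits in $N$, not afterwards as a counting device, and you should justify the identification of that quotient with (a group of order equal to that of) $H_1(\Sigma_2;\Z)$ rather than merely assert it.
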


For a knot in $S^3$, we can find the formula 
for the number of conjugacy classes of irreducible 
metabelian representations in \cite[Proposition 1.1 and Theorem~1.2]
{nagasato07:_finit_of_section_of_sl}.  
Eric Klassen~\cite[Theorem~10]{Klassen:1991} also proved the same formula 
for binary dihedral representations. 
Hence Proposition~\ref{prop:meta_conj_dihedral} for a knot in $S^3$ 
can be deduced by combining these two results. 
\begin{proof}[Proof of Proposition~\ref{prop:meta_conj_dihedral}]
  Let $\rho$ be an irreducible and metabelian $\SL$-representation of 
  $\pi_1(\knotexterior)$.
  Then for the generators in a Lin presentation~$(\ref{eqn:Lin_equation})$ 
  of a knot in an integral homology 3--sphere $\Sigma$,
  by taking conjugation, we have the following form of binary dihedral representations:
  \begin{equation}\label{eqn:explicit_binary_dihedral}
  \rho(x_i) = \left(
    \begin{array}{cc}
      \alpha_i & 0 \\
      0 & \alpha_i^{-1}
    \end{array}
  \right), \quad 
  \rho(\mu) = \left(
    \begin{array}{cc}
      0 & 1 \\
      -1 & 0
    \end{array}
  \right).
  \end{equation}
  Then we can apply the same argument as in 
  the proof of \cite[Proposition 1.1 and Theorem~1.2]
  {nagasato07:_finit_of_section_of_sl} 
  to deduce the above form and counting the conjugacy classes. 
\end{proof}

\begin{remark}
  We can find a higher rank analogy of Proposition~\ref{prop:meta_conj_dihedral} 
  for knots in $S^3$ in~\cite{BodenFriedlMetabelianI}.
\end{remark}

\subsection{An involution on the $\SL$-character variety}\label{subsec:slice}
\label{subsection:involution_character_variety}
In this subsection, we introduce involutions $\iota$ and $\hat{\iota}$
on $R(\knotexterior)$ and $X(\knotexterior)$ and identify the fixed point set of
$\hat{\iota}$ on $X^{\mathrm{irr}}(E_K)$ with irreducible metabelian
characters.  For $g \in \pi_1(\Sigma)$, we denote by $[g] \in
H_1(E_K;\Z) \simeq \Z$ the associated element in the homology group,
which we identify with $\Z$ by the isomorphism that sends the meridian
$\mu$ to the generator $1$. Next, we define an involution $\iota$ on
$R(E_K)$ as follows.  Given $\rho \in R(E_K)$, let $\iota(\rho)$ be
the representation such that $\iota(\rho)(g) = (-1)^{[g]}\rho(g)$ for
all $g \in \pi_1(E_K)$.  It is immediate that $\iota$ is an involution
on $R(E_K)$, and it induces an involution $\hat{\iota}$ on the character variety
$X(E_K)$ defined as follows. Given
$\chi_\rho \in X(E_K)$, let $\hiota(\chi_\rho)$ be the character such that
$\hat{\iota}(\chi_\rho) = (-1)^{[g]} \chi_{\rho}(g)$ for any $g \in
\pi_1(E_K)$. It is easy to check that $\hat{\iota}$ is an involution
on $X(E_K)$ and that $\hat{\iota}(\chi_\rho) = \chi_{\iota(\rho)}$
from the commutativity of trace with the scalar multiplication.

The involution $\hiota$ has the following fixed point set.
\begin{proposition}\label{prop:fixed_point}
  The fixed point set of $\hiota$ in $X^{\mathrm{irr}}(\knotexterior)$ is
  the set of irreducible metabelian characters, i.e.,
  \[
  X^{\mathrm{irr}}(\knotexterior)^{\hiota} = 
  \{\chi_\rho \,|\,
  \rho \co \rm{irreducible\, metabelian}\}
  \]
\end{proposition}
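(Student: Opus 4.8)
The plan is to prove both inclusions of the claimed equality $X^{\mathrm{irr}}(\Sigma_K)^{\hiota} = \{\chi_\rho \mid \rho : \text{irreducible metabelian}\}$, working always with characters of \emph{irreducible} representations, where I may use that two irreducible representations with the same character are conjugate (Proposition 1.5.2 of \cite{CS:1983}). I would first unwind the definition of $\hiota$: for an irreducible $\rho$, the character $\chi_\rho$ is fixed by $\hiota$ if and only if $\chi_{\iota(\rho)} = \chi_\rho$, i.e.\ $\tr(\alpha_{-1}(g)\rho(g)) = \tr\rho(g)$ for all $g \in \pi_1(\Sigma_K)$. Since $\alpha_{-1}(g) = (-1)^{\alpha(g)}$ depends only on the image of $g$ in $H_1(\Sigma_K;\Z) \cong \Z = \langle t\rangle$, and since the meridian $\mu$ generates this homology, every commutator and more generally every element of $[\pi_1(\Sigma_K),\pi_1(\Sigma_K)]$ lies in $\ker\alpha_{-1}$, so $\iota(\rho)$ agrees with $\rho$ on the commutator subgroup.

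For the easier inclusion, suppose $\rho$ is irreducible metabelian. Then $\iota(\rho) = \alpha_{-1}\cdot\rho$ is again a homomorphism into $\SL$ (this uses $\alpha_{-1}$ being a central-valued character, so the twist preserves the relations), and it is again irreducible with the same restriction to $[\pi_1(\Sigma_K),\pi_1(\Sigma_K)]$; hence it is again metabelian. The key point is to show $\chi_{\iota(\rho)} = \chi_\rho$. Here I would invoke Proposition \ref{prop:meta_conj_dihedral}: after conjugation $\rho$ may be taken in binary dihedral form, with $\rho(x_i)$ diagonal and $\rho(\mu) = \left(\begin{smallmatrix} 0 & 1 \\ -1 & 0\end{smallmatrix}\right)$. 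Twisting by $\alpha_{-1}$ multiplies $\rho(\mu)$ by $-1$ and fixes each $\rho(x_i)$ (since the $x_i$ lie in the commutator subgroup, by the Remark following Lemma \ref{lemma:Lin_presentation}). A direct check on traces of words in these generators, using that conjugating $\rho(\mu)$ by a diagonal matrix swaps the off-diagonal entries while negation of $\rho(\mu)$ can be absorbed by the conjugation $\mathrm{diag}(\sqrt{-1},-\sqrt{-1})$, shows $\iota(\rho)$ is conjugate to $\rho$, so their characters coincide and $\hiota(\chi_\rho) = \chi_\rho$.

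For the reverse inclusion, suppose $\rho$ is irreducible with $\hiota(\chi_\rho) = \chi_\rho$, i.e.\ $\chi_{\iota(\rho)} = \chi_\rho$. Since both $\rho$ and $\iota(\rho)$ are irreducible and share a character, Proposition 1.5.2 of \cite{CS:1983} gives an element $A \in \SL$ with $\iota(\rho)(g) = A\rho(g)A^{-1}$ for all $g$. On the commutator subgroup $\iota(\rho) = \rho$, so $A$ commutes with the irreducible-enough subgroup $\rho([\pi_1,\pi_1])$; on the meridian, $\alpha_{-1}(\mu)\rho(\mu) = -\rho(\mu) = A\rho(\mu)A^{-1}$. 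The strategy is to extract from these two conditions that $\rho([\pi_1,\pi_1])$ lies in a maximal abelian subgroup, i.e.\ that $\rho$ is metabelian: the relation $A\rho(\mu)A^{-1} = -\rho(\mu)$ forces $A^2$ to be central and $A$ to act as an order-two conjugation (essentially $\rho(\mu)$ up to scalar), and the centralizer condition on $\rho([\pi_1,\pi_1])$ then pins that image into the $+1$-eigenspace structure of $\mathrm{Ad}(A)$, which is exactly a maximal torus ${\rm Hyp}$ or ${\rm Para}$. I expect the main obstacle to be precisely this last step: controlling the conjugating element $A$ and deducing abelianity of $\rho([\pi_1,\pi_1])$ rather than merely $\mathrm{Ad}(A)$-invariance. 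I would handle it by a case analysis on whether $A$ has distinct eigenvalues (diagonalizable, giving ${\rm Hyp}$) or is parabolic (giving ${\rm Para}$), using that $\mathrm{Ad}(A)$ has order two on the relevant image together with irreducibility of $\rho$ to rule out degenerate configurations, thereby concluding $\rho$ is metabelian.
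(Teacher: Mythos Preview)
Your proposal is correct. The first inclusion (irreducible metabelian characters are $\hiota$-fixed) follows exactly the paper's argument, using Proposition~\ref{prop:meta_conj_dihedral} to put $\rho$ in binary dihedral form and conjugating by $\bm{i} = \mathrm{diag}(\sqrt{-1},-\sqrt{-1})$.

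For the reverse inclusion your approach is genuinely different from the paper's and in fact cleaner. The paper normalizes $\rho(\mu) = \bm{j}$ and then, for arbitrary commutators $\gamma_1,\gamma_2$, uses the trace conditions $\tr\rho(\gamma_i\mu) = 0$ and $\tr\rho(\gamma_1\gamma_2\mu) = 0$ (coming from $\chi_\rho = \chi_{\iota(\rho)}$ applied to words of odd $\alpha$-degree) together with an elementary matrix computation (Lemma~\ref{lemma:elementary_calc}) to show $\rho(\gamma_1)\rho(\gamma_2) = \rho(\gamma_2)\rho(\gamma_1)$ entry by entry. Your route via the conjugating element $A$ avoids this computation: once $A\rho(g)A^{-1} = \alpha_{-1}(g)\rho(g)$ for all $g$, restricting to the commutator subgroup gives $\rho([\pi_1,\pi_1]) \subset C_{\SL}(A)$, while $A\rho(\mu)A^{-1} = -\rho(\mu)$ forces $A \neq \pm\I$. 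Since the centralizer of any non-central element of $\SL$ is a maximal abelian subgroup (conjugate to ${\rm Hyp}$ or ${\rm Para}$), metabelianity follows immediately. Your expressed worry about the ``main obstacle'' is unfounded: no delicate case analysis is needed, and in fact the parabolic case for $A$ cannot occur, since $A^2$ commutes with $\rho(\mu)$ and with $\rho([\pi_1,\pi_1])$, hence with all of $\im\rho$, so irreducibility gives $A^2 = \pm\I$; combined with $A \in \SL$ and $A \neq \pm\I$ this forces $A^2 = -\I$ and $A$ has distinct eigenvalues $\pm\sqrt{-1}$. The paper's computation has the minor virtue of being self-contained without invoking \cite[Proposition~1.5.2]{CS:1983}, but your argument is more conceptual.
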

We apply the following elementary lemma to prove
Proposition~\ref{prop:fixed_point}.

\begin{lemma}\label{lemma:elementary_calc}
  We set $A \in M_2(\C)$ and 
  $C=\begin{pmatrix}
      0 & 1 \\
      -1 & 0
    \end{pmatrix}.$
Then we have 
  \[\trace AC = 0 \Leftrightarrow A=\trans{A}.\]
\end{lemma}

\begin{proof}[Poof of Proposition~\ref{prop:fixed_point}]
  We choose a Lin presentation for the knot group:
  \begin{equation}\label{eqn:pres_Lin}
    \pi_1(\knotexterior)=
    \langle x_1, \ldots, x_{2g}, \mu \,|\, \mu\alpha_i\mu^{-1} = \beta_i, \, i=1, \ldots, 2g \rangle.
  \end{equation}

  First we show that for every irreducible metabelian
  $\SL$-representation $\rho$ the image $\iota(\rho)$ is conjugate to
  $\rho$.
  It follows from Proposition~\ref{prop:meta_conj_dihedral} that after
  taking suitable conjugation, the generators in the presentation
  $(\ref{eqn:pres_Lin})$ are sent to the following matrices by $\rho$:
  \begin{equation}\label{eqn:irred_binary_form}
    \rho(x_i) = 
    \left(
      \begin{array}{cc}
        \lambda_i & 0 \\
        0 & \lambda_i^{-1}
      \end{array}
    \right), 
    \quad
    \rho(\mu) =
    \left(
      \begin{array}{cc}
        0 & 1 \\
        -1 & 0
      \end{array}
    \right).
  \end{equation}
  Since every $x_i$ is contained in the commutator subgroup of
  $\pi_1(\knotexterior)$, the images of these generators by
  $\iota(\rho)$ are given by
  \[
  \iota(\rho)(x_i) = \left(
    \begin{array}{cc}
      \lambda_i & 0 \\
      0 & \lambda_i^{-1}
    \end{array}
  \right), \quad 
  \iota(\rho)(\mu) = -\left(
    \begin{array}{cc}
      0 & 1 \\
      -1 & 0
    \end{array}
  \right).
  \]
  Hence we have
  \[
  \iota(\rho) = P \rho P^{-1}, \quad 
  P = \left(
    \begin{array}{cc}
      \sqrt{-1} & 0 \\
      0 & -\sqrt{-1}
    \end{array}
  \right).
  \]
  In particular, we obtain the equation 
  $\chi_\rho = \chi_{\iota(\rho)}$.

  Next we show that an irreducible $\SL$-representation $\rho$
  satisfying that $\chi_\rho = \chi_{\iota(\rho)}$ is metabelian.  From the
  equations $\chi_{\iota(\rho)}(\mu) = -\chi_\rho(\mu)$ and $\chi_\rho
  = \chi_{\iota(\rho)}$, we have $\trace\rho(\mu) = 0$.  Therefore by
  taking conjugation we can assume that $\rho(\mu)$ is given by the
  matrix 
  $\begin{pmatrix} 
    0 & 1 \\ 
    -1 & 0 
  \end{pmatrix}$.
  We choose any two elements $\gamma_1, \gamma_2$ from
  $[\pi_1(\knotexterior), \pi_1(\knotexterior)]$.
  Since $[\gamma_i\mu]=1$, we have that
  $\chi_{\iota(\rho)}(\gamma_i\mu) = -\chi_\rho(\gamma_i\mu)$.  From
  the assumption $\chi_\rho=\chi_{\iota(\rho)}$, we obtain $\trace
  \rho(\gamma_i\mu) = 0$.  By Lemma \ref{lemma:elementary_calc},
  $\rho(\gamma_i) = \trans{\rho(\gamma_i)}$
  holds for $i=1, 2$. Since $\gamma_1 \gamma_2 \in [\pi_1(\knotexterior), \pi_1(\knotexterior)]$,
  we also have $\rho(\gamma_1\gamma_2) = \trans{\rho(\gamma_1\gamma_2)}$
  Then we have the following equalities of matrices:
  \[
  \rho(\gamma_1\gamma_2) 
  = \trans{\rho(\gamma_1\gamma_2) }
  = \trans{\rho(\gamma_2)}\trans{\rho(\gamma_1)} 
  = \rho(\gamma_2)\rho(\gamma_1)
  = \rho(\gamma_2\gamma_1).
  \]
  Therefore the representation $\rho$ is metabelian. 
\end{proof}

By Propositions~\ref{prop:meta_conj_dihedral} and
\ref{prop:fixed_point}, we can conclude the following corollary.
\begin{corollary}\label{cor:fixed_point_dihedral}
  \[
  X^{\mathrm{irr}}(\knotexterior)^{\hiota} = 
  \{ \chi_\rho \,|\, 
  \rho \co {\rm irreducible\, binary\, dihedral}\}.
  \]
\end{corollary}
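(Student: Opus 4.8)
The plan is to combine the two results that immediately precede the corollary. Proposition~\ref{prop:fixed_point} identifies the fixed point set $X^{\mathrm{irr}}(\Sigma_K)^{\hiota}$ with the set of characters of irreducible \emph{metabelian} $\SL$-representations, while Proposition~\ref{prop:meta_conj_dihedral} asserts the equality of character sets
$$
\{\chi_\rho\,|\, \rho:\text{irreducible metabelian}\}
= \{\chi_\rho\,|\, \rho:\text{irreducible binary dihedral}\}.
$$
So the strategy is simply to chain these two identifications: rewrite the left-hand side of the desired equation using Proposition~\ref{prop:fixed_point}, then substitute the right-hand side using Proposition~\ref{prop:meta_conj_dihedral}.

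Concretely, I would first apply Proposition~\ref{prop:fixed_point} to get
$$
X^{\mathrm{irr}}(\Sigma_K)^{\hiota}
= \{\chi_\rho\,|\, \rho:\text{irreducible metabelian}\}.
$$
Then I would invoke the character-set equality from Proposition~\ref{prop:meta_conj_dihedral} to replace the right-hand side by the set of characters of irreducible binary dihedral representations, yielding exactly the claimed identity. The only thing to be careful about is that the two propositions genuinely speak about the same ambient set of characters (those of irreducible representations of the same knot group $\pi_1(\Sigma_K)$), so that the substitution is literal rather than merely an inclusion; both statements are phrased for irreducible representations of $\pi_1(\Sigma_K)$, so this matches.

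There is no real obstacle here, since both propositions have already been proved: the corollary is a formal consequence obtained by transitivity of the two set equalities. If anything, the one point worth a sentence of justification is \emph{why} the binary dihedral representations appearing in Proposition~\ref{prop:meta_conj_dihedral} are themselves metabelian (hence legitimately land in the fixed point set), but this is precisely the content of Proposition~\ref{prop:dihedral=meta}, which shows $[N,N]\subset \mathrm{Hyp}$ is abelian. Thus the binary dihedral characters sit inside $X^{\mathrm{irr}}(\Sigma_K)^{\hiota}$, and Proposition~\ref{prop:meta_conj_dihedral} gives the reverse containment, closing the loop.

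In summary, the proof is a two-line deduction: apply Proposition~\ref{prop:fixed_point}, then Proposition~\ref{prop:meta_conj_dihedral}, and conclude the stated equation.
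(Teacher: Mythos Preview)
Your proposal is correct and matches the paper's approach exactly: the paper simply states ``By Proposition~\ref{prop:meta_conj_dihedral} and Proposition~\ref{prop:fixed_point}, we can conclude the following corollary'' and gives no further argument. Your chaining of the two set equalities is precisely the intended deduction.
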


The fixed point set $X(\knotexterior)^{\hiota}$ could also contain
reducible characters. Actually, as we shall see, this fixed point set
$X(\knotexterior)^{\hiota}$ contains only one character coming from
reducible representations.

Before giving this proof, we define subsets in the
character variety where the fixed point set
$X(\knotexterior)^{\hiota}$ lives.  Every character in such a
subset sends the meridian $\mu$ to the same value.  We define such
subsets as a level set of a function on the character variety.

For any given $\gamma \in \pi_1(\knotexterior)$, we denote by
$I_{\gamma}$ a function on $X(\knotexterior)$ defined as $ I_\gamma
\co X(\knotexterior) \to \C,\, \chi \mapsto \chi(\gamma).  $ This
function is called the trace function of $\gamma$.

\begin{definition}\label{def:slice}
  We define the subset $S_c(\knotexterior)$ in $X(\knotexterior)$
  as the level set of $I_{\mu}$ at $c$, i.e.,
  \[
  S_c(\knotexterior) = I_{\mu}^{-1}(c).
  \]
  This subset is called the slice with the trace of meridian fixed by $c$.
\end{definition}
We also set $R_c(\knotexterior)$ as 
$R_c(\knotexterior) := \{\rho \in R(\knotexterior)\,|\, \trace \rho(\mu) = c\}.$

\begin{lemma}\label{lemma:fixed_pt_in_S0}
  The fixed point set $X(\knotexterior)^{\hiota}$ is contained in
  $S_0(\knotexterior)$ and the restriction of $\hiota$ to $S_0(\knotexterior)$ gives an
  involution on it.
\end{lemma}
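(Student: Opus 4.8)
The plan is to reduce both assertions to a single computation, namely that the trace function $I_\mu$ is anti-invariant under $\hiota$. First I would determine the effect of $\iota$ on the image of the meridian. Since $\alpha(\mu)=t$ and $\alpha_{-1}$ is the composition of $\alpha$ with evaluation at $t=-1$, we have $\alpha_{-1}(\mu)=-1$, so that $\iota(\rho)(\mu)=\alpha_{-1}(\mu)\,\rho(\mu)=-\rho(\mu)$ for every $\rho\in R(\Sigma_K)$. Taking traces gives $\chi_{\iota(\rho)}(\mu)=-\chi_\rho(\mu)$, which I would record as the identity $I_\mu\circ\hiota=-I_\mu$ on $X(\Sigma_K)$.

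With this identity the first claim is immediate. If $\chi\in X(\Sigma_K)^{\hiota}$, then $\hiota(\chi)=\chi$, hence $I_\mu(\chi)=I_\mu(\hiota(\chi))=-I_\mu(\chi)$, which forces $I_\mu(\chi)=0$. Therefore $\chi\in I_\mu^{-1}(0)=S_0(\Sigma_K)$, and we conclude $X(\Sigma_K)^{\hiota}\subset S_0(\Sigma_K)$.

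For the second claim I would check that $\hiota$ maps $S_0(\Sigma_K)$ into itself: for $\chi\in S_0(\Sigma_K)$ the same identity yields $I_\mu(\hiota(\chi))=-I_\mu(\chi)=0$, so $\hiota(\chi)\in S_0(\Sigma_K)$. Since $\hiota$ is already known to be an involution on all of $X(\Sigma_K)$, its restriction to the invariant subset $S_0(\Sigma_K)$ is again an involution, and the restricted map squares to the identity because the ambient one does.

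As for the main obstacle, there is honestly no deep point here: the whole lemma is a formal consequence of the computation $I_\mu\circ\hiota=-I_\mu$. The only thing requiring a little care is that $\hiota$ is genuinely well defined on characters, so that the composition $I_\mu\circ\hiota$ makes sense; this is guaranteed by the construction of $\hiota$ from $\iota$ together with the fact that conjugate representations share the same character. If one wishes to regard $S_0(\Sigma_K)$ as a Zariski-closed subvariety, one notes in addition that $I_\mu$ is a regular function on the affine variety $X(\Sigma_K)$, so that $\hiota$ restricts to a morphism of $S_0(\Sigma_K)$; but for the set-theoretic statement of the lemma even this is unnecessary.
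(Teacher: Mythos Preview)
Your argument is correct and matches the paper's own proof essentially line for line: both reduce to the identity $\hiota(\chi)(\mu)=-\chi(\mu)$ (the paper obtains the first inclusion by citing this computation from the proof of Proposition~\ref{prop:fixed_point}, and then checks invariance of $S_0(\Sigma_K)$ exactly as you do). There is nothing to add.
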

\begin{proof}
  We have seen that every character $\chi$ in
  $X(\knotexterior)^{\hiota}$ satisfies that $\chi(\mu)=0$ in the
  proof of Proposition \ref{prop:fixed_point}.  For any element $\chi$
  in $S_0(\knotexterior)$, we have
  \[
  \hiota(\chi)(\mu) = - \chi(\mu) = 0.
  \]
  The image of $S_0(\knotexterior)$ by $\hiota$ is contained in
  $S_0(\knotexterior)$ itself.  
\end{proof}
\begin{remark}\label{remark:interchange_slice}
  The involution $\iota$ interchanges $R_c(\knotexterior)$ and
  $R_{-c}(\knotexterior)$, and likewise $\hiota$ interchanges the
  $S_c(\knotexterior)$ and $S_{-c}(\knotexterior)$.
\end{remark}
Every representation in $R_0(\knotexterior)$ maps the meridian
$\mu$ to matrices whose eigenvalues are $\pm \sqrt{-1}$.  By Lemma
\ref{lemma:reducible_rep} and $\Delta_K(-1) \not = 0$, we have no
non--abelian reducible representation in $R_0(\knotexterior)$.
\begin{remark}\label{remark:reducible_reps_in_S0}
  Every reducible representation in $R_0(\knotexterior)$ is abelian.
\end{remark}
Therefore, by Lemma \ref{lemma:fixed_pt_in_S0}, the set of reducible
characters in $X(\knotexterior)^{\hiota}$ consists of only one
character of the abelian representation $\varphi_{\sqrt{-1}}$.
Summarizing, we have the following proposition.
\begin{proposition}\label{prop:fixed_point_set}
  The fixed point set $X(\knotexterior)^{\hiota}$ is expressed as
  \[
  X(\knotexterior)^{\hiota} = S_0(\knotexterior)^{\hiota} =
  \{\chi_{\rho}\,|\, \rho:{\rm irreducible\, binary\, dihedral} \} \cup
  \{\chi_{\varphi_{\sqrt{-1}}}\}.
  \]
\end{proposition}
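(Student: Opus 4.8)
The plan is to assemble the desired description of $X(\Sigma_K)^{\hiota}$ from the three pieces already established in the excerpt, by splitting the fixed point set according to whether a character comes from an irreducible or a reducible representation. First I would invoke the decomposition $X(\Sigma_K) = X^{\mathrm{irr}}(\Sigma_K) \cup X^{\mathrm{red}}(\Sigma_K)$, so that
\[
X(\Sigma_K)^{\hiota} = X^{\mathrm{irr}}(\Sigma_K)^{\hiota} \cup \bigl(X^{\mathrm{red}}(\Sigma_K) \cap X(\Sigma_K)^{\hiota}\bigr).
\]
For the irreducible part, Corollary \ref{cor:fixed_point_dihedral} already identifies $X^{\mathrm{irr}}(\Sigma_K)^{\hiota}$ with the characters of irreducible binary dihedral representations, which is exactly the first set in the claimed union. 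So the entire content of the proposition reduces to pinning down the reducible fixed characters, and to checking the asserted equality $X(\Sigma_K)^{\hiota} = S_0(\Sigma_K)^{\hiota}$.

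The equality with $S_0(\Sigma_K)^{\hiota}$ is immediate from Lemma \ref{lemma:fixed_pt_in_S0}: that lemma shows $X(\Sigma_K)^{\hiota} \subset S_0(\Sigma_K)$ and that $\hiota$ restricts to an involution of $S_0(\Sigma_K)$, so a fixed character of $\hiota$ on all of $X(\Sigma_K)$ is the same thing as a fixed character of the restricted involution on $S_0(\Sigma_K)$. Next I would handle the reducible contribution. A reducible fixed character lies in $X^{\mathrm{red}}(\Sigma_K) \cap S_0(\Sigma_K)$, hence is represented by some $\rho \in R_0(\Sigma_K)$ with reducible image, i.e. $\trace \rho(\mu) = 0$. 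The key input here is the combination of Lemma \ref{lemma:reducible_rep} with the fact that $\Delta_K(-1) \neq 0$ (which holds because $|\Delta_K(-1)|$ is an odd integer, being a knot determinant): since the eigenvalues of $\rho(\mu)$ are $\pm\sqrt{-1}$, a non-abelian reducible representation with this meridian trace would force $\Delta_K((\sqrt{-1})^2) = \Delta_K(-1) = 0$, a contradiction. Therefore every reducible representation in $R_0(\Sigma_K)$ is abelian, as recorded in the remark preceding the statement.

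It then remains to see that the abelian part contributes exactly one character. Every abelian character is the character of some $\varphi_{\lambda}$, and the condition $\trace\varphi_{\lambda}(\mu) = \lambda + \lambda^{-1} = 0$ forces $\lambda = \pm\sqrt{-1}$; but $\varphi_{\sqrt{-1}}$ and $\varphi_{-\sqrt{-1}}$ are conjugate (interchanging the diagonal entries via $\bm{j}$) and hence have the same character, so there is a single abelian character $\chi_{\varphi_{\sqrt{-1}}}$ in $S_0(\Sigma_K)$. One checks it is genuinely $\hiota$-fixed, since $\hiota(\chi_{\varphi_{\sqrt{-1}}})$ corresponds to $\varphi_{-\sqrt{-1}}$, which shares the character of $\varphi_{\sqrt{-1}}$. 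Combining the irreducible piece from Corollary \ref{cor:fixed_point_dihedral} with this unique reducible character yields the stated union. I do not anticipate a serious obstacle; the only point requiring care is the non-existence of non-abelian reducibles at trace $0$, which rests squarely on $\Delta_K(-1)\neq 0$, and the bookkeeping that the two candidate abelian representatives collapse to a single character.
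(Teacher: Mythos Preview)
Your proposal is correct and follows essentially the same route as the paper: split into irreducible and reducible characters, invoke Corollary~\ref{cor:fixed_point_dihedral} for the former, and use Lemma~\ref{lemma:fixed_pt_in_S0} together with Lemma~\ref{lemma:reducible_rep} and $\Delta_K(-1)\neq 0$ to reduce the latter to the single abelian character $\chi_{\varphi_{\sqrt{-1}}}$. The paper presents this as a summary of the preceding discussion rather than a standalone proof, but the logical content is identical; your explicit check that $\chi_{\varphi_{\sqrt{-1}}}$ is genuinely $\hiota$-fixed is a small addition the paper leaves implicit.
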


\section{Maps between representation spaces} 
\label{section:map_rep_spaces}
In this section, we derive how to make an $\SL$-representation of
$\pi_1(\branchedcoverZHS{n})$ from that of $\pi_1(\knotexterior)$ 
via the maps between representation spaces defined 
by the pull--backs of the homomorphisms $\homPiICnEk$, $q_*$, $i_*$ and $j_*$ 
between the fundamental groups. 
These maps satisfy the following diagram among the $\SL$-representation spaces of
$\pi_1(\knotexterior)$, $\pi_1(C_n)$, $\pi_1(\Sigma)$ and
$\pi_1(\branchedcoverZHS{n})$:
\[
\xymatrix{
  \pi_1(C_n) \ar[r]^{j_*} \ar[d]_{p_*} & \pi_1(\branchedcoverZHS{n}) \ar[d]^{q_*}\\
  \pi_1(\knotexterior) \ar[r]_{i_*} & \pi_1(\Sigma),  }
\qquad
\xymatrix{
  R(C_n)  & \ar[l]_{\pullbackj} R(\branchedcoverZHS{n})  \\
  R(\knotexterior) \ar[u]^{\pullbackp}& \ar[l]^{\pullbacki} \ar[u]_{\pullbackq}
  R(\Sigma).}
\]
Note that $\pullbackj$ is injective since $j_*$ is surjective. 
As in Remark~\ref{remark:RBn_in_RCn}, 
we can regard $\pullbackj(R(\branchedcoverZHS{n}))$ as the subset in
$R(C_n)$ consisting of $\rho \in R(C_n)$ which sends $\mu_n$ to the identity matrix.  
Hereafter we simply identify $R(\branchedcoverZHS{n})$ with its image under $\pullbackj$,
and likewise regard $R(\Sigma)$ as a subset of $R(\knotexterior)$.

In this setting, $\SL$-representations of $\pi_1(\branchedcoverZHS{n})$ 
is constructed from the following subset of those of $\pi_1(\knotexterior)$:
\begin{equation}\label{set:domain_Phi}
  \{\rho \in R(\knotexterior)\,|\, \rho(\mu)^n = \I\} \cup 
  \{\rho \in R(\knotexterior)\,|\, \rho(\mu)^n = -\I\}.
\end{equation}
In fact, our interest lies in the case of $n=2$ because this case has distinct features 
from the other cases (see Section~\ref{section:slice_S0}). 
To compare their features, we deal with general definitions including 
the cases other than $n=2$ in this section. 

We start with the observation that the pull--back of $\rho$ in $R(E_K)$ 
satisfying $\rho(\mu)^n = \I$ defines an $\SL$-representation of 
$\pi_1(\branchedcoverZHS{n})$ however this correspondence covers very few range
of $R(\branchedcoverZHS{n})$.
To deal with more range, we consider the pair of subset as in~$(\ref{set:domain_Phi})$.
We first observe the intersection $\pullbackp (R(\knotexterior)) \cap
R(\branchedcoverZHS{n})$ and the induced correspondence from the
following subset of $R(\knotexterior)$ to $R(\branchedcoverZHS{n})$. 
\begin{lemma}\label{lemma:intersection_ordinary}
  \begin{equation}\label{eqn:intersection_ordinary}
    (\pullbackp)^{-1} ( R(\branchedcoverZHS{n}) )
    =\{\rho \in R(E_K) \,|\,
      \rho(\mu)^n = \I\}. 
  \end{equation}
\end{lemma}
\begin{proof}
  It follows that $\rho \in (\pullbackp)^{-1} (R(\branchedcoverZHS{n}))
  \Leftrightarrow \pullbackp \rho (\mu_n) = \bm{1} \Leftrightarrow
  \rho(\mu^n) = \bm{1}$.  
\end{proof}
From the intersection $(\ref{eqn:intersection_ordinary})$ in
Lemma~\ref{lemma:intersection_ordinary}, we can make a correspondence
from the subset $\{\rho \in R(E_K) \,|\, \rho(\mu)^n = \I\}$ into 
$R(\branchedcoverZHS{n})$ by the pull--back $\pullbackp$,
i.e., 
\[
\pullbackp \co
\{\rho \in R(E_K) \,|\, \rho(\mu)^n = \I\} \to R(\branchedcoverZHS{n}) (\subset R(C_n)).
\]

\begin{remark}
In the special case that $n=2$, 
the subset
$(\pullbackp)^{-1} (R(\branchedcoverZHS{2}))$ turns
into
\[
\{\rho \in R(\knotexterior) \,|\, \rho \co \pi_1(\knotexterior) \to \{\pm
\bm{1}\}\} 
\]
consisting of only two conjugacy classes, both of whose restrictions to
$\pi_1(\branchedcoverZHS{2})$ are trivial. 
Hence, in the construction of an $\SL$-representation of 
$\pi_1(\branchedcoverZHS{2})$ from that of $\pi_1(\knotexterior)$ 
via the map $\pullbackp$, we can only deal with the slices 
$R_{\pm 2}(\knotexterior)$ at $\pm 2$, which correspond to the second roots of unity.
\end{remark}

Next we see that the pull--back by $\pullbackp$ of the
subset~$(\ref{set:domain_Phi})$ defines a $\PSL$-representation of
$\pi_1(\branchedcoverZHS{n})$. This allows us
to consider much more elements of $R(\knotexterior)$.
\begin{lemma}\label{lemma:rep_into_PSL}
  If $\rho$ is an element in the subset $(\ref{set:domain_Phi})$, then
  the pull-back $\pullbackp\rho$ defines $\PSL$-representation of
  $\pi_1(\branchedcoverZHS{n})$.
\end{lemma}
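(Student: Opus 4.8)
The plan is to show that $p_n^*\rho$ descends through the projection $\pi_n \colon \pi_1(C_n) \to \pi_1(\Sigma_n)$ when we pass from $\SL$ to $\PSL$. Recall from the isomorphism $(\ref{eqn:isom_C_n_Sigma2})$ that $\pi_1(\Sigma_n) \simeq \pi_1(C_n)/|\langle \mu_n \rangle|$, so by Corollary $\ref{cor:induced_hom_amalgamated}$ a representation of $\pi_1(C_n)$ factors through $\pi_1(\Sigma_n)$ precisely when it sends the normal closure $|\langle \mu_n \rangle|$ to the identity. Working in $\PSL$ rather than $\SL$, the relevant condition becomes that the image of $\mu_n$ is $\pm\I$, i.e.\ trivial in $\PSL$. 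So the whole statement reduces to computing $p_n^*\rho(\mu_n)$ and checking it lies in $\{\pm\I\}$.

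First I would use the defining property $p_n(\mu_n) = \mu^n$ of the meridian $\mu_n$ of $C_n$ (stated just before Lemma $\ref{lemma:presentation_for_C_n}$). By definition of the pull--back,
$$
p_n^*\rho(\mu_n) = \rho(p_n(\mu_n)) = \rho(\mu^n) = \rho(\mu)^n.
$$
Now the hypothesis is exactly that $\rho$ lies in the subset $(\ref{set:domain_Phi})$, which says $\rho(\mu)^n = \pm \I$. Hence $p_n^*\rho(\mu_n) = \pm\I$, which is the identity element of $\PSL = \SL/\{\pm\I\}$. Composing $p_n^*\rho$ with the quotient $\SL \to \PSL$ therefore yields a representation $\pi_1(C_n) \to \PSL$ that kills $\mu_n$.

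To finish, I would promote ``$\mu_n \mapsto 1$'' to ``the whole normal closure $|\langle \mu_n \rangle| \mapsto 1$''. This is immediate since a homomorphic image of a group automatically sends the normal closure of any subset into the normal closure of its image, and the image of the identity is the identity; concretely one can invoke Lemma $\ref{lemma:normal_closure}$, which writes every element of $|\langle \mu_n \rangle|$ as a product of conjugates $g_i^{-1}\mu_n^{\pm 1}g_i$, each of which is sent to $1$ in $\PSL$. Then Corollary $\ref{cor:induced_hom_amalgamated}$ applied to the composite $\pi_1(C_n) \to \PSL$ with $T = \{\mu_n\}$ gives a well--defined homomorphism $\pi_1(C_n)/|\langle \mu_n \rangle| \to \PSL$, i.e.\ via $(\ref{eqn:isom_C_n_Sigma2})$ a $\PSL$--representation of $\pi_1(\Sigma_n)$.

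The argument is essentially a one--line trace computation combined with the already--established descent machinery, so there is no real obstacle; the only point requiring slight care is the distinction between $\SL$ and $\PSL$. Over $\SL$ the pull--back does \emph{not} in general factor through $\pi_1(\Sigma_n)$, since $\rho(\mu)^n$ is only $\pm\I$ rather than $\I$; it is precisely the passage to $\PSL$ that absorbs the sign and makes the factorization valid. This is why the conclusion is stated for $\PSL$ and motivates the subsequent lifting problem $\PSL \to \SL$ discussed in the surrounding section.
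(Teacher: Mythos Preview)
Your proof is correct and follows essentially the same approach as the paper: identify $\pi_1(\Sigma_n)$ with $\pi_1(C_n)/|\langle \mu_n\rangle|$, and verify $p_n^*\rho(\mu_n)=\rho(\mu)^n=\pm\I$ so that the composite into $\PSL$ kills $\mu_n$ and descends. The paper's own proof is just the two-line version of this, stating that it suffices to show $p_n^*\rho(\mu_n)=\pm\I$ and that this ``follows from direct calculations''; you have simply spelled out those calculations and the invocation of Corollary~\ref{cor:induced_hom_amalgamated}.
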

\begin{proof}
  We can regard $\pi_1(\branchedcoverZHS{n})$ as the quotient
  $\pi_1(C_n) / \nclosure{\mu_n}$.  It is enough to prove that
  $\pullbackp \rho (\mu_n) = \pm \I$.  This follows from direct
  calculations. 
\end{proof}

We remark several results on the lifting problem without a sign refinement.  
For example, G. Burde~\cite{burde90:SU_2_repre_spaces} has shown that binary
dihedral representations form the branched point set of covering map
from $\SU$-representations to $\SO$-representations for a two--bridge
knot.  S. Boyer \& X. Zhang~\cite[Section 3]{BoyerZhangCullerShalenSeminorm} 
and J. Porti \&
M. Heusener~\cite[Section 4]{PortiHeusener:PSL_char_variety} deal with
the lifting problem for $\PSL$-representations of a finitely generated
and presented group in context of the character variety.

We will lift $\PSL$-representations of $\pi_1(\branchedcoverZHS{n})$ 
given in Lemma~\ref{lemma:rep_into_PSL} 
to $\SL$ by using a sign refinement in $R(C_n)$.  
We define an involution $\iota_n$ on $R(C_n)$ for our sign refinement 
in Definition~\ref{def:involution_n}.

\begin{definition}\label{def:involution_n}
  Set $\zeta_{2n} = \exp(\pi \sqrt{-1}/n)$. Given $\rho \in
  R(C_n)$, let $\iota_n(\rho)$ be the representation such that
  \[
  \iota_n(\rho) (g) = (\zeta_{2n})^{p_*[g]} \rho(g)
  \]
  for all $g \in \pi_1(C_n)$, where $p_* \co H_1(C_n;\Z) \to
  H_1(\knotexterior) \simeq \Z$ whose image is $n\Z$.  
\end{definition}
It is immediate that $\iota_n$ is an involution on $R(E_K)$, and it
induces an involution $\hiota_n$ on the character variety $X(C_n)$ 
defined as follows.
\begin{definition}
  Given $\chi_\rho \in X(C_n)$, let $\hiota_n (\chi_\rho)$ be the
  character such that \[\hiota_n(\chi_\rho)(g) = (\zeta_{2n})^{p_*[g]}
  \chi_\rho(g).\]
\end{definition}
It is easy to check that $\hiota_n$ is an involution on $X(C_n)$ and
that $\hiota_n(\chi_\rho) = \chi_{\iota_n(\rho)}$ from the commutativity of 
trace with the scalar multiplication.

Now we construct a map  
\[
\Phi \co 
  \{\rho \in R(\knotexterior)\,|\, \rho(\mu)^n = \pm \I\} \to R(\branchedcoverZHS{n})
\]
as follows. Here the domain of $\Phi$ is the
subset~$(\ref{set:domain_Phi})$, denoted by $D(\Phi)$. 
Since any element of $D(\Phi)$ is contained in the slice $R_{\xi(k/n)}(\knotexterior)$ 
at some $\xi(k/n) = 2\cos (k\pi/n)$, we focus on the slice $R_{2\cos
  (k\pi/n)}(\knotexterior)$. 
Let $\rho$ be an element of
$R_{\xi(k/n)}(\knotexterior)$ such that $\rho(\mu)^n = (-\I)^k$.
Then we define an $\SL$-representation $\Phi(\rho)$ of $\pi_1(C_n)$ by
sending each element $\gamma \in \pi_1(C_n)$ to the following matrix:
\[
\Phi(\rho)(\gamma) = (\iota_n)^k (\pullbackp \rho)(\gamma) =(
(\zeta_{2n})^{p_*[\gamma]} )^k \cdot \pullbackp\rho(\gamma),
\]
where $(\iota_n)^k$ is $k$ times composition of $\iota_n$.  
The well--definedness of $\Phi$ follows from the next proposition.
\begin{proposition}\label{prop:2nth-root}
  For $\rho \in R_{\xi(k/n)}(\knotexterior)$ satisfying
  $\rho(\mu)^n = (-\I)^k$, we have the equality 
  $\Phi(\rho)(\mu_n) = \I$.  
  In particular, this induces a homomorphism from
  $\pi_1(C_n) / \nclosure{\mu_n}$ into $\SL$.
\end{proposition}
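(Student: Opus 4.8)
The plan is to reduce the proposition to the single identity $\Phi_n(\rho)(\mu_n) = \I$, prove that by a direct computation, and then invoke the factorization machinery already in place. Note first that $\Phi_n(\rho) = \iota_n^k(p_n^*\rho)$ is automatically a genuine $\SL$-representation of $\pi_1(C_n)$: since $\iota_n$ is an involution on $R(C_n)$ and $p_n^*\rho \in R(C_n)$, so is every $\iota_n^k(p_n^*\rho)$. Thus the only content is the value it takes on the meridian $\mu_n$, and I would evaluate the defining formula $\Phi_n(\rho)(\gamma) = (\alpha_{\zeta_{2n}}(\gamma))^k\cdot p_n^*\rho(\gamma)$ at $\gamma = \mu_n$, showing that the two sign contributions cancel.

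First I would compute the matrix factor. Using $p_n(\mu_n) = \mu^n$ and that $\rho$ is a homomorphism,
$$
p_n^*\rho(\mu_n) = \rho(p_n(\mu_n)) = \rho(\mu^n) = \rho(\mu)^n = (-\I)^k = (-1)^k\,\I,
$$
where the last step uses the hypothesis $\rho(\mu)^n = (-\I)^k$. Next I would compute the scalar factor. By definition $\alpha_{\zeta_{2n}}$ is $p_n^*\alpha$ followed by the evaluation $t = \zeta_{2n}$; since $\alpha(p_n(\mu_n)) = \alpha(\mu^n) = t^n$, we get $\alpha_{\zeta_{2n}}(\mu_n) = \zeta_{2n}^n = \exp(\pi\sqrt{-1}) = -1$, and hence $(\alpha_{\zeta_{2n}}(\mu_n))^k = (-1)^k$.

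Combining the two factors gives
$$
\Phi_n(\rho)(\mu_n) = (-1)^k\cdot(-1)^k\,\I = (-1)^{2k}\,\I = \I,
$$
which is the first assertion. For the second assertion I would appeal to the isomorphism $\pi_1(C_n)/|\langle\mu_n\rangle| \simeq \pi_1(\Sigma_n)$ recorded in $(\ref{eqn:isom_C_n_Sigma2})$: since $\Phi_n(\rho)$ sends $\mu_n$ to the identity, Corollary~\ref{cor:induced_hom_amalgamated} (applied with $T = \{\mu_n\}$ and $\psi = \Phi_n(\rho)$) shows that $\Phi_n(\rho)$ descends to a homomorphism $\pi_1(C_n)/|\langle\mu_n\rangle| \to \SL$, i.e.\ to an $\SL$-representation of $\pi_1(\Sigma_n)$.

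There is no serious obstacle here; the whole statement is a short calculation. The only point requiring care is the bookkeeping of signs: the representation-theoretic factor $(-\I)^k$ coming from $\rho(\mu)^n$ and the scalar factor $(-1)^k$ coming from $\alpha_{\zeta_{2n}}$ must match so that their product is trivial. This is precisely why the root $\zeta_{2n}$ (rather than some other root of unity) enters the definition of $\iota_n$: the value $\zeta_{2n}^n = -1$ is exactly what forces the cancellation for every $k$, so I expect the verification of $\alpha_{\zeta_{2n}}(\mu_n) = -1$ to be the one step deserving explicit attention.
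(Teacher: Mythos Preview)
Your proof is correct and follows essentially the same approach as the paper's own proof: both compute $\Phi_n(\rho)(\mu_n)$ directly by evaluating the two factors $(\alpha_{\zeta_{2n}}(\mu_n))^k = \zeta_{2n}^{kn} = (-1)^k$ and $p_n^*\rho(\mu_n) = \rho(\mu)^n = (-\I)^k$, observe that their product is $\I$, and then invoke Corollary~\ref{cor:induced_hom_amalgamated} to obtain the induced homomorphism on $\pi_1(C_n)/|\langle\mu_n\rangle|$.
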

\begin{proof}
  This proposition follows from the following direct calculation.  Let
  $\rho \in R_{\xi(k/n)}(\knotexterior)$ satisfy 
  $\rho(\mu)^n = (-\I)^k$.
  \begin{align*}
    \Phi(\rho)(\mu_n)
    &= ( (\zeta_{2n})^{p_*[\mu_n]})^k \cdot \pullbackp \rho (\mu_n) \\
    &= (\zeta_{2n})^{kn} \cdot \rho(\mu)^n \\
    &= (-1)^k (-\I)^k \\
    &= \I.
  \end{align*}
  Since $\rho(\nclosure{\mu_n}) = \I$, 
  this representation
  $\Phi(\rho)$ induces a homomorphism from 
  $\pi_1(C_n) / \nclosure{\mu_n} =
  \pi_1(\branchedcoverZHS{n})$ into $\SL$. 
\end{proof}

We note that 
the $\SL$-representation $\Phi(\rho)$ is a lift of the
$\PSL$-representation $\pullbackp \rho$ of
$\pi_1(\branchedcoverZHS{n})$:
\[
\xymatrix{
  & \SL \ar[d]\\
  \pi_1(\branchedcoverZHS{n}) \ar[r]_{\pullbackp \rho}
  \ar[ur]^{\Phi(\rho)} &\PSL .}
\]
The domain
$D(\Phi)$ is decomposed as
\begin{align}
  D(\Phi)
  &= \{\rho \in R(\knotexterior) \,|\, \rho(\mu)^n = \pm \I\} \label{eqn:domain_rep} \\
  &= \bigcup_{k=1}^{n-1} \{\rho \in R_{\xi(k/n)}(\knotexterior)
  \,|\, \rho(\mu)^n = \pm \I\} \cup \{\rho \in
  R(\knotexterior)\,|\,\rho(\mu) = \pm \bm{1}\}.\nonumber
\end{align}
\begin{remark}\label{remark:domain}
  The set $\{\rho \in R(\knotexterior)\,|\, \rho(\mu) = \bm{1}\}$
  coincides with $\pullbacki(R(\Sigma))$.  Since the pull-back
  $\pullbacki$ is injective, we identify $\pullbacki(R(\Sigma))$ with
  $R(\Sigma)$.  The involution $\iota$ interchanges the two sets
  $\{\rho \in R(\knotexterior)\,|\, \rho(\mu) = \bm{1}\}$ and
  $\{\rho \in R(\knotexterior)\,|\, \rho(\mu) = -\bm{1}\}$ with
  each other.  So we can rewrite the domain $D(\Phi)$ in
  $(\ref{eqn:domain_rep})$ as
  \[
  \bigcup_{k=1}^{n-1} \{\rho \in R_{\xi(k/n)}(\knotexterior) \,|\,
  \rho(\mu)^n = \pm \I\} \cup R(\Sigma) \cup \iota(R(\Sigma)).
  \]
\end{remark}

The correspondence $\Phi$ induces a map $\hPhi$ on the subset
$t(D(\Phi))$ in $X(\knotexterior)$
\begin{align*}
\hPhi \co t(D(\Phi)) & \to X(\branchedcoverZHS{n}) \\
\chi_{\rho} & \mapsto  \chi_{\Phi(\rho)}
\end{align*}
It is easy to check the well--definedness from the construction of
$\Phi$ and the commutativity of trace with the scalar multiplication.
The domain $t(D(\Phi))$ of $\hPhi$ is decomposed as
\begin{equation}\label{eqn:domain_Phi}
  t(D(\Phi))=
  \bigcup_{k=1}^{n-1} S_{\xi(k/n)}(\knotexterior) 
  \cup X(\Sigma) \cup \hiota(X(\Sigma))
  =
  \bigcup_{k=0}^n S_{\xi(k/n)}(\knotexterior),
\end{equation}
where we identify $\pullbacki(X(\Sigma))$ with $X(\Sigma)$.  Hence
$\hPhi$ is defined on $S_{\xi(k/n)}(\knotexterior)$ as
\begin{equation}\label{eqn:def_hatPhi}
  \hPhi \co S_{\xi(k/n)}(\knotexterior) \to X(\branchedcoverZHS{n}),\ 
  \chi_\rho \mapsto \chi_{(\iota_n)^k(\pullbackp\rho)}.
\end{equation}

By the definition of $\Phi$, we see the image of any
abelian representation by $\Phi$.

\begin{lemma}\label{lemma:image_abelian}
  For every abelian representation $\rho$ in $D(\Phi)$, the image
  $\Phi(\rho)$ is the trivial representation of
  $\pi_1(\branchedcoverZHS{n})$.
\end{lemma}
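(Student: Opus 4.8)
The plan is to check that $\Phi_n(\rho)$ sends every generator of $\pi_1(\Sigma_n)$ to the identity matrix $\I$. By Lemma~\ref{lemma:presentation_for_Sigma_n}, after imposing $\mu_n = 1$ the group $\pi_1(\Sigma_n)$ is generated by the classes $\tau_n^j(\wtilde{x}_i)$ with $1 \le i \le 2g$ and $0 \le j \le n-1$, so it suffices to evaluate $\Phi_n(\rho)$ on these. Writing $\rho \in R_{\xi(k/n)}(\Sigma_K)$, the defining formula gives $\Phi_n(\rho)(\gamma) = (\alpha_{\zeta_{2n}}(\gamma))^k \cdot p_n^*\rho(\gamma)$, a product of a scalar in $\{\pm 1\}$ and the matrix $\rho(p_n(\gamma))$; I would show that on each generator $\gamma = \tau_n^j(\wtilde{x}_i)$ both factors are trivial.

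First I would locate the projections of these generators inside the commutator subgroup. By the Remark following Lemma~\ref{lemma:Lin_presentation}, each $x_i = p_n(\wtilde{x}_i)$ lies in $[\pi_1(\Sigma_K), \pi_1(\Sigma_K)]$. From the definition of $\tau_n$ together with $p_n(\wtilde{m}) = \mu$ one obtains $p_n(\tau_n(\gamma)) = \mu\, p_n(\gamma)\, \mu^{-1}$ (consistent with the earlier Remark that $\tau_n^n(\gamma) = \mu_n \gamma \mu_n^{-1}$ and $p_n(\mu_n) = \mu^n$), so that $p_n(\tau_n^j(\wtilde{x}_i)) = \mu^j x_i \mu^{-j}$. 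Being a conjugate of a commutator, this element again lies in $[\pi_1(\Sigma_K), \pi_1(\Sigma_K)]$.

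The conclusion then follows by killing both factors on the commutator subgroup. Since $\rho$ is abelian it factors through $H_1(\Sigma_K;\Z) = \pi_1(\Sigma_K)/[\pi_1(\Sigma_K), \pi_1(\Sigma_K)]$, hence is trivial there; thus $p_n^*\rho(\tau_n^j(\wtilde{x}_i)) = \rho(\mu^j x_i \mu^{-j}) = \I$. Likewise $\alpha$ factors through the abelianization, so $\alpha(p_n(\tau_n^j(\wtilde{x}_i))) = 0$ and therefore $\alpha_{\zeta_{2n}}(\tau_n^j(\wtilde{x}_i)) = \zeta_{2n}^0 = 1$. Hence $\Phi_n(\rho)(\tau_n^j(\wtilde{x}_i)) = 1^k \cdot \I = \I$ for every generator, and $\Phi_n(\rho)$ is the trivial representation of $\pi_1(\Sigma_n)$. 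Note this argument is uniform in $k$ and also covers the pieces $R(\Sigma)$ and $\iota(R(\Sigma))$ of the domain, where $\rho(\mu) = \pm\I$.

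There is no deep obstacle here; the only point demanding care is the identity $p_n(\tau_n^j(\wtilde{x}_i)) = \mu^j x_i \mu^{-j}$ — that projecting the $\tau_n$-twist amounts to meridional conjugation — after which the result is immediate from the observation that both the abelian representation $\rho$ and the sign character $\alpha_{\zeta_{2n}}$ vanish on $[\pi_1(\Sigma_K), \pi_1(\Sigma_K)]$.
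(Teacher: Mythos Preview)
Your argument is correct and follows essentially the same route as the paper: evaluate $\Phi_n(\rho)$ on the generators $\tau_n^j(\wtilde{x}_i)$, observe that $p_n(\tau_n^j(\wtilde{x}_i)) = \mu^j x_i \mu^{-j}$ lies in the commutator subgroup, and conclude since an abelian $\rho$ kills commutators. You are in fact slightly more explicit than the paper in separately verifying that the sign factor $\alpha_{\zeta_{2n}}$ is trivial on these generators, whereas the paper writes $\Phi_n(\rho)(\tau_n^j(\wtilde{x}_i)) = \rho(\mu^j x_i \mu^{-j})$ directly.
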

\begin{proof}
  Since $\rho$ is abelian, it sends all elements in the commutator
  subgroup of $\pi_1(\knotexterior)$ to the identity matrix.  For
  each generator $\tau^j(\wtilde{x}_i)$ in the presentation of
  Lemma~\ref{lemma:presentation_for_Sigma_n}, the image by
  $\Phi(\rho)$ is expressed as 
  $\Phi(\rho)(\tau^j\wtilde{x}_i) = \rho(\mu^j x_i \mu^{-j}).$ 
  Since all $x_i$ are contained in the
  commutator subgroup of $\pi_1(\knotexterior)$, $\mu^j x_i
  \mu^{-j}$ is also an element in the commutator subgroup.  Hence
  every $\mu^j x_i \mu^{-j}$ is sent to the identity matrix by $\rho$.
  Therefore $\Phi(\rho)$ is trivial. 
\end{proof}

\begin{remark}
  The involutions $\iota$ and $\hiota$ interchange slices as follows:
  \[
  \iota(R_{\xi(k/n)}(\knotexterior)) =
  R_{\xi(1-k/n)}(\knotexterior), \quad
  \hiota(S_{\xi(k/n)}(\knotexterior)) =
  S_{\xi(1-k/n)}(\knotexterior).
  \]
\end{remark}
We investigate the properties of the map $\Phi$ in the rest of this
section.  To consider the image of $\Phi$, we introduce the following
notion and notation concerning the action $\tau$ on $\pi_1(C_n)$.
\begin{definition}
  An $\SL$-representation $\rho$ of $\pi_1(C_n)$ is
  {\it $\tau$--equivariant} if there exists an element $C$ in $\SL$ such
  that the following diagram is commutative:
  \begin{equation}\label{diagram:equiv_diagram}
    \xymatrix{
      \pi_1(C_n) \ar[r]^{\tau} \ar[d]_{\rho} & \pi_1(C_n) \ar[d]^{\rho}\\
      \SL \ar[r]_{Ad_C} & \SL,
    }
  \end{equation}
  where $Ad_C(X):=CXC^{-1}$ for elements $C$ and $X$ in $\SL$.  We
  denote by $R^{\tau}(C_n)$ the set of $\tau$-equivariant
  $\SL$-representations.
\end{definition}

The $\tau$-equivariance of $\rho$ means that the pull--back of
$\rho$ by $\tau$ is expressed as the conjugation of an
$\SL$-element.

\begin{lemma}\label{lemma:uniqueness_C}
  If $\rho$ is irreducible and $\tau$-equivariant, 
then a matrix $C$ satisfying the diagram~$(\ref{diagram:equiv_diagram})$
is uniquely determined up to sign.
\end{lemma}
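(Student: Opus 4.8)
The plan is to reduce the uniqueness to Schur's lemma for the irreducible representation $\rho$. Suppose that $C_1, C_2 \in \SL$ both fit into the commutative diagram $(\ref{diagram:equiv_diagram})$, so that for every $\gamma \in \pi_1(C_n)$ we have $C_1\rho(\gamma)C_1^{-1} = \rho(\tau_n(\gamma)) = C_2\rho(\gamma)C_2^{-1}$. First I would rewrite this common identity as $(C_2^{-1}C_1)\,\rho(\gamma)\,(C_2^{-1}C_1)^{-1} = \rho(\gamma)$ for all $\gamma$, which exhibits $M := C_2^{-1}C_1$ as an element of $\SL$ commuting with the entire image $\rho(\pi_1(C_n))$.

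The heart of the argument is then to show that such an $M$ must be $\pm\I$. If $M$ were not scalar, I would pick an eigenvalue $\lambda$ of $M$; since $M$ is non-scalar, the corresponding eigenspace $L$ is a line, hence a proper nonzero subspace of $\C^2$. The commutation relation $M\rho(\gamma) = \rho(\gamma)M$ then gives $M(\rho(\gamma)v) = \lambda\,\rho(\gamma)v$ for every $v \in L$, so each $\rho(\gamma)$ preserves $L$. This contradicts the irreducibility of $\rho$, and therefore $M$ is scalar. Since $M \in \SL$ has determinant $1$, the scalar $\lambda$ satisfies $\lambda^2 = 1$, so $M = \pm\I$ and hence $C_1 = \pm C_2$.

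This is a routine centralizer computation, so I do not anticipate a genuine obstacle. The only point deserving care is the verification that the centralizer of an irreducible image inside $\SL$ is exactly the center $\{\pm\I\}$, which is precisely where both the irreducibility hypothesis and the determinant-one normalization of $\SL$ enter; the rest is formal manipulation of the equivariance diagram $(\ref{diagram:equiv_diagram})$.
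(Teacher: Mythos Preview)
Your proposal is correct and follows essentially the same approach as the paper, which simply cites Schur's lemma for $\rho$ and $\tau_n^*\rho$. You have just unpacked that citation: the passage from two choices $C_1,C_2$ to the centralizing element $M=C_2^{-1}C_1$, followed by the eigenspace argument forcing $M=\pm\I$, is precisely the content of Schur's lemma in this $\SL$ setting.
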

\begin{proof}
  It follows from Schur's lemma that for $\rho$ and $\tau^* \rho$
 the scalar matrices in $\SL$ are only $\pm \I$. 
\end{proof}

Similarly, we can also define $\tau$-equivariant representations for
$\pi_1(\branchedcoverZHS{n})$.  We denote by
$R^{\tau}(\branchedcoverZHS{n})$ the set of $\tau$-equivariant
representations of $\pi_1(\branchedcoverZHS{n})$.  
We use the same
symbol $\tau$ for the induced homomorphism on
$\pi_1(\branchedcoverZHS{n})$ by the next lemma.
\begin{lemma}\label{lemma:tau_equiv_Sigma_n}
  The following diagram is commutative:
  \begin{equation}\label{diagram:C2_Sigma2}
    \xymatrix{
      \pi_1(C_n) \ar[r]^{\tau} \ar[d]_{j_*} & \pi_1(C_n)\ar[d]^{j_*}\\
      \pi_1(\branchedcoverZHS{n}) \ar[r] &\pi_1(\branchedcoverZHS{n}).
    }
  \end{equation}
\end{lemma}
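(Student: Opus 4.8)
The plan is to exploit the presentation of $\pi_1(\Sigma_n)$ as the quotient $\pi_1(C_n)/|\langle \mu_n\rangle|$ furnished by the isomorphism \eqref{eqn:isom_C_n_Sigma2}, under which $\pi_n$ becomes the natural projection. Since both vertical arrows of the square are this projection and both horizontal arrows are induced by one and the same geometric datum, namely the covering transformation together with conjugation by the meridian lift, I expect commutativity to follow once I check two things: that $\tau_n$ on $\pi_1(C_n)$ descends to the quotient, and that the descended automorphism is exactly the $\tau_n$ that the covering transformation of the branched cover induces on $\pi_1(\Sigma_n)$.

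First I would verify that $\tau_n\colon \pi_1(C_n)\to\pi_1(C_n)$ preserves the kernel of $\pi_n$, namely the normal closure $|\langle\mu_n\rangle|$. This rests on the property $\tau_n(\mu_n)=\mu_n$ recorded in the remark following the definition of $\tau_n$. Because $\tau_n$ is an automorphism, applying it to a typical element $g^{-1}\mu_n^{\pm1}g$ of $|\langle\mu_n\rangle|$ (as described in Lemma~\ref{lemma:normal_closure}) yields $\tau_n(g)^{-1}\mu_n^{\pm1}\tau_n(g)$, again an element of $|\langle\mu_n\rangle|$; the same reasoning applied to $\tau_n^{-1}$ gives the reverse inclusion, so $\tau_n$ carries $|\langle\mu_n\rangle|$ onto itself. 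Consequently $\pi_n\circ\tau_n$ kills $\mu_n$, and Corollary~\ref{cor:induced_hom_amalgamated} produces a unique homomorphism $\overline{\tau_n}\colon\pi_1(\Sigma_n)\to\pi_1(\Sigma_n)$ with $\overline{\tau_n}\circ\pi_n=\pi_n\circ\tau_n$. This identity is precisely the asserted commutativity, provided $\overline{\tau_n}$ is the map drawn as $\tau_n$ on the bottom row.

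The remaining, and genuinely geometric, step is to identify $\overline{\tau_n}$ with the $\tau_n$ induced on $\pi_1(\Sigma_n)$ by the covering transformation of $\Sigma_n\to\Sigma$. Here I would use that $C_n$ sits inside $\Sigma_n$ as the complement of the lift of the branch locus, that $\pi_n$ is induced by this inclusion, and that the deck transformation $\tau$ of $C_n\to\Sigma_K$ is the restriction of the branched-cover deck transformation of $\Sigma_n\to\Sigma$, sharing the basepoint $b_n$ and the meridian lift $\widetilde{m}$. Since both copies of $\tau_n$ are defined by the very same recipe $[l]\mapsto[\widetilde{m}\cdot\tau(l)\cdot\widetilde{m}^{-1}]$ applied to these common data, naturality of $\pi_1$ under the inclusion forces $\pi_n([\widetilde{m}\cdot\tau(l)\cdot\widetilde{m}^{-1}])=[\widetilde{m}\cdot\tau(\pi_n l)\cdot\widetilde{m}^{-1}]$, which is exactly $\overline{\tau_n}=\tau_n$ on $\pi_1(\Sigma_n)$. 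I expect this identification to be the main obstacle: one must be careful that $\tau$ does not fix $b_n$, which is the reason conjugation by $\widetilde{m}$ enters, and verify that the path $\widetilde{m}$ and the transformation $\tau$ genuinely correspond under the inclusion, rather than settling for the algebraic descent alone, which by itself only guarantees that \emph{some} automorphism makes the square commute.
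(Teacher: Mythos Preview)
Your argument is correct and follows the paper's approach exactly: the paper's proof consists solely of your first step, observing that $\tau_n(\mu_n)=\mu_n$ together with Lemma~\ref{lemma:normal_closure} gives $\tau_n(|\langle\mu_n\rangle|)=|\langle\mu_n\rangle|$, so $\tau_n$ descends to the quotient $\pi_1(C_n)/|\langle\mu_n\rangle|\simeq\pi_1(\Sigma_n)$. Your second step, the explicit geometric identification of the descended automorphism with the branched-cover $\tau_n$ on $\pi_1(\Sigma_n)$, is more than the paper supplies; the paper leaves this implicit (indeed, the sentence preceding the lemma suggests the authors are essentially \emph{defining} the bottom $\tau_n$ via this descent), so your added care here is a genuine improvement in rigor rather than a deviation in strategy.
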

\begin{proof}
  The fundamental group $\pi_1(\branchedcoverZHS{n})$ is expressed as
  $\pi_1(C_n) / \nclosure{\mu_n}.$ Since $\tau$ is
  automorphism and $\tau(\mu_n) = \mu_n$ in $\pi_1(C_n)$.
  Note that $\nclosure{\mu_n}$ is expressed as
  $\{(g_1^{-1} \mu_n^{\epsilon_1} g_1) \cdots
  (g_k^{-1} \mu_n^{\epsilon_k} g_k) \,|\, k \geq 0, g_i \in \pi_1(C_n),
  \epsilon_i = \pm 1\, (1 \leq i \leq k) \}$.
  We have that 
  $\tau(\nclosure{\mu_n}) = \nclosure{\mu_n}$.  
  Hence $\tau$ induces an automorphism on the quotient group 
  $\pi_1(C_n) / \nclosure{\mu_n}$. 
\end{proof}
Moreover we can identify the set 
$R^{\tau}(\branchedcoverZHS{n})$
 with 
$\{\rho \in R^{\tau}(C_n)\,|\, \rho(\mu_n) = \I\}$
 by the
following Lemma~\ref{lemma:intersection_tau}.
\begin{lemma}\label{lemma:intersection_tau}
  The subset $\pullbackj(R^{\tau}(\branchedcoverZHS{n}))$ coincides
  with $\{\rho \in R^{\tau}(C_n)\,|\, \rho(\mu_n) = \I\}.$
\end{lemma}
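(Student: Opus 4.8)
The plan is to prove the equality of the two subsets of $R(C_n)$ by establishing both inclusions, relying on two facts already available: that $\pi_n$ is the quotient map with kernel $|\langle \mu_n \rangle|$ coming from the isomorphism $(\ref{eqn:isom_C_n_Sigma2})$, and that $\tau_n$ commutes with $\pi_n$ in the sense of the commutative diagram of Lemma~\ref{lemma:tau_equiv_Sigma_n}. The recurring theme will be that a single conjugating matrix $C \in \SL$ witnesses $\tau_n$-equivariance upstairs on $\pi_1(C_n)$ and downstairs on $\pi_1(\Sigma_n)$ at the same time, precisely because of this commutation.

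For the inclusion $\pi_n^*(R^{\tau_n}(\Sigma_n)) \subseteq \{\rho \in R^{\tau_n}(C_n) \,|\, \rho(\mu_n) = \I\}$, I would take $\sigma \in R^{\tau_n}(\Sigma_n)$ and set $\rho = \pi_n^*\sigma = \sigma \circ \pi_n$. Since $\mu_n \in \ker \pi_n$, one gets $\rho(\mu_n) = \sigma(1) = \I$ immediately. To see that $\rho$ is $\tau_n$-equivariant, I would take the matrix $C$ witnessing the equivariance of $\sigma$, so that $\sigma \circ \tau_n = Ad_C \circ \sigma$ on $\pi_1(\Sigma_n)$, and precompose with $\pi_n$: using $\pi_n \circ \tau_n = \tau_n \circ \pi_n$ from Lemma~\ref{lemma:tau_equiv_Sigma_n}, one computes $\rho \circ \tau_n = \sigma \circ \pi_n \circ \tau_n = \sigma \circ \tau_n \circ \pi_n = Ad_C \circ \sigma \circ \pi_n = Ad_C \circ \rho$. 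Hence $\rho$ lies in the right-hand set with the same $C$.

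For the reverse inclusion, I would start with $\rho \in R^{\tau_n}(C_n)$ satisfying $\rho(\mu_n) = \I$. Since $\ker \pi_n = |\langle \mu_n \rangle|$ and, by Lemma~\ref{lemma:normal_closure}, this normal closure is generated by conjugates of $\mu_n^{\pm 1}$, each such conjugate is sent by $\rho$ to $\I$; thus $\rho$ kills all of $|\langle \mu_n \rangle|$, and Corollary~\ref{cor:induced_hom_amalgamated} produces a unique $\sigma \in R(\Sigma_n)$ with $\rho = \pi_n^*\sigma$. It then remains to transfer the equivariance of $\rho$ down to $\sigma$: given the matrix $C$ with $\rho \circ \tau_n = Ad_C \circ \rho$ and any $\gamma \in \pi_1(\Sigma_n)$, I would choose a lift $\widehat{\gamma} \in \pi_1(C_n)$ and compute, using $\rho = \sigma \circ \pi_n$ together with $\pi_n \circ \tau_n = \tau_n \circ \pi_n$, that $\sigma(\tau_n(\gamma)) = \sigma(\pi_n(\tau_n(\widehat{\gamma}))) = \rho(\tau_n(\widehat{\gamma})) = Ad_C(\rho(\widehat{\gamma})) = Ad_C(\sigma(\gamma))$. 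This shows $\sigma \in R^{\tau_n}(\Sigma_n)$, and hence $\rho \in \pi_n^*(R^{\tau_n}(\Sigma_n))$.

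The only genuinely delicate point, and the step I would treat most carefully, is the bookkeeping of the conjugating matrix $C$ across the two group levels: one must verify that the \emph{same} $C$ works before and after passing through $\pi_n$, which is exactly what the commutativity in Lemma~\ref{lemma:tau_equiv_Sigma_n} guarantees. Everything else is a direct diagram chase, with the well-definedness of $\sigma$ resting on the factorization already packaged in Corollary~\ref{cor:induced_hom_amalgamated}.
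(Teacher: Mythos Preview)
Your proof is correct and follows essentially the same approach as the paper, which simply cites the commutative diagram of Lemma~\ref{lemma:tau_equiv_Sigma_n} as the reason the lemma holds. You have merely unpacked that one-line justification into the explicit two-inclusion argument, tracking the conjugating matrix $C$ through $\pi_n$ in both directions.
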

\begin{proof}
  This follows from the commutative
  diagram~$(\ref{diagram:C2_Sigma2})$ in
  Lemma~\ref{lemma:tau_equiv_Sigma_n}. 
\end{proof}
The conjugation on representation spaces has the following property.
\begin{lemma}\label{lemma:Ad_equiv}
  The conjugation for representations preserves $\tau$-equivariance.
\end{lemma}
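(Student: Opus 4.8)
The plan is to verify the definition of $\tau_n$-equivariance directly for a conjugated representation by exhibiting the required intertwining matrix explicitly. Suppose $\rho \in R^{\tau_n}(C_n)$ with associated element $C \in \SL$, so that the diagram $(\ref{diagram:equiv_diagram})$ commutes, i.e.\ $\rho(\tau_n(\gamma)) = C\rho(\gamma)C^{-1}$ for every $\gamma \in \pi_1(C_n)$. For an arbitrary $A \in \SL$ I want to show that the conjugate $Ad_A \circ \rho$ is again $\tau_n$-equivariant.

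First I would compute the pull--back of $Ad_A \circ \rho$ along $\tau_n$. For each $\gamma$,
$$(Ad_A \circ \rho)(\tau_n(\gamma)) = A\,\rho(\tau_n(\gamma))\,A^{-1} = AC\,\rho(\gamma)\,C^{-1}A^{-1},$$
where the second equality uses the $\tau_n$-equivariance of $\rho$. Inserting $A^{-1}A = \I$ both between $C$ and $\rho(\gamma)$ and between $\rho(\gamma)$ and $C^{-1}$, this regroups as
$$(ACA^{-1})\,\bigl(A\rho(\gamma)A^{-1}\bigr)\,(ACA^{-1})^{-1} = Ad_{ACA^{-1}}\bigl((Ad_A\circ\rho)(\gamma)\bigr).$$
Thus the diagram $(\ref{diagram:equiv_diagram})$ commutes for $Ad_A\circ\rho$ with intertwining element $C' = ACA^{-1}$, which again lies in $\SL$. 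Hence conjugation preserves $\tau_n$-equivariance, which is the assertion.

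I do not expect a genuine obstacle here: the statement is a formal consequence of the associativity of matrix multiplication together with the functoriality relation $Ad_A \circ Ad_C = Ad_{AC}$, and the same argument applies verbatim to $\tau_n$-equivariant representations of $\pi_1(\Sigma_n)$. The only point worth recording is the bookkeeping: the new intertwining matrix is the $A$-conjugate $C' = ACA^{-1}$ of the old one. In particular, for irreducible $\rho$ this is compatible with the sign ambiguity of $C$ from Lemma $\ref{lemma:uniqueness_C}$, so the formula $C' = ACA^{-1}$ is exactly what is needed later to track how the intertwiner transforms under the $\SL$-action by conjugation.
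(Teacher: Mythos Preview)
Your proof is correct and follows essentially the same approach as the paper: both verify the definition directly by computing $(A\rho A^{-1})(\tau_n(\gamma))$ and showing the new intertwining element is $ACA^{-1}$. The paper's proof is slightly more terse (stating the conjugated diagram without writing out the intermediate equalities), but the content is identical.
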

\begin{proof}
  Let $\rho$ be a $\tau$-equivariant representation of $\pi_1(C_n)$.
  There exists an element $C$ in $\SL$ such that the following diagram
  is commutative:
  \[
  \xymatrix{
    \pi_1(C_n) \ar[r]^{\tau} \ar[d]_{\rho} & \pi_1(C_n) \ar[d]^{\rho}\\
    \SL \ar[r]_{Ad_C} & \SL.  }
  \]
  Then for any $P$ in $\SL$, the representation $P \rho P^{-1}$
  satisfies that
  \[
  \xymatrix{
    \pi_1(C_n) \ar[r]^{\tau} \ar[d]_{P\rho P^{-1}} & \pi_1(C_n) \ar[d]^{P \rho P^{-1}}\\
    \SL \ar[r]_{Ad_{PCP^{-1}}} & \SL.  }
  \]
  Hence $P \rho P^{-1}$ is $\tau$-equivariant. 
\end{proof}

We will prove that $R^{\tau}(\branchedcoverZHS{n})$ coincides
with the image of $\Phi$.  To prove this, we use the identification 
$R^{\tau}(\branchedcoverZHS{n})
  = \{\rho \in R^{\tau}(C_n) \,|\, \rho(\mu_n) = \I \}$
in Lemma~\ref{lemma:intersection_tau} and the following lemma.
\begin{lemma}\label{lemma:Ad_C^n_action}
  Suppose that $\rho \in R^{\tau}(\branchedcoverZHS{n})$ and $C$ is
  a matrix as in the commutative
  diagram~$(\ref{diagram:equiv_diagram})$.  The adjoint action
  $Ad_{C^n}$ acts trivially on $\im \rho$.
\end{lemma}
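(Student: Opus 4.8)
The plan is to reduce the statement to two structural facts already established about $\tau_n$: the iteration rule $\tau_n^n(\gamma) = \mu_n\,\gamma\,\mu_n^{-1}$ recorded in the remark following the definition of $\tau_n$, and the identification of $R^{\tau_n}(\Sigma_n)$ with $\{\rho \in R^{\tau_n}(C_n)\,|\,\rho(\mu_n) = \I\}$ furnished by Lemma~\ref{lemma:intersection_tau}. The whole argument is then a short comparison of two ways of evaluating $\rho(\tau_n^n(\gamma))$.

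First I would regard $\rho$ as a $\tau_n$-equivariant representation of $\pi_1(C_n)$ satisfying $\rho(\mu_n) = \I$, which is legitimate by Lemma~\ref{lemma:intersection_tau}. The commutative diagram~$(\ref{diagram:equiv_diagram})$ says $\rho \circ \tau_n = Ad_C \circ \rho$ as maps $\pi_1(C_n) \to \SL$. Since $Ad$ is a homomorphism into the automorphisms of $\SL$, so that $Ad_C \circ Ad_C = Ad_{C^2}$ and so on, composing this identity with itself $n$ times yields $\rho \circ \tau_n^n = Ad_{C^n} \circ \rho$; that is, $\rho(\tau_n^n(\gamma)) = C^n \rho(\gamma) C^{-n}$ for every $\gamma \in \pi_1(C_n)$.

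Next I would compute the same quantity directly from the iteration rule $\tau_n^n(\gamma) = \mu_n\,\gamma\,\mu_n^{-1}$. Applying $\rho$ and using $\rho(\mu_n) = \I$ gives $\rho(\tau_n^n(\gamma)) = \rho(\mu_n)\rho(\gamma)\rho(\mu_n)^{-1} = \rho(\gamma)$. Comparing the two expressions for $\rho(\tau_n^n(\gamma))$ produces $C^n \rho(\gamma) C^{-n} = \rho(\gamma)$ for all $\gamma$, i.e. $Ad_{C^n}$ fixes every element of $\im\rho$, which is exactly the assertion.

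I do not expect a serious obstacle here; the proof is essentially formal. The only points that require care are the bookkeeping in iterating the equivariance, where one must verify that the same matrix $C$ occurs at each step so that $(Ad_C)^n = Ad_{C^n}$, and the correct placement of $\rho(\mu_n) = \I$ when expanding $\rho(\mu_n\,\gamma\,\mu_n^{-1})$. Both are guaranteed by the hypotheses together with Lemma~\ref{lemma:intersection_tau}.
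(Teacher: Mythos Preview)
Your proof is correct and follows essentially the same route as the paper's: iterate the equivariance $\rho\circ\tau_n = Ad_C\circ\rho$ to obtain $\rho\circ\tau_n^n = Ad_{C^n}\circ\rho$, then use $\tau_n^n(\gamma)=\mu_n\gamma\mu_n^{-1}$ together with $\rho(\mu_n)=\I$ (via the identification of Lemma~\ref{lemma:intersection_tau}) to conclude $Ad_{C^n}$ fixes $\im\rho$. The only difference is that you spell out the iteration and the appeal to Lemma~\ref{lemma:intersection_tau} a bit more explicitly than the paper does.
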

\begin{proof}
  The $\tau$-equvariance of $\rho$ gives us the equality that
  $\rho(\tau^i(\gamma)) = Ad_{C^i}(\rho(\gamma))$ for any $\gamma$
  in $\pi_1(C_n)$.  For the case of $i=n$, 
  since $\tau^n(\gamma) = \mu_n\gamma \mu_n^{-1}$ and $\rho(\mu_n)=\I$, 
  we have that
  $Ad_{C^n}(\rho(\gamma)) = \rho(\gamma)$ for any $\gamma$ in
  $\pi_1(C_n)$, i.e, $Ad_{C^n}$ acts trivially on $\im \rho$. 
\end{proof}

We next show the following property on $\pullbackp$ to study the image of $\Phi$.
\begin{lemma}\label{lemma:pullback_R(Sigma_K)}
  For every $\rho$ in $R(\knotexterior)$, the image of 
  $\pullbackp \co R(\knotexterior) \to R(C_n)$ is contained in $R^{\tau}(C_n)$.
\end{lemma}
\begin{proof}
  Let $\gamma$ be an element in $\pi_1(C_n)$.  By the following direct
  calculation, it follows that $\pullbackp \rho$ is
  $\tau$-equivariant:
  \begin{align*}
    \pullbackp \rho (\tau(\gamma))
    &= \rho (\homPiICnEk(\tau(\gamma))) \\
    &= \rho (\mu \cdot \homPiICnEk(\gamma) \cdot \mu^{-1}) \\
    &= Ad_{\rho(\mu)} (\pullbackp \rho(\gamma)).
  \end{align*} 
\end{proof}
In fact, the map $p^*$ gives the following equality as sets:
\begin{proposition}\label{prop:image_Phi}
  $ \im \Phi = R^{\tau}(\branchedcoverZHS{n}). $
\end{proposition}
\begin{proof}
  For any representation $\rho$ in 
  $D(\Phi) \cap R_{\xi(k/n)}(\knotexterior)$ and 
  each $\gamma$ in $\pi_1(C_n)$, 
  the matrix $\Phi(\rho)(\tau(\gamma))$ is given by
  \begin{align*}
    \Phi(\rho)(\tau(\gamma))
    &= ( (\zeta_{2n})^{p_*[\tau(\gamma)]} )^k \cdot \pullbackp \rho(\tau(\gamma)) \\
    &= ( (\zeta_{2n})^{p_*[\gamma]} )^k \cdot Ad_{\rho(\mu)}(\pullbackp \rho(\gamma))
       \quad {(\rm by\, Lemma~\ref{lemma:pullback_R(Sigma_K)})}\\
    &= Ad_{\rho(\mu)}(\Phi(\rho)(\gamma)).
  \end{align*}
  Hence the $\SL$-representation $\Phi(\rho)$ is
  $\tau$-equivalent. 
  It is left to show the
  inclusion $\im \Phi \supset R^{\tau}(\branchedcoverZHS{n})$. 
  We take $\rho$ in
  $R^{\tau}(\branchedcoverZHS{n})$ and let $C$ be a matrix as in the
  commutative diagram $(\ref{diagram:equiv_diagram})$.  We use a
  presentation of $\pi_1(C_n)$ induced from a Lin presentation of
  $\pi_1(\knotexterior)$ as in
  Lemma~\ref{lemma:presentation_for_C_n}.
  The relations of $\pi_1(C_n)$ give the equalities:
  \[
  \rho(\mu_n \widetilde{\alpha}^{(0)}_i \mu_n^{-1}) =
  \rho(\widetilde{\beta}_i^{(n-1)}).
  \]
  The left hand side turns into $\rho(\widetilde{\alpha}^{(0)}_i)$.
  Since the element $\widetilde{\beta}_i^{(n-1)}$ can be written as
  $\tau^{n-1}(\widetilde{\beta}_i^{(0)})$, the right hand
  side $\rho(\widetilde{\beta}_i^{(n-1)})$ turns into
  $Ad_{C^{n-1}}(\rho(\widetilde{\beta}_i^{(0)}))$.  By
  Lemma~\ref{lemma:Ad_C^n_action}, $Ad_{C^n}$ acts trivially on $\im
  \rho$.  Hence we have the equalities
  \[
  C \rho(\widetilde{\alpha}^{(0)}_i) C^{-1}=
  \rho(\widetilde{\beta}_i^{(0)}).
  \]
  To make a representation $\rho_0$ of $\pi_1(\knotexterior)$ 
  satisfying $\Phi(\rho_0)=\rho$, we set
  $\rho_0 (\mu) = C$ and $\rho_0(x_i) = \rho(\widetilde{x}_i)$.  The
  relations 
  $\rho_0(\mu \alpha_i \mu^{-1}) = C \rho(\widetilde{\alpha}^{(0)}_i) C^{-1}$ 
  and 
  $\rho_0 (\beta_i) = \rho(\widetilde{\beta}_i^{(0)})$ show that
  \[\rho_0(\mu \alpha_i \mu^{-1}) = \rho_0(\beta_i)\] for all $i$.  
  Hence $\rho_0$ is an $\SL$-representation of $\pi_1(\knotexterior)$.  
  We must show that $\rho_0$ is contained in $D(\Phi)$.  
  By the following Lemma~\ref{lemma:on_value_C}, 
  there exists an integer $k$ such that
  $\rho_0$ is contained in $R_{\xi(k/n)}(\knotexterior)$. 
\end{proof}

\begin{lemma}\label{lemma:on_value_C}
  If $\rho$ is a nontrivial representation in
  $R^{\tau}(\branchedcoverZHS{n})$ and $C$ is an $\SL$ element as in
  the diagram~$(\ref{diagram:equiv_diagram})$, then $C^n = \pm \I$.
\end{lemma}
\begin{proof}
  We regard $\rho$ as an element in $R^{\tau}(C_n)$ satisfying
  $\rho(\mu_n)=\I$.  By Lemma~\ref{lemma:Ad_C^n_action}, the adjoint
  action $Ad_{C^n}$ acts trivially on $\im \rho$, i.e., $C^n$ commutes
  with all elements in $\im \rho \subset \SL$.  We suppose that $C^n$
  is not contained in the center $\{\pm \I\}$ in $\SL$.  This says
  that $C$ and $\im \rho$ are contained in the same maximal abelian
  subgroup in $\SL$.  Since $\rho$ is $\tau$-equivariant and $C$
  commutes with all elements of $\im \rho$, we have that
  $\rho(\tau(\gamma)) = Ad_C(\rho(\gamma)) = \rho(\gamma)$ for any
  $\gamma$ in $\pi_1(C_n)$.  Then $\rho$ turns into the pull--back of
  abelian representation of $\pi_1(\knotexterior)$  given by the
  following corresponding:
  \[
  \mu \mapsto C, \quad x_i \mapsto \rho(\widetilde{x}_i).
  \]
  for Lin presentations of $\pi_1(\knotexterior)$ and $\pi_1(C_n)$.
  However every abelian representation of $\pi_1(\knotexterior)$
  sends $x_i$ to $\I$ since $x_i$ is an element in the commutator
  subgroup.  Thus $\rho$ is trivial but this contradicts to our
  assumption that $\rho$ is nontrivial. 
\end{proof}

Now, Proposition \ref{prop:image_Phi} can be naturally extended to
the set of characters as follows:
\begin{proposition}
  The image of $\hPhi$ coincides with the fixed point set 
  $X(\branchedcoverZHS{n})^{\tau}$ of the
  action $\tau^*$ on $X(\branchedcoverZHS{n})$.
\end{proposition}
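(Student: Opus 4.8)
The plan is to derive the statement directly from Proposition~\ref{prop:image_Phi_n} by passing to characters. Since $\hPhi_n(\chi_\rho) = \chi_{\Phi_n(\rho)}$, we have $\hPhi_n \circ t = t \circ \Phi_n$ on the domain, so the equality $\im \Phi_n = R^{\tau_n}(\Sigma_n)$ of Proposition~\ref{prop:image_Phi_n} yields $\im \hPhi_n = t(R^{\tau_n}(\Sigma_n))$. Thus it suffices to prove that the image under $t$ of the $\tau_n$-equivariant representations coincides with the fixed point set $X(\Sigma_n)^{\tau_n^*}$, where the action is $\tau_n^*(\chi_\rho) = \chi_{\rho \circ \tau_n}$; this action is well defined on $X(\Sigma_n)$ because $\tau_n$ is an automorphism of $\pi_1(\Sigma_n)$ by Lemma~\ref{lemma:tau_equiv_Sigma_n}. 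I would establish the two inclusions separately.

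For $\im \hPhi_n \subset X(\Sigma_n)^{\tau_n^*}$, I would take $\rho \in R^{\tau_n}(\Sigma_n)$ with equivariance matrix $C$ as in the commutative diagram~$(\ref{diagram:equiv_diagram})$, so that $\rho \circ \tau_n = Ad_C \circ \rho$. Then for every $\gamma$,
$$
\chi_{\rho \circ \tau_n}(\gamma) = \tr(C \rho(\gamma) C^{-1}) = \tr \rho(\gamma) = \chi_\rho(\gamma),
$$
hence $\tau_n^*(\chi_\rho) = \chi_\rho$. This is the routine direction.

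For the reverse inclusion I would take a character $\chi = \chi_\rho$ with $\chi_{\rho \circ \tau_n} = \chi_\rho$ and produce a $\tau_n$-equivariant representative. When $\rho$ is irreducible, $\rho \circ \tau_n$ is also irreducible with the same character, so by~\cite[Proposition 1.5.2]{CS:1983} there is $C \in \SL$ with $\rho \circ \tau_n = Ad_C \circ \rho$; thus $\rho$ already lies in $R^{\tau_n}(\Sigma_n)$ and $\chi \in t(R^{\tau_n}(\Sigma_n))$. When $\rho$ is reducible, I would replace it by the abelian diagonalised representation $\rho^{\mathrm{ab}}$ with the same character, writing $\rho^{\mathrm{ab}}(\gamma) = \mathrm{diag}(\phi(\gamma), \phi(\gamma)^{-1})$ for a homomorphism $\phi : H_1(\Sigma_n;\Z) \to \C^*$. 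The fixed-point condition forces $\phi(\gamma) + \phi(\gamma)^{-1} = \phi(\tau_n \gamma) + \phi(\tau_n \gamma)^{-1}$ for all $\gamma$, so for each $\gamma$ either $\phi(\tau_n \gamma) = \phi(\gamma)$ or $\phi(\tau_n \gamma) = \phi(\gamma)^{-1}$. The two loci are the kernels of the homomorphisms $(\phi \circ \tau_n)\phi^{-1}$ and $(\phi \circ \tau_n)\phi$, hence subgroups of $H_1(\Sigma_n;\Z)$ whose union is the whole group; since a group is never the union of two proper subgroups, one of the two equalities holds identically. In the first case $\rho^{\mathrm{ab}}$ is $\tau_n$-equivariant with $C = \I$, and in the second case $\rho^{\mathrm{ab}} \circ \tau_n = Ad_{\bm{j}} \circ \rho^{\mathrm{ab}}$, so it is $\tau_n$-equivariant with $C = \bm{j}$. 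Either way $\chi \in t(R^{\tau_n}(\Sigma_n))$.

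The main obstacle is the reducible case: because equality of characters does not imply conjugacy for reducible representations, I cannot invoke~\cite[Proposition 1.5.2]{CS:1983} there, and must instead pass to the semisimplification and verify equivariance through the elementary subgroup argument above. The equivariant-to-fixed inclusion and the irreducible case are immediate once the character identity $\chi_{\rho \circ \tau_n} = \chi_\rho$ is rewritten in terms of the conjugating matrix.
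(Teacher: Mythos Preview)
Your argument is correct and follows the same overall architecture as the paper: reduce via Proposition~\ref{prop:image_Phi_n} to showing $t(R^{\tau_n}(\Sigma_n)) = X(\Sigma_n)^{\tau_n^*}$, dispatch the forward inclusion by the trace identity, handle irreducible fixed characters by~\cite[Proposition 1.5.2]{CS:1983}, and treat reducible fixed characters by passing to the diagonal abelian representative.

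The genuine difference is in the reducible case. The paper works with the explicit generators $\tau_n^j(\wtilde{x}_i)$ from Lin's presentation: the fixed-point condition forces $\rho(\tau_n^j(\wtilde{x}_i)) = \rho(\wtilde{x}_i)^{\epsilon_j}$ with $\epsilon_j = \pm 1$, and then the homology relations $-U\bm{x}_j + V\bm{x}_{j+1} = \bm{0}$ of Lemma~\ref{lemma:presentation_for_Sigma_n} together with $\det(V-U) = \Delta_K(1) = \pm 1$ are used to show that if two consecutive signs agree then $\rho$ is trivial, while if they alternate then $C = \bm{j}$ witnesses equivariance. Your route bypasses Lin's presentation entirely: you observe that the loci $\{\phi\circ\tau_n = \phi\}$ and $\{\phi\circ\tau_n = \phi^{-1}\}$ are the kernels of $(\phi\circ\tau_n)\phi^{-1}$ and $(\phi\circ\tau_n)\phi$ on $H_1(\Sigma_n;\Z)$, and invoke the elementary fact that a group is never the union of two proper subgroups, yielding $C = \I$ or $C = \bm{j}$. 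Your argument is shorter, uses no knot-theoretic input (Seifert matrix, Alexander polynomial), and would apply verbatim to the abelian characters under any automorphism of any group. What the paper's computation buys in exchange is the extra information that the $C = \I$ branch in fact occurs only for the trivial representation; your argument does not need this, but it is a slightly sharper conclusion.
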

\begin{proof}
  It follows by definition that the image $\im \hPhi =
  t(R^{\tau}(\branchedcoverZHS{n}))$ is contained in
  $X(\branchedcoverZHS{n})^{\tau}$.  We need to prove that every fixed point
  of $\tau^*$ in $X(\branchedcoverZHS{n})$ is given by the character
  of a $\tau$-equivariant representation.

  First we consider the character 
  $\chi_\rho \in X(\branchedcoverZHS{n})^{\tau}$ associated to 
  an irreducible representation $\rho$ such that it is fixed by $\tau^*$, 
  i.e.,
  $\tau^* (\chi_\rho) = \chi_{\tau^* \rho} = \chi_{\rho}$.  Since
  $\rho$ is irreducible, the representation $\rho$ is conjugate to
  $\tau^* \rho$.  This says that $\rho$ is $\tau$--equivariant.

  Next we consider the case that 
  $\chi_\rho \in X(\branchedcoverZHS{n})^{\tau}$ is 
  a reducible character such that $\tau^*(\chi_\rho) = \chi_\rho$.  
  For every reducible representation there exists an abelian representation of
  $\pi_1(\branchedcoverZHS{n})$ such that it has the same character as
  that of the reducible one and the image is contained in the maximal
  abelian subgroup ${\rm Hyp}$.  So we can assume without loss of
  generality that $\im \rho$ is contained in ${\rm Hyp}$.  By using a
  presentation as in Lemma~\ref{lemma:presentation_for_Sigma_n} and
  $\tau^*(\chi_\rho) = \chi_\rho$, we have the equality $\trace
  \rho(\tau^{j}(\wtilde{x}_i)) = \trace \rho({\wtilde{x}_i})$.
  Hence $\rho(\tau^j(\wtilde{x}_i))$ is either $\rho(\wtilde{x}_i)$
  or $\rho(\wtilde{x}_i)^{-1}$.  Set $\rho(\wtilde{x}_i)$ and
  $\rho(\tau^j(\wtilde{x}_i))$ as
  \[
  \rho(\wtilde{x}_i) = \left(
    \begin{array}{cc}
      r_i e^{\sqrt{-1}\theta_i} & 0 \\
      0 & r_i^{-1} e^{-\sqrt{-1}\theta_i}
    \end{array}
  \right), \quad \rho(\tau^j(\wtilde{x}_i)) =
  \rho(\wtilde{x}_i)^{\epsilon_j} \quad (\epsilon_j = \pm 1).
  \]
  Since $\rho$ factors through $H_1(\branchedcoverZHS{n};\Z)$, the
  relation $-U\bm{x_j} + V\bm{x}_{j+1}$ in
  $H_1(\branchedcoverZHS{n};\Z)$ gives us the following relation:
  \begin{equation}\label{eqn:relation_j_j+1}
    (-\epsilon_j U + \epsilon_{j+1} V)
      \begin{pmatrix}
        \log r_1 \\
        \vdots\\
        \log r_{2g}
      \end{pmatrix}
     = \bm{0},\
    (-\epsilon_j U + \epsilon_{j+1} V)
      \begin{pmatrix}
        \theta_1 \\
        \vdots\\
        \theta_{2g}
      \end{pmatrix}
     \equiv \bm{0}\,\ ({\rm mod}\,2\pi\Z).
  \end{equation}
  If there exists some $j$ such that $\epsilon_{j+1} = \epsilon_j$,
  then it follows by the relation~$(\ref{eqn:relation_j_j+1})$ and
  $\det (-U+V) = \Delta_K(1) = \pm 1$ that $\rho(\wtilde{x}_i)=\I$ for
  all $i$, i.e, $\rho$ is trivial.  
  If the equality $\epsilon_{j+1} = - \epsilon_j$ holds for all $j$, 
  then we have $\tau^* \rho = C \rho C^{-1}$ 
  where 
  $C = \left(\begin{array}{cc}
      0 & 1 \\
      -1 & 0
    \end{array}
  \right)$.  Thus $\rho$ is $\tau$-equivariant in the both case.
  Therefore $\im \hPhi$ coincides with the fixed point set
  $X(\branchedcoverZHS{n})^{\tau}$. 
\end{proof}

In the case of $\SU$-representations, such a correspondence like
$\Phi$ has been considered by Collin and Saveliev
in~\cite{CollinSaveliev01}. They deal with irreducible
$\SU$-representations mainly.  Here we also draw attention to reducible
representations of $\pi_1(\branchedcoverZHS{n})$.  Lin presentations
lead us to the property of $\Phi$ for reducible representations.

\begin{proposition}\label{prop:reducible_abelian}
  In the image of $\Phi$, all reducible representations are abelian,
  i.e.,
  \begin{equation}\label{eqn:reducible_abelian}
    \im \Phi \cap R^{\mathrm{red}}(\branchedcoverZHS{n}) = 
    \im \Phi \cap R^{\mathrm{ab}}(\branchedcoverZHS{n}).
  \end{equation}
\end{proposition}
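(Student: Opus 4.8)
The plan is to establish the nontrivial inclusion $\im \Phi_n \cap R^{\mathrm{red}}(\Sigma_n) \subset \im \Phi_n \cap R^{\mathrm{ab}}(\Sigma_n)$; the reverse inclusion holds automatically because every abelian representation is reducible. So I take a reducible representation $\sigma = \Phi_n(\rho)$ with $\rho \in R_{\xi(k/n)}(\Sigma_K) \cap D(\Phi_n)$, and I aim to show that $\sigma$ is in fact abelian.

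The first step is to identify $\im \sigma$ with the image under $\rho$ of the commutator subgroup of $\pi_1(\Sigma_K)$. Using the presentation of $\pi_1(\Sigma_n)$ in Lemma~\ref{lemma:presentation_for_Sigma_n}, the image $\im \sigma$ is generated by the matrices $\sigma(\tau_n^j(\wtilde{x}_i))$. Since each $x_i$ lies in the commutator subgroup, the homomorphism $\alpha_{\zeta_{2n}}$ is trivial on every $\tau_n^j(\wtilde{x}_i)$, so exactly as in the proof of Lemma~\ref{lemma:image_abelian} one gets $\sigma(\tau_n^j(\wtilde{x}_i)) = \rho(\mu^j x_i \mu^{-j})$. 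Because $\rho \in D(\Phi_n)$ satisfies $\rho(\mu)^n = \pm \I$, which is central, these matrices are periodic in $j$ modulo $n$, and the elements $\{\mu^j x_i \mu^{-j}\}_{j \in \Z}$ generate $[\pi_1(\Sigma_K),\pi_1(\Sigma_K)]$. Hence $\im \sigma = \rho([\pi_1(\Sigma_K),\pi_1(\Sigma_K)])$, and therefore $\sigma$ is abelian precisely when $\rho$ is metabelian.

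The second step is to deduce that $\rho$ is metabelian from the reducibility of $\sigma$. Reducibility of $\sigma$ means the subgroup $\rho([\pi_1(\Sigma_K),\pi_1(\Sigma_K)])$ admits a common invariant line. If that subgroup is abelian, then $\rho$ is metabelian and we are done. Otherwise it is a non-abelian reducible subgroup of $\SL$; after conjugation it sits in the upper-triangular subgroup, and being non-abelian it contains a nontrivial unipotent element, so $\langle e_1 \rangle$ is its \emph{unique} invariant line. Since the commutator subgroup is normal, $\rho(\mu)$ normalizes $\rho([\pi_1(\Sigma_K),\pi_1(\Sigma_K)])$ and hence must preserve this unique invariant line, so $\rho(\mu)$ is upper-triangular as well. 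As $\pi_1(\Sigma_K)$ is generated by $\mu$ together with its commutator subgroup, this forces $\rho$ itself to be reducible, whence $\rho$ is metabelian by Remark~\ref{remark:metabelian}; but then $\rho([\pi_1(\Sigma_K),\pi_1(\Sigma_K)])$ is abelian, contradicting the assumption of this case. Thus only the abelian case survives, and $\sigma$ is abelian.

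The main obstacle is this second step, namely ruling out a non-abelian reducible image. The decisive point is the uniqueness of the invariant line of a non-abelian reducible subgroup: it lets one propagate reducibility from $\rho$ on the commutator subgroup back to $\rho(\mu)$, and hence to all of $\rho$, after which the paper's fact that reducible representations are metabelian (Remark~\ref{remark:metabelian}) closes the loop. By comparison, the first step is essentially bookkeeping with the $\tau_n$-action and the centrality of $\rho(\mu)^n = \pm \I$.
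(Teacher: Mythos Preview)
Your proof is correct and takes a genuinely different route from the paper's.

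The paper invokes Proposition~\ref{prop:image_Phi_n} to identify $\im\Phi_n$ with $R^{\tau_n}(\Sigma_n)$ and then works entirely on the $\Sigma_n$ side: given a reducible $\tau_n$-equivariant $\rho$ with equivariance matrix $C$, it puts all $\rho(\tau_n^j(\wtilde{x}_i))$ in upper-triangular form, picks one with diagonal entry $\alpha_{i,j}\neq\pm 1$, and performs an explicit case analysis on the entries of $C$ (the cases $c=0$ versus $d=0$) to force the image to be diagonal, hence abelian. The contradiction in the $c=0$ case is obtained by building a reducible representation $\rho_0$ of $\pi_1(\Sigma_K)$ from $C$ and the $\rho(\wtilde{x}_i)$, and observing that reducibility forces trace $\pm 2$ on commutator-subgroup elements.

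Your argument stays on the $\Sigma_K$ side. The key identification $\im\sigma=\rho\big([\pi_1(\Sigma_K),\pi_1(\Sigma_K)]\big)$ replaces the paper's use of $\tau_n$-equivariance, and your uniqueness-of-invariant-line observation replaces the entry-by-entry analysis of $C$: normality of the commutator subgroup forces $\rho(\mu)$ to fix the unique invariant line, whence $\rho$ is reducible, whence metabelian by Remark~\ref{remark:metabelian}, contradicting non-abelianness of $\im\sigma$. This is shorter and more conceptual; it also makes the equivalence ``$\sigma$ abelian $\Leftrightarrow$ $\rho$ metabelian'' explicit, which is exactly what is needed for Proposition~\ref{prop:preimage_red}. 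The paper's approach, by contrast, is self-contained on the branched-cover side and does not require knowing that $\{\mu^j x_i\mu^{-j}\}_{j\in\Z}$ generates the commutator subgroup (a standard fact you use implicitly, coming from the infinite cyclic cover).
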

\begin{proof}
  It is obvious that any abelian representation is
  a reducible one.  We must show
  that the left hand side of Eq.~$(\ref{eqn:reducible_abelian})$ is
  contained in the right hand side.

  By Proposition~\ref{prop:image_Phi}, $\im
  \Phi \cap R^{\mathrm{red}}(\branchedcoverZHS{n})$ coincides with
  $R^{\tau}(\branchedcoverZHS{n}) \cap R^{\mathrm{red}}(C_n)$ in
  $R(C_n)$.  So we can assume that $\rho$ is an element in
  $R^{\mathrm{red}}(C_n)$ such that $\rho(\mu_n) = \I$ and $C$ is an
  $\SL$-element satisfying $\tau^*\rho = Ad_{C}(\rho)$.  We use the
  presentation of $\pi_1(C_n)$ as in
  Lemma~\ref{lemma:presentation_for_C_n}.  Since $\rho$ is reducible,
  by taking conjugation, we can assume that
  \[
  \rho(\tau^j(\wtilde{x}_i)) =\left(
    \begin{array}{cc}
      \alpha_{i, j} & \beta_{i, j} \\
      0 & \alpha_{i, j}^{-1}
    \end{array}
  \right)
  \]
  for all $i$ and $j$.  If $\alpha_{i, j}$ is $\pm 1$ for all $i$ and
  $j$, then $\im \rho$ is contained in the maximal abelian subgroup
  ${\rm Para}$.  Hence $\rho$ is abelian.  

  We consider the case that there exists a pair $(i, j)$ with
  $\alpha_{i, j} \not = \pm 1$. 
  We can assume that $\rho(\tau^j(\wtilde{x}_i))$ is a diagonal matrix
  by taking conjugation of an upper triangular matrix.  Since $\rho$
  is $\tau$-equivariant and reducible,
  $\rho(\tau^{j+1}(\widetilde{x}_i)) =
  C\rho(\tau^{j}(\widetilde{x}_i))C^{-1}$ is also an upper triangular
  matrix.  Writing $C=\begin{pmatrix}
    a & b \\
    c & d
     \end{pmatrix}$, 
  we have that $cd = 0$.  If $c$ were zero,
  then we would have the following contradiction.  When we set
  $\rho_0(\mu) = C$ and $\rho_0(x_i) = \rho(\wtilde{x}_i)$, the
  correspondence $\rho_0$ defines an $\SL$-representation of
  $\pi_1(\knotexterior)$.  Moreover since the image of $\rho_0$
  consists of upper triangular matrices, the representation $\rho_0$
  is reducible.  Hence for every commutator $\gamma$ in
  $\pi_1(\knotexterior)$, the trace of $\rho_0(\gamma)$ is $\pm
  2$. Applying this to the commutator $x_i$, we have that $\trace
  \rho_0(x_i) = \alpha_{i, j} + \alpha_{i, j}^{-1} = \pm 2$. This is a
  contradiction to our assumption that $\alpha_{i, j} \not = \pm 1$.

  Now we have  $d=0$.  Furthermore since
  $\rho(\tau^{j+h}(\wtilde{x}_i)) = C^h\rho(\tau^j(\wtilde{x}_i))C^{-h}$ 
  must be upper triangular, it follows that the matrix $C$ turns into
  $\begin{pmatrix}
      0 & b \\
      -b^{-1} & 0
    \end{pmatrix}$.
  By reducibility of $\rho$, the matrix $C \rho(\tau^l(\wtilde{x}_k)) C^{-1}$
  is also an upper triangular matrix. Therefore every $\beta_{k, l}$ must be zero. 
  Then the image of $\rho$ is contained in the maximal abelian subgroup ${\rm Hyp}$,
  i.e., $\rho$ is also abelian in this case. This completes the
  proof. 
\end{proof}

\begin{proposition}\label{prop:preimage_red}
  The preimage $\Phi^{-1}(R^{\mathrm{red}}(\branchedcoverZHS{n}))$
 is expressed as
  \[
  \Phi^{-1}(R^{\mathrm{red}}(\branchedcoverZHS{n})) =
  R^{\mathrm{meta}}(\knotexterior) \cap D(\Phi).
  \]
\end{proposition}
\begin{proof}
  By Proposition \ref{prop:reducible_abelian}, it is enough to show
  that the preimage
  $\Phi^{-1}(R^{\mathrm{ab}}(\branchedcoverZHS{n}))$ coincides with
  $R^{\mathrm{meta}}(\knotexterior) \cap D(\Phi)$.  Let $\rho$ be
  an element in $\Phi^{-1}(R^{\mathrm{ab}}(\branchedcoverZHS{n}))$.
  By taking conjugation, we can assume that the image $\Phi(\rho)$
  is contained in either the maximal abelian subgroups ${\rm Hyp}$ or
  ${\rm Para}$.  We choose a Lin presentation of
  $\pi_1(\knotexterior)$ as in Lemma~\ref{lemma:Lin_presentation}.
  In the case that $\im \Phi(\rho)$ is contained in ${\rm Hyp}$, for
  all $i$ the matrix $\rho(x_i)$ is expressed as
  \[
  \rho(x_i) = \left(
    \begin{array}{cc}
      \alpha_i & 0 \\
      0 & \alpha_i^{-1}
    \end{array}
  \right).
  \]
  If all $\alpha_i$ are $\pm 1$, then $\rho$ is abelian.  We consider
  the case that there exist some $\alpha_i$ such that 
  $\alpha_i \not = \pm 1$.  Since 
  $\Phi(\rho)(\tau(\wtilde{x}_i)) = \pm \rho(\mu)\rho(x_i)\rho(\mu)^{-1}$ 
  and $\Phi(\rho)(\tau(\wtilde{x}_i))$ is also upper triangular, 
  the matrix $\rho(\mu)$ is expressed as either
  \[
  \left(
    \begin{array}{cc}
      a & 0 \\
      0 & a^{-1}
    \end{array}
  \right) \quad {\text or} \quad 
  \left(
    \begin{array}{cc}
      0 & b \\
      -b^{-1} & 0
    \end{array}
  \right).
  \]
  In both cases, the representation $\rho$ turns into metabelian
  (refer to Eq.~$(\ref{eqn:explicit_binary_dihedral}$)).

  In the case that $\im \Phi(\rho)$ is contained in ${\rm Para}$,
  for all $i$, $\rho(x_i)$ is expressed as
  \[
  \rho(x_i) = \pm \left(
    \begin{array}{cc}
      1 & \beta_i \\
      0 & 1
    \end{array}
  \right).
  \]
  If all $\beta_i$ are zero, then $\rho$ is abelian.  We consider the
  case that there exists some non--zero $\beta_i$.  Since
  $\Phi(\rho)(\tau(\wtilde{x}_i)) = \pm \rho(\mu)\rho(x_i)\rho(\mu)$ 
  is also a parabolic element, the matrix
  $\rho(\mu)$ must be upper triangular.  Therefore $\rho$ is
  reducible, in particular, it is metabelian.

  On the other hand, by the proof
  of Proposition~\ref{prop:meta_conj_dihedral} and 
  Remark~\ref{remark:metabelian}, every metabelian
  representation is conjugate to either
  \[
  \rho(x_i) = \left(
    \begin{array}{cc}
      \alpha_i & 0 \\
      0 & \alpha_i^{-1}
    \end{array}
  \right),\quad \rho(\mu) = \left(
    \begin{array}{cc}
      0 & 1 \\
      -1 & 0
    \end{array}
  \right)
  \]
  or
  \[
  \rho(x_i) = \left(
    \begin{array}{cc}
      1 & \omega_i \\
      0 & 1
    \end{array}
  \right),\quad \rho(\mu) = \left(
    \begin{array}{cc}
      \lambda & \beta \\
      0 & \lambda^{-1}
    \end{array}
  \right).
  \]
  It follows from the definition of $\Phi$ that all metabelian
  representations in $D(\Phi)$ are sent to abelian ones of
  $\pi_1(\branchedcoverZHS{n})$. 
\end{proof}

\section{On the geometry of the slice $S_0(\knotexterior)$}
\label{section:slice_S0}
In the subsequent section, we focus on knots in $S^3$ mainly 
to see the interesting structure of the slice $S_0(\knotexterior)$ 
through the map $\hPhi$. 
As described below, when $\hPhi$ is surjective, it describes the slice 
$S_0(\knotexterior)$ as the $2$--fold branched cover 
over the $\SL$-character variety of $B_2$
with branched locus given by the abelian characters, whose preimage is
precisely the set of metabelian characters.
Note that the results in this section
can be naturally extended to the case of an integral homology
$3$--sphere with some modifications.

First we describe the representation spaces $R_0(\knotexterior)$ and
$R^{\tau}(\branchedcoverZHS{2})$ in detail.

\begin{lemma}\label{lemma:red_R0}
  If $\rho \in R_0(\knotexterior)$, then $\rho(\mu)^2 = -\I$.
  Therefore the domain of $\PhiII$ is given by the union
  $R_0(\knotexterior) \cup R(S^3) \cup \iota(R(S^3)) =
  R_0(\knotexterior) \cup \{{\rm trivial\, rep.}\} \cup \{\iota({\rm trivial\, rep.})\}$.
  Moreover the image of $\Phi$ coincides with that of the
  restriction on $R_0(\knotexterior)$.
\end{lemma}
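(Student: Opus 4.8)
The plan is to prove the three assertions in order: the first by a one-line eigenvalue computation, the second by unwinding the description of $D(\Phi_2)$ given in Remark~\ref{remark:domain}, and the third by combining this description with Lemma~\ref{lemma:image_abelian}. Throughout I specialize $\Sigma = S^3$, so $\Sigma_K = E_K$ and $\pi_1(S^3)$ is trivial.

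First I would establish $\rho(\mu)^2 = -\I$ for $\rho \in R_0(E_K)$. By definition of the slice $\tr \rho(\mu) = 0$, and since $\rho(\mu) \in \SL$ we have $\det \rho(\mu) = 1$; hence the characteristic polynomial of $\rho(\mu)$ is $x^2 + 1$, and the Cayley--Hamilton theorem gives $\rho(\mu)^2 + \I = \bm{0}$, that is, $\rho(\mu)^2 = -\I$. Next I would deduce the description of the domain. Specializing Remark~\ref{remark:domain} to $n=2$ and using $\xi(1/2) = 2\cos(\pi/2) = 0$, the domain is $\{\rho \in R_0(E_K) \mid \rho(\mu)^2 = \pm\I\} \cup R(S^3) \cup \iota(R(S^3))$. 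The first assertion shows that $\rho(\mu)^2 = -\I$ holds on \emph{all} of $R_0(E_K)$, so the condition $\rho(\mu)^2 = \pm\I$ is automatic there and the first set equals $R_0(E_K)$; this yields the first equality. For the second, since $\pi_1(S^3)$ is trivial the identification $\{\rho \mid \rho(\mu) = \I\} = R(S^3)$ coming from $(\ref{eqn:isom_SigmaK_Sigma})$ forces $R(S^3)$ to be the single trivial representation, and $\iota(R(S^3))$ is then its image $\gamma \mapsto \alpha_{-1}(\gamma)\,\I$ under the involution $\iota$, a single point which the statement abbreviates $\{\iota\}$.

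For the final, image-level claim the key observation is that both extra pieces consist of \emph{abelian} representations: the trivial representation is abelian, and $\gamma \mapsto \alpha_{-1}(\gamma)\,\I$ has image inside $\{\pm\I\} \subset \SL$, which is abelian. By Lemma~\ref{lemma:image_abelian}, each of them is sent by $\Phi_2$ to the trivial representation of $\pi_1(\Sigma_2)$. It remains to see that this trivial representation is already attained from within $R_0(E_K)$: the abelian representation $\varphi_{\sqrt{-1}}$ satisfies $\tr \varphi_{\sqrt{-1}}(\mu) = \sqrt{-1} - \sqrt{-1} = 0$, hence lies in $R_0(E_K)$, and again by Lemma~\ref{lemma:image_abelian} it maps to the trivial representation of $\pi_1(\Sigma_2)$. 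Consequently adjoining $R(S^3) \cup \iota(R(S^3))$ to $R_0(E_K)$ does not enlarge $\im \Phi_2$, and passing to characters gives that $\im \hPhi_2$ coincides with the image of the restriction to $S_0(E_K)$. The only point requiring any care --- and the one I would flag as the (admittedly minor) main obstacle --- is precisely this passage from the representation-level redundancy to the character-level statement, namely verifying that the trivial character of $\pi_1(\Sigma_2)$ is realized by a character lying in $\hPhi_2(S_0(E_K))$; the representation $\varphi_{\sqrt{-1}}$ supplies exactly this.
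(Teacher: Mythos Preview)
Your proof is correct and follows essentially the same approach as the paper: Cayley--Hamilton for the first assertion, Remark~\ref{remark:domain} for the domain description, and Lemma~\ref{lemma:image_abelian} applied both to the two extra abelian representations and to an abelian representation inside $R_0(E_K)$ (your explicit choice of $\varphi_{\sqrt{-1}}$ just makes concrete what the paper phrases as ``all abelian representations in $R_0(E_K)$''). Your write-up is in fact more detailed than the paper's own proof.
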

\begin{proof}
  This follows from the Cayley--Hamilton identity and Remark
  \ref{remark:domain}.  The equality $\im \Phi = \im
  \Phi|_{R_0(\knotexterior)}$ can be deduced from the fact that
  $R(S^3)$ and $\iota(R(S^3))$ consist of only the trivial
  representation and its image by $\iota$, respectively.  It follows from
  Lemma~\ref{lemma:image_abelian} that they are sent to the trivial
  representation of $\pi_1(\branchedcoverZHS{2})$ by $\PhiII$ and all
  abelian representations in $R_0(\knotexterior)$ are also sent to the
  trivial one of $\pi_1(\branchedcoverZHS{2})$.  This shows that $\im
  \Phi = \im \Phi|_{R_0(\knotexterior)}$. 
\end{proof}
By Lemma~\ref{lemma:red_R0}, in the case of knots in $S^3$, the domain
$D(\PhiII)$ is reduced to the slice $R_0(\knotexterior)$.  This makes
the properties on $\PhiII$ and $R_0(\knotexterior)$ much clearer.  For
example, we obtain the next proposition by
Propositions~\ref{prop:image_Phi}, \ref{prop:reducible_abelian}, \ref{prop:preimage_red} 
and Lemma~\ref{lemma:image_abelian}.
\begin{proposition}\label{prop:correspond_rep}
  The map $\PhiII$ from $R_0(\knotexterior)$ onto
  $R^{\tau}(\branchedcoverZHS{2})$ makes the following
  correspondence. For an element $\rho$ in $R_0(\knotexterior)$,
  \begin{enumerate}
  \item If $\rho$ is abelian, then $\PhiII(\rho)$ is the trivial
    representation of $\pi_1(\branchedcoverZHS{2})$;
  \item If $\rho$ is irreducible and metabelian, then $\PhiII(\rho)$
    is a non-trivial abelian representation in
    $R^{\tau}(\branchedcoverZHS{2})$; and
  \item If $\rho$ is non-metabelian, in particular irreducible, then
    $\PhiII(\rho)$ is an irreducible representation in
    $R^{\tau}(\branchedcoverZHS{2})$.
  \end{enumerate}
\end{proposition}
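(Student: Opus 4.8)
The plan is to assemble the trichotomy from the structural results already established for $\Phi_n$, specialized to $n=2$, where Lemma~\ref{lemma:red_R0} lets me treat $R_0(E_K)$ as the effective domain. First I would record that surjectivity onto $R^{\tau_2}(\Sigma_2)$, as well as the containment $\Phi_2(\rho) \in R^{\tau_2}(\Sigma_2)$ for every $\rho$, is exactly Proposition~\ref{prop:image_Phi_n}; thus only the description of $\Phi_2(\rho)$ in the three cases (1)--(3) remains to be checked.

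Case (1) is immediate from Lemma~\ref{lemma:image_abelian}: an abelian $\rho$ kills the commutator subgroup, and since every generator $\tau_2^j(\widetilde{x}_i)$ of $\pi_1(\Sigma_2)$ is sent by $\Phi_2$ to $\rho(\mu^j x_i \mu^{-j})$, which is the image of a commutator, $\Phi_2(\rho)$ is trivial. Case (3) I would settle by contraposition using Proposition~\ref{prop:preimage_red}, which identifies $\Phi_2^{-1}(R^{\mathrm{red}}(\Sigma_2))$ with $R^{\mathrm{meta}}(\Sigma_K) \cap D(\Phi_2)$. Since $\rho \in R_0(E_K) \subset D(\Phi_2)$ is non-metabelian, it does not lie in this preimage, so $\Phi_2(\rho)$ is not reducible, i.e.\ it is irreducible.

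Case (2) is the one requiring a little more work. For an irreducible metabelian $\rho$ I would first invoke Proposition~\ref{prop:preimage_red} in the forward direction (metabelian and in $D(\Phi_2)$ forces $\Phi_2(\rho)$ reducible) and then Proposition~\ref{prop:reducible_abelian} (a reducible representation in $\im \Phi_2$ is in fact abelian), concluding that $\Phi_2(\rho)$ is abelian. The remaining point is non-triviality, and this is where an explicit computation is unavoidable. By Proposition~\ref{prop:meta_conj_dihedral}, after conjugation $\rho(x_i) = \mathrm{diag}(\lambda_i, \lambda_i^{-1})$ and $\rho(\mu) = \bm{j}$, and irreducibility rules out the trivial solution $\lambda_1 = \cdots = \lambda_{2g} = 1$, so that $\lambda_i \neq 1$ for at least one $i$. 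Because each $x_i$ lies in the commutator subgroup, $\alpha_{\zeta_4}(\widetilde{x}_i) = 1$, whence $\Phi_2(\rho)(\widetilde{x}_i) = p_2^* \rho(\widetilde{x}_i) = \rho(x_i) \neq \I$. Thus $\Phi_2(\rho)$ is a non-trivial abelian representation, which finishes (2).

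I expect this non-triviality verification in (2) to be the only genuine obstacle; the rest is a direct bookkeeping of Proposition~\ref{prop:image_Phi_n}, Proposition~\ref{prop:reducible_abelian}, Proposition~\ref{prop:preimage_red} and Lemma~\ref{lemma:image_abelian}, together with the reduction of the domain supplied by Lemma~\ref{lemma:red_R0}.
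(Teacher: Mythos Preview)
Your proposal is correct and follows essentially the same route as the paper, which simply cites Proposition~\ref{prop:image_Phi_n}, Lemma~\ref{lemma:image_abelian}, Proposition~\ref{prop:reducible_abelian} and Proposition~\ref{prop:preimage_red} without spelling out the case analysis. Your explicit verification of non-triviality in case~(2) via the diagonal form from Proposition~\ref{prop:meta_conj_dihedral} is a detail the paper leaves implicit, but it is exactly the right check and uses only ingredients already in the paper.
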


Recall that the map $\PhiII$ induces the map $\hPhi:S_0(\knotexterior)
\to X(\branchedcoverZHS{2})$ defined by
$\hPhi(\chi_{\rho}):=\chi_{\PhiII(\rho)}$ (see also
Eq.~$(\ref{eqn:def_hatPhi})$).  This gives us the following theorem,
which is one of the main results in this article.
\begin{theorem}\label{thm:geom_S0}
  The image of $S_0(\knotexterior)$ by $\hPhi$ coincides with the
  subset $X(\branchedcoverZHS{2})^{\tau} = \{\chi_\rho \in
  X(\branchedcoverZHS{2})\,|\, \rho \co \tau{\text -equivariant}\}$.
  The map $\hPhi \co S_0(\knotexterior) \to
  X(\branchedcoverZHS{2})^{\tau}$ is one-to-one correspondence on the
  fixed point set $S_0(\knotexterior)^{\hiota}$ and
  two-to-one correspondence on $S_0(\knotexterior) \setminus
  S_0(\knotexterior)^{\hiota}$.  Moreover two points in the preimage
  of a point in $X(\branchedcoverZHS{2})^{\tau}$ are interchanged 
  by the involution $\hiota$ on $S_0(\knotexterior)$.
\end{theorem}
The rest of this section is devoted to proving Theorem~\ref{thm:geom_S0}.
\begin{lemma}\label{lemma:Phi_and_iota}
  If $\rho$ is an element in $D(\PhiII)$, then the map $\PhiII$ sends
  both $\rho$ and $\iota(\rho)$ to the same $\SL$-representation of
  $\pi_1(\branchedcoverZHS{2})$, i.e.,
  $\PhiII(\rho)=\PhiII(\iota(\rho))$.  Moreover, this equality gives
  $\hPhi(\chi_\rho) = \hPhi(\chi_{\iota(\rho)})$.
\end{lemma}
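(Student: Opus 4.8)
The plan is to compute $\Phi_2(\iota(\rho))$ directly from the definition of $\Phi_2$ and match it term-by-term with $\Phi_2(\rho)$. First I would record how $\iota$ acts on the domain. Since $\alpha(\mu)=1$, we have $\iota(\rho)(\mu)=\alpha_{-1}(\mu)\rho(\mu)=-\rho(\mu)$, so $\iota(\rho)(\mu)^2=\rho(\mu)^2=\pm\I$; hence $\iota$ preserves $D(\Phi_2)$, and by Remark~\ref{remark:interchange_slice} it carries $R_{\xi(k/2)}(\Sigma_K)$ to $R_{\xi((2-k)/2)}(\Sigma_K)$. Thus if $\rho\in R_{\xi(k/2)}(\Sigma_K)$ with $\rho(\mu)^2=(-\I)^k$, then $\iota(\rho)$ sits in the slice indexed by $2-k$ with $\iota(\rho)(\mu)^2=(-\I)^{2-k}$, so that by definition $\Phi_2(\iota(\rho))=\iota_2^{\,2-k}(p_2^*\iota(\rho))$ while $\Phi_2(\rho)=\iota_2^{\,k}(p_2^*\rho)$.

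The crux is the identity $p_2^*\iota(\rho)=p_2^*\rho$, that is, the defining twist of $\iota$ dies upon pulling back to the double cover. For $\gamma\in\pi_1(C_2)$ one computes $p_2^*\iota(\rho)(\gamma)=\iota(\rho)(p_2(\gamma))=\alpha_{-1}(p_2(\gamma))\cdot\rho(p_2(\gamma))$. The map $p_2$ is injective and its image is precisely the index-two subgroup $\ker\bigl(\pi_1(\Sigma_K)\xrightarrow{\alpha}\Z\to\Z/2\Z\bigr)$ defining the cover $C_2$; hence $\alpha(p_2(\gamma))$ is even and $\alpha_{-1}(p_2(\gamma))=(-1)^{\alpha(p_2(\gamma))}=1$. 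Therefore $p_2^*\iota(\rho)(\gamma)=p_2^*\rho(\gamma)$ for every $\gamma$. This is the only place where the geometry of the cover enters, and it is the main point of the argument.

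Finally I would combine the two steps. Since the sign character defining $\iota_2$ is $\pm1$-valued, $\iota_2$ is an involution on $R(C_2)$, so $\iota_2^2=\mathrm{id}$ and $\iota_2^{\,2-k}=\iota_2^{\,-k}=\iota_2^{\,k}$. Substituting the previous identity gives
\[
\Phi_2(\iota(\rho))=\iota_2^{\,2-k}(p_2^*\iota(\rho))=\iota_2^{\,k}(p_2^*\rho)=\Phi_2(\rho),
\]
an equality of $\SL$-representations of $\pi_1(\Sigma_2)$; taking characters and using $\hPhi_2(\chi_\sigma)=\chi_{\Phi_2(\sigma)}$ then yields $\hPhi_2(\chi_\rho)=\hPhi_2(\chi_{\iota(\rho)})$. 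I expect no genuine obstacle beyond the index bookkeeping $k\mapsto 2-k$ together with the involutivity of $\iota_2$: all the content sits in the identity $p_2^*\iota=p_2^*$.
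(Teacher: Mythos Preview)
Your proof is correct and follows essentially the same approach as the paper: both arguments rest on the fact that $\alpha\circ p_2$ takes only even values, so the sign twist $\alpha_{-1}$ becomes trivial after pulling back to $\pi_1(C_2)$. The paper phrases this as the character identity $\alpha_{\sqrt{-1}}\cdot p_2^*(\alpha_{-1}\cdot\rho)=\alpha_{-\sqrt{-1}}\cdot p_2^*\rho=\alpha_{\sqrt{-1}}\cdot p_2^*\rho$ (working directly in the main case $\rho\in R_0(E_K)$), whereas you isolate the equivalent identity $p_2^*\iota(\rho)=p_2^*\rho$ and then invoke $\iota_2^{2-k}=\iota_2^{k}$; the content is the same.
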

\begin{proof}
  The map $\PhiII$ is constructed by the action $\iota$ on $R(C_2)$
  and the pull-back $p^*$.  For each element $\rho$ in
  $R_0(\knotexterior)$, the $\SL$-representation $\PhiII (\rho)$ is
  given by $\iota(p^* \rho)$ and $\PhiII (\iota(\rho))$ by $\iota(p^*
  \iota(\rho))$.  Moreover it follows from $p_* (H_1(C_2;\Z)) \simeq
  2\Z$ in $H_1(\knotexterior;\Z) \simeq \Z$ that for any $g$ in
  $\pi_1(C_2)$
  \[p^* \iota(\rho)(g) 
  =  (-1)^{p_*[g]} p^*\rho (g)
  = p^*\rho (g).
  \]
  Hence $\PhiII(\iota(\rho))$ coincides with $\PhiII(\rho)$.  This
  equality also holds for the induced map $\hPhi$. 
\end{proof}
By Proposition \ref{prop:fixed_point}, the fixed point set
$S_0(\knotexterior)^{\hiota}$ in $S_0(\knotexterior)$ consists
entirely of metabelian characters, which are in $R_0(\knotexterior)$.
\begin{proposition}\label{prop:one_to_one}
  The restriction of $\hPhi$ on $S_0(\knotexterior)^{\hiota}$ gives a
  bijection between $S_0(\knotexterior)^{\hiota}$ and all abelian
  characters of $\pi_1(\branchedcoverZHS{2})$.
\end{proposition}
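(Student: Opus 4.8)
The plan is to make the correspondence completely explicit on binary dihedral representatives and then recognize both sides as the \emph{same} set of solutions to a single integral linear system. By Proposition~\ref{prop:fixed_point_set}, the set $S_0(E_K)^{\hiota}$ consists of the characters of the irreducible binary dihedral representations together with the single character $\chi_{\varphi_{\sqrt{-1}}}$; and by Proposition~\ref{prop:correspond_rep} these are sent by $\hPhi_2$ into the abelian characters of $\pi_1(\Sigma_2)$, with $\chi_{\varphi_{\sqrt{-1}}}$ going to the trivial character and each irreducible metabelian character going to a nontrivial abelian one. So the content of the statement is that this assignment is a bijection, and I would prove this by exhibiting an explicit inverse on eigenvalue data.

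First I would compute $\Phi_2(\rho)$ for a representative in the normal form of Proposition~\ref{prop:meta_conj_dihedral}, namely $\rho(x_i)=\mathrm{diag}(\lambda_i,\lambda_i^{-1})$ and $\rho(\mu)=\bm j$. Using the presentation of $\pi_1(\Sigma_2)$ from Lemma~\ref{lemma:presentation_for_Sigma_n} and the definition $\Phi_2(\rho)(\gamma)=\alpha_{\zeta_4}(\gamma)\,p_2^*\rho(\gamma)$ (here $k=1$ since $R_0(E_K)=R_{\xi(1/2)}(E_K)$), the key point is that the lifts $\wtilde{x}_i$ and $\tau_2(\wtilde{x}_i)$ project to $x_i$ and $\mu x_i\mu^{-1}$, both in the commutator subgroup of $\pi_1(\Sigma_K)$, so that $\alpha_{\zeta_4}$ evaluates to $1$ on them. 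A short matrix computation then gives $\Phi_2(\rho)(\wtilde{x}_i)=\mathrm{diag}(\lambda_i,\lambda_i^{-1})$ and $\Phi_2(\rho)(\tau_2(\wtilde{x}_i))=\rho(\mu)\rho(x_i)\rho(\mu)^{-1}=\mathrm{diag}(\lambda_i^{-1},\lambda_i)$; in particular $\Phi_2(\rho)$ is the diagonal abelian representation determined by the tuple $(\lambda_1,\dots,\lambda_{2g})$.

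Next I would identify the two parameter spaces. On the knot side, the proof of Proposition~\ref{prop:meta_conj_dihedral} shows that the dihedral normal forms are exactly the unit-modulus tuples $(\lambda_1,\dots,\lambda_{2g})$ satisfying $\prod_{j}\lambda_j^{(V+U)_{ij}}=1$ for all $i$, taken up to the simultaneous inversion $\lambda_i\mapsto\lambda_i^{-1}$, the trivial tuple corresponding to $\varphi_{\sqrt{-1}}$. On the cover side, the presentation of $H_1(\Sigma_2;\Z)$ in Lemma~\ref{lemma:presentation_for_Sigma_n}, together with the relation $\bm{x}_0+\bm{x}_1=\bm 0$, forces $[\tau_2(\wtilde x_i)]=-[\wtilde x_i]$ and collapses the two block-rows of $A$ to the single system $\prod_j\mu_j^{(V+U)_{ij}}=1$ for the values $\mu_i=\phi([\wtilde x_i])$ of a homomorphism $\phi:H_1(\Sigma_2;\Z)\to\C^{*}$; an abelian character is one such $\phi$ up to $\phi\sim\phi^{-1}$. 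By the previous paragraph the map $\rho\mapsto\Phi_2(\rho)$ is literally $\mu_i=\lambda_i$, so it matches these two solution sets and is equivariant under inversion. Hence $\hPhi_2$ restricts to a bijection from $S_0(E_K)^{\hiota}$ onto the abelian characters of $\pi_1(\Sigma_2)$. As a cross-check the cardinalities agree: Proposition~\ref{prop:meta_conj_dihedral} gives $|S_0(E_K)^{\hiota}|=\tfrac{|\Delta_K(-1)|-1}{2}+1$, while $|H_1(\Sigma_2;\Z)|=|\Delta_K(-1)|$ is odd, so the $|\Delta_K(-1)|$ homomorphisms to $\C^{*}$ pair off under inversion with only the trivial one fixed, yielding exactly $\tfrac{|\Delta_K(-1)|+1}{2}$ abelian characters.

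The main obstacle is the bookkeeping in the third step. One must verify carefully that $\bm x_0+\bm x_1=\bm 0$ really descends to $H_1(\Sigma_2;\Z)$ and reduces the relation matrix $A$ to the single $(V+U)$-system, and that a diagonal abelian representation of $\pi_1(\Sigma_2)$ is determined up to conjugacy (equivalently, its character is determined) by the tuple $(\lambda_i)$ up to global inversion, which is precisely what makes the correspondence injective. The twisting factor $\alpha_{\zeta_4}$ must also be shown harmless on \emph{all} generators, not merely on the $\wtilde x_i$; this again rests on every generator projecting into the commutator subgroup of $\pi_1(\Sigma_K)$.
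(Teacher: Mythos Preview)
Your proof is correct and takes a more constructive route than the paper's. The paper argues by counting: it cites Propositions~\ref{prop:reducible_abelian} and~\ref{prop:preimage_red} to see that $\hPhi_2$ carries $S_0(E_K)^{\hiota}$ into (and, implicitly via Lemma~\ref{lemma:tau_equiv_contains_abelian}, onto) $X^{\mathrm{ab}}(\Sigma_2)$, then shows both sides have exactly $(|\Delta_K(-1)|-1)/2+1$ elements---the codomain count coming from $|H_1(\Sigma_2;\Z)|=|\Delta_K(-1)|$ being odd---and concludes that a surjection between equal-sized finite sets is a bijection. You instead compute $\Phi_2$ explicitly on the binary dihedral normal form, then identify \emph{both} sides with the same parameter space: solutions of $(V+U)\bm\theta\equiv\bm 0$ in the torus modulo simultaneous inversion, the knot side via the proof of Proposition~\ref{prop:meta_conj_dihedral} and the cover side via collapsing the presentation of $H_1(\Sigma_2;\Z)$ in Lemma~\ref{lemma:presentation_for_Sigma_n} using $\bm x_0+\bm x_1=\bm 0$. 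Under these identifications the map is the identity, so bijectivity is immediate. Your approach yields an explicit inverse and establishes surjectivity onto $X^{\mathrm{ab}}(\Sigma_2)$ directly, without the forward reference to Lemma~\ref{lemma:tau_equiv_contains_abelian}; the paper's approach is shorter and foregrounds the structural results on $\Phi_n^{-1}(R^{\mathrm{red}})$. Your closing cardinality check recovers the paper's count exactly.
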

\begin{proof}
  By Proposition~\ref{prop:fixed_point} and
  Lemma~\ref{lemma:fixed_pt_in_S0}, we focus on metabelian characters.
  By Propositions~\ref{prop:reducible_abelian} and
  \ref{prop:preimage_red}, the map $\hPhi$ sends the metabelian
  characters of $\pi_1(\knotexterior)$ onto the abelian characters of
  $\pi_1(\branchedcoverZHS{2})$.  By
  Propositions~\ref{prop:meta_conj_dihedral} and
  \ref{prop:fixed_point_set}, 
  the number of metabelian characters
  is equal to $(|\Delta_K(-1)|-1)/2 + 1$ 
  where
  $\Delta_K(t)$ is the Alexander polynomial of $K$.  On the other
  hand, we see that 
  the number of abelian characters of
  $\pi_1(\branchedcoverZHS{2})$ is also equal to 
  $(|\Delta_K(-1)|-1)/2 + 1$ 
  as follows.

  Every abelian representation factors through the abelianization
  $H_1(\branchedcoverZHS{2};\Z)$.  Since the order of
  $H_1(\branchedcoverZHS{2};\Z)$ is finite (an odd integer
  $|\Delta_K(-1)|$), $H_1(\branchedcoverZHS{2};\Z)$ is decomposed into
  the direct sum of some finite cyclic groups.  The character of an
  abelian representation is determined by the traces for generators of
  these cyclic groups.  By conjugation, the images of generators of
  cyclic groups are given by the diagonal matrices whose diagonal
  components are roots of unity (because the order of each cyclic
  group is finite).  Combining these facts, we can show that the
  number of abelian characters is given by
  $(|\Delta_K(-1)|-1)/2 + 1$.  Therefore the restriction of $\hPhi$
  on $S_0(\knotexterior)^{\hiota}$ gives a one--to--one
  correspondence. 
\end{proof}

\begin{proposition}\label{prop:two_to_one}
  The restriction of $\hPhi$ to $S_0(\knotexterior) \setminus
  S_0(\knotexterior)^{\hiota}$ gives a two-to-one correspondence.
\end{proposition}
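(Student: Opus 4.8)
The plan is to exploit the fact that $\hiota$ restricts to a \emph{free} involution on $S_0(E_K)\setminus S_0(E_K)^{\hiota}$ together with the fact that $\hPhi_2$ is constant on $\hiota$-orbits. Indeed, Lemma~\ref{lemma:Phi_and_iota} gives $\hPhi_2(\chi_\rho)=\hPhi_2(\chi_{\iota(\rho)})$, so every fibre of $\hPhi_2$ is a union of $\hiota$-orbits. Since a point of the complement is by definition not fixed by $\hiota$, each orbit $\{\chi_\rho,\chi_{\iota(\rho)}\}$ there consists of two \emph{distinct} characters. Thus it suffices to show that over the image of the complement each fibre is exactly one such orbit; equivalently, I would prove the injectivity-modulo-$\hiota$ statement: if $\rho,\rho'\in R_0(E_K)$ are non-metabelian and $\hPhi_2(\chi_\rho)=\hPhi_2(\chi_{\rho'})$, then $\chi_{\rho'}\in\{\chi_\rho,\chi_{\iota(\rho)}\}$.

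For this injectivity step, I first recall that on $S_0(E_K)\setminus S_0(E_K)^{\hiota}$ all characters are non-metabelian (Proposition~\ref{prop:fixed_point} and Proposition~\ref{prop:fixed_point_set}), hence irreducible, so by Proposition~\ref{prop:correspond_rep} both $\Phi_2(\rho)$ and $\Phi_2(\rho')$ are irreducible representations of $\pi_1(\Sigma_2)$. Having equal characters, they are conjugate by \cite[Proposition 1.5.2]{CS:1983}; and since $\Phi_2$ commutes with conjugation, I may replace $\rho'$ by a conjugate (which leaves $\chi_{\rho'}$ unchanged) so that $\Phi_2(\rho')=\Phi_2(\rho)=:\sigma$. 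Reading off this equality on the generators of the presentation of $\pi_1(\Sigma_2)$ in Lemma~\ref{lemma:presentation_for_Sigma_n}, the character factor $\alpha_{\zeta_4}$ is trivial on each $\wtilde{x}_i$ (because $x_i$ lies in the commutator subgroup), whence $\rho'(x_i)=\sigma(\wtilde{x}_i)=\rho(x_i)$ for all $i$.

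It then remains to compare $\rho(\mu)$ with $\rho'(\mu)$. By the computation in the proof of Lemma~\ref{lemma:ImPhi_n_subset_R^tau}, the representation $\sigma=\Phi_2(\rho)$ is $\tau_2$-equivariant with equivariance matrix $\rho(\mu)$, and likewise it is $\tau_2$-equivariant with matrix $\rho'(\mu)$. As $\sigma$ is irreducible, Lemma~\ref{lemma:uniqueness_C} forces $\rho'(\mu)=\pm\rho(\mu)$. Since every $x_i$ lies in the commutator subgroup, the involution $\iota$ fixes the $\rho$-images of the $x_i$ and negates $\rho(\mu)$; hence $\rho'=\rho$ in the $+$ case and $\rho'=\iota(\rho)$ in the $-$ case, giving $\chi_{\rho'}\in\{\chi_\rho,\chi_{\iota(\rho)}\}$. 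Combined with the first paragraph, every fibre over the image of the complement is the two-point set $\{\chi_\rho,\chi_{\iota(\rho)}\}$, which is precisely the asserted $2:1$-correspondence. (No extra preimages from $S_0(E_K)^{\hiota}$ can occur, since the metabelian characters map to \emph{abelian} characters of $\pi_1(\Sigma_2)$, while complement characters map to irreducible ones.)

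I expect the delicate point to be the identification of $\rho(\mu)$ as the (sign-ambiguous) equivariance matrix of $\Phi_2(\rho)$ and the clean reading-off of the generators $\wtilde{x}_i$; once these are in place, the uniqueness statement of Lemma~\ref{lemma:uniqueness_C} carries the essential weight, and the remainder is bookkeeping with $\iota$ and the observation that $\chi_\rho\neq\chi_{\iota(\rho)}$ away from the fixed-point set.
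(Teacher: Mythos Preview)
Your argument is correct and follows essentially the same route as the paper's own proof: reduce to irreducible $\Phi_2(\rho)$ via Proposition~\ref{prop:correspond_rep}, use that irreducible representations with equal characters are conjugate, read off $\rho'(x_i)$ from $\Phi_2(\rho')(\wtilde{x}_i)$, and then invoke Lemma~\ref{lemma:uniqueness_C} to pin down $\rho'(\mu)$ up to sign. The only cosmetic difference is that the paper carries the conjugating matrix $P$ through the computation (obtaining $\rho'(x_i)=P\rho(x_i)P^{-1}$ and $\rho'(\mu)=\pm P\rho(\mu)P^{-1}$), whereas you absorb $P$ into $\rho'$ at the outset using that $\Phi_2$ intertwines conjugation; this is harmless and slightly streamlines the bookkeeping.
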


\begin{proof}
  It is sufficient to show that if $\rho$ and $\rho'$ are two
  non-metabelian representations of $\pi_1(\knotexterior)$ satisfying
  $\hPhi(\chi_\rho) = \hPhi(\chi_{\rho'})$, then $\chi_{\rho'}$
  coincides with either $\chi_\rho$ or $\hiota(\chi_{\rho})$.  By
  Proposition~\ref{prop:correspond_rep}, the representations
  $\PhiII(\rho)$ and $\PhiII(\rho')$ are irreducible.  Since
  $\PhiII(\rho)$ and $\PhiII(\rho')$ have the same characters, by
  \cite[Proposition 1.5.2]{CS:1983}, these two representations are
  conjugate, i.e., there exists an $\SL$-element $P$ such that
  \begin{equation}\label{eqn:conjugate_Phi}
    \PhiII(\rho') = P \PhiII(\rho) P^{-1}.
  \end{equation}
  We use the presentations of $\pi_1(\knotexterior)$ and
  $\pi_1(\branchedcoverZHS{2})$ as in
  Lemma~\ref{lemma:presentation_for_Sigma_n}.  By the construction of
  $\PhiII$, the matrix $\PhiII(\rho)(\wtilde{x}_i)$ is expressed as
  \begin{equation}\label{eqn:Phi_x_i}
    \PhiII(\rho)(\wtilde{x}_i) = \rho(x_i).
  \end{equation}
  Combining Eq.~$(\ref{eqn:Phi_x_i})$ with
  Eq.~$(\ref{eqn:conjugate_Phi})$, we have the equality
  \begin{equation}\label{eqn:rho_for_commutator}
    \rho'(x_i) = P \rho(x_i) P^{-1}.
  \end{equation}
  Similarly, the matrix $\PhiII(\rho)(\tau(\wtilde{x}_i))$ is
  expressed as
  \begin{equation}\label{eqn:Phi_tau_x_i}
    \PhiII(\rho)(\tau(\wtilde{x}_i)) = \rho(\mu)\rho(x_i)\rho(\mu)^{-1} 
    = Ad_{\rho(\mu)}(\PhiII(\rho)(\wtilde{x}_i)).
  \end{equation}
  Combining Eq.~$(\ref{eqn:Phi_tau_x_i})$ with
  Eq.~$(\ref{eqn:conjugate_Phi})$, we have the equality
  \[
  \PhiII(\rho')(\tau(\wtilde{x}_i)) = Ad_{P \rho(\mu)
    P^{-1}}(\PhiII(\rho')(\wtilde{x}_i)).
  \]
  The matrix $\PhiII(\rho)(\tau(\wtilde{x}_i))$ is also expressed as
  $Ad_{\rho'(\mu)}(\PhiII(\rho')(\wtilde{x}_i)).$ By
  Lemma~\ref{lemma:uniqueness_C}, we obtain the equality
  \begin{equation}\label{eqn:rho_for_meridian}
    \rho'(\mu) = \pm P\rho(\mu)P^{-1}.
  \end{equation}
  From the relations that $\iota(\rho)(x_i) = \rho(x_i)$ and
  $\iota(\rho)(\mu) = - \rho(\mu)$ and
  Eqs.~$(\ref{eqn:rho_for_commutator})$ and
  $(\ref{eqn:rho_for_meridian})$, we can conclude that $\rho'$ is
  conjugate to either $\rho$ or $\iota(\rho)$ by $P$. 
\end{proof}

\begin{proof}[Proof of Theorem \ref{thm:geom_S0}]
  Combining Lemma~\ref{lemma:Phi_and_iota},
  Propositions~\ref{prop:one_to_one} and \ref{prop:two_to_one}, we
  obtain Theorem~\ref{thm:geom_S0}. 
\end{proof}

Moreover $\tau$-equivariant representations subspace
$R^{\tau}(\branchedcoverZHS{2})$ have the following distinct property
from the cases other than $n=2$ as mentioned at the beginning of
Section~\ref{section:map_rep_spaces} 
(compare to Proposition~\ref{prop:abelian_but_not_equivalent}).

\begin{lemma}\label{lemma:tau_equiv_contains_abelian}
  The set $R^{\tau}(\branchedcoverZHS{2})$ contains
  $R^{\mathrm{ab}}(\branchedcoverZHS{2})$.
\end{lemma}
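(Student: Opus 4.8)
The plan is to verify the equivariance diagram~(\ref{diagram:equiv_diagram}) directly for an arbitrary abelian $\rho \in R^{\mathrm{ab}}(\Sigma_2)$, by exhibiting an explicit conjugating matrix $C$. The geometric heart of the argument is that the covering involution $\tau_2$ acts on $H_1(\Sigma_2;\Z)$ as multiplication by $-1$; once this is established the verification reduces to a one--line matrix identity.

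First I would pass to the homological picture. Since $\rho$ is abelian, it factors through the abelianization $H_1(\Sigma_2;\Z)$, which is finite of odd order $|\Delta_K(-1)|$. Hence $\im \rho$ is a finite abelian subgroup of $\SL$ all of whose elements have finite (odd) order; such elements are simultaneously diagonalizable, and no nontrivial element can lie in ${\rm Para}$ (whose only finite--order elements are $\pm\I$). After conjugation I may therefore assume $\im \rho \subset {\rm Hyp}$, so that every $\rho(\gamma)$ is a diagonal matrix $\mathrm{diag}(\lambda,\lambda^{-1})$.

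Next I would compute the action of $\tau_2$ on homology. Applying Lemma~\ref{lemma:presentation_for_Sigma_n} with $n=2$, the generators $[\wtilde{x}_i]$ and $[\tau_2(\wtilde{x}_i)]$ of $H_1(\Sigma_2;\Z)$ satisfy $\bm{x}_0 + \bm{x}_1 = \bm{0}$, i.e.\ $[\tau_2(\wtilde{x}_i)] = -[\wtilde{x}_i]$ for every $i$. Thus $H_1(\Sigma_2;\Z)$ is already generated by $[\wtilde{x}_1],\ldots,[\wtilde{x}_{2g}]$, and the induced involution $\tau_2$ sends each generator to its negative; consequently $\tau_2 = -\mathrm{id}$ on $H_1(\Sigma_2;\Z)$. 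Since $\rho$ factors through this abelian group, it follows that $\tau_2^*\rho(\gamma) = \rho(\tau_2\gamma) = \rho(\gamma)^{-1}$ for all $\gamma \in \pi_1(\Sigma_2)$.

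Finally I would produce the conjugating element. For a diagonal $D = \mathrm{diag}(\lambda,\lambda^{-1})$, conjugation by $C = \bm{j} = \left(\begin{array}{cc} 0 & 1 \\ -1 & 0 \end{array}\right) \in \SL$ gives $C D C^{-1} = \mathrm{diag}(\lambda^{-1},\lambda) = D^{-1}$. Hence $Ad_C(\rho(\gamma)) = \rho(\gamma)^{-1} = \tau_2^*\rho(\gamma)$ for every $\gamma$, so diagram~(\ref{diagram:equiv_diagram}) commutes with this $C$, and $\rho \in R^{\tau_2}(\Sigma_2)$. The only step demanding any care is the reduction to a diagonal image, namely ruling out a parabolic image; this is immediate from the finiteness of $H_1(\Sigma_2;\Z)$, so I do not expect a genuine obstacle here.
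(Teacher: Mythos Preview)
Your proof is correct and follows essentially the same route as the paper's: reduce to $\im\rho\subset{\rm Hyp}$ using that $H_1(\Sigma_2;\Z)$ is finite, invoke the relation $\bm{x}_0+\bm{x}_1=\bm{0}$ from Lemma~\ref{lemma:presentation_for_Sigma_n} to get $\rho(\tau_2(\wtilde{x}_i))=\rho(\wtilde{x}_i)^{-1}$, and conjugate by $C=\bm{j}$. Your extra remark that $\tau_2=-\mathrm{id}$ on all of $H_1(\Sigma_2;\Z)$ and your explicit justification for excluding ${\rm Para}$ are small expository improvements, but the argument is the same; note that passing to a conjugate of $\rho$ is harmless by Lemma~\ref{lemma:Ad_equiv}.
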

\begin{proof}
  Let $\rho$ be an abelian $\SL$-representation of
  $\pi_1(\branchedcoverZHS{2})$.  Since this representation factors
  through $H_1(\branchedcoverZHS{2};\Z)$ which is a finite abelian
  group with the order $|\Delta_K(-1)|$.  By taking conjugation, the
  image $\im \rho$ is contained in the maximal abelian subgroup ${\rm
    Hyp}$.  So we can set $\rho(\widetilde{x}_i)$ as
  \[
  \rho(\wtilde{x}_i) = \left(
    \begin{array}{cc}
      \alpha_i & 0 \\
      0 & \alpha_i^{-1}
    \end{array}
  \right).
  \]
  By Lemma~\ref{lemma:presentation_for_Sigma_n}, we have
  $[\wtilde{x}_i] + [\tau(\wtilde{x}_i)] = \bm{0}$ in
  $H_1(\branchedcoverZHS{2};\Z)$ for all $i$.  Hence
  $\rho(\tau(\wtilde{x}_i))$ is given by $\rho(\wtilde{x}_i)^{-1}$.

  If we set 
  $C = \left(
  \begin{array}{cc}
    0 & 1 \\
    -1 & 0
  \end{array}
  \right)$, then the diagram~$(\ref{diagram:equiv_diagram})$ becomes
  commutative.  Therefore $\rho$ is $\tau$-equivariant. 
\end{proof}

\begin{remark}
  Lemma~\ref{lemma:tau_equiv_contains_abelian} does not hold for
  $3$--fold branched covers of $S^3$.  As shown below, for the trefoil
  knot in $S^3$ there exists an abelian representation of
  $\pi_1(\Sigma_3)$ which is not $\tau$-equivariant.
\end{remark}

\begin{proposition}\label{prop:abelian_but_not_equivalent}
  Let $K$ be the left hand trefoil knot in $S^3$.  The set
  $R^{\mathrm{ab}}(\Sigma_3)$ is not contained in
  $R^{\tau}(\Sigma_3)$.
\end{proposition}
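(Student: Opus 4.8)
The plan is to exhibit an explicit abelian representation of $\pi_1(\Sigma_3)$ admitting no conjugating matrix $C$ that realizes the covering action $\tau_3$, and the whole argument hinges on two features special to the $3$--fold branched cover of the trefoil: the group $H_1(\Sigma_3;\Z)$ is $2$--torsion, and $\tau_3$ acts on it with order three. The reason these suffice is an elementary fact about $\SL$: if $g\in\SL$ satisfies $g^2=\I$, its eigenvalues lie in $\{\pm1\}$ and multiply to $1$, forcing $g=\pm\I$. Consequently every $\SL$--representation of a $2$--torsion group is scalar--valued, and the adjoint action $Ad_C$ is trivial on scalar matrices for \emph{every} $C$. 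Thus a scalar representation $\rho$ is $\tau_3$--equivariant if and only if $\rho\circ\tau_3=\rho$ on the nose, leaving no room to absorb a nontrivial homological symmetry into the matrix $C$ of diagram~$(\ref{diagram:equiv_diagram})$.

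First I would compute $H_1(\Sigma_3;\Z)$ together with the induced $\tau_3$--action. Using Lemma~\ref{lemma:presentation_for_Sigma_n} with $g=1$, the Seifert matrix of the trefoil (so $\Delta_K(t)=t^2-t+1$; the handedness only flips the sign of the Seifert form and is immaterial here) and $T=\I$ for the regular genus--one surface in $S^3$, the presentation matrix $A$ is the $6\times 6$ block matrix $\mathrm{Id}_3\otimes V-P\otimes U$, with $P$ the cyclic shift. A short Smith--normal--form reduction yields $H_1(\Sigma_3;\Z)\cong \Z/2\Z\oplus\Z/2\Z$, consistently with $\det A=\Delta_K(1)\,\Delta_K(\zeta_3)\,\Delta_K(\zeta_3^2)=4$. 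Tracking the classes $\bm{x}_0,\bm{x}_1,\bm{x}_2$ under the shift $\tau_3(\bm{x}_j)=\bm{x}_{j+1}$ together with the relation $\bm{x}_0+\bm{x}_1+\bm{x}_2=\bm{0}$ from Lemma~\ref{lemma:presentation_for_Sigma_n}, I would check that $\tau_3$ fixes $0$ and cyclically permutes the three nonzero elements of $\Z/2\Z\oplus\Z/2\Z$; in particular $\tau_3$ has order three on homology.

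With this in hand the conclusion is immediate. Every abelian $\SL$--representation of $\pi_1(\Sigma_3)$ factors through $H_1(\Sigma_3;\Z)$, hence by the $2$--torsion observation has the form $\gamma\mapsto\chi(\gamma)\,\I$ for a character $\chi\colon H_1(\Sigma_3;\Z)\to\{\pm1\}$. Choosing $\chi$ to be any of the three nontrivial characters and assuming the resulting $\rho$ were $\tau_3$--equivariant via some $C$, we would get $\chi(\tau_3\gamma)\,\I=C\bigl(\chi(\gamma)\,\I\bigr)C^{-1}=\chi(\gamma)\,\I$ for all $\gamma$, i.e.\ $\chi\circ\tau_3=\chi$. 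But a character invariant under the $3$--cycle on the nonzero classes $a,b,a+b$ must take a common value there, which is incompatible with $\chi(a+b)=\chi(a)\chi(b)$ unless $\chi$ is trivial. This contradiction gives $\rho\notin R^{\tau_3}(\Sigma_3)$, whence $R^{\mathrm{ab}}(\Sigma_3)\not\subset R^{\tau_3}(\Sigma_3)$.

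The main obstacle is the homological computation in the second step, because the statement is genuinely sensitive to the isomorphism type of $H_1(\Sigma_3;\Z)$: an order--three automorphism exists only since the group is $(\Z/2\Z)^2$ with automorphism group $\GL_2(\F_2)\cong S_3$, whereas had the reduction produced $\Z/4\Z$ (with automorphism group $\Z/2\Z$) the covering action would be trivial on homology and the proposition would be false. So the delicate point is to carry the Smith--normal--form reduction through correctly and to confirm that the shift $\tau_3$ really induces a $3$--cycle on the nonzero classes rather than the identity; everything else is formal.
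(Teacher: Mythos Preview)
Your argument is correct and is essentially the same as the paper's: both exploit that $H_1(\Sigma_3;\Z)\cong(\Z/2\Z)^2$ forces any abelian $\SL$--representation to be scalar--valued, so that $\tau_3$--equivariance reduces to literal $\tau_3$--invariance, which fails because $\tau_3$ moves the homology classes nontrivially. The only difference is presentational: the paper writes down one explicit $\{\pm\I\}$--valued representation and observes a trace mismatch between $\rho(\wtilde{x}_1)$ and $\rho(\tau_3(\wtilde{x}_1))$, whereas you phrase the same obstruction as the nonexistence of a $\tau_3$--invariant nontrivial character on $(\Z/2\Z)^2$.
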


To show Proposition~\ref{prop:abelian_but_not_equivalent}, 
we consider regular Seifert surface $S$ of the left hand trefoil 
depicted in Figure~\ref{fig:Seifert_trefoil}. 
We fix a spine of $S$ as in Figure~\ref{fig:Seifert_trefoil}. 
Then the Seifert matrix is given by the following matrix 
\[
Q=
\left(
  \begin{array}{cc}
    1 & 0 \\
    -1 & 1
  \end{array}
\right).
\]
Now we consider the Heegaard splitting 
$\overline{S^3 \setminus N(S)} \cup N(S)$ associated to $S$
as in Figure~\ref{fig:Heegaard_trefoil}. Each attaching circle of
$2$-handles % two--handles
intersects with the boundaries of meridian disks of
$1$-handles % one--handles
at only one point in this Heegaard splitting. 
In this setting, the boundary operator from $C^{\hbox{\scriptsize Morse}}_2(S^3;\Z)$ 
to $C^{\hbox{\scriptsize Morse}}_1(S^3;\Z)$ is expressed as the identity matrix 
(see Remark~\ref{remark:matrix_T_identity}). 
The relations in the Lin presentation for $S$ are given by
\[
\alpha_1 = x_1x_2^{-1}, \quad \alpha_2 = x_2, \quad \beta_1 = x_1,
\quad \beta_2 = x_1^{-1}x_2.
\]
Hence, by Lemma~\ref{lemma:presentation_for_Sigma_n}, a presentation
of $\pi_1(\Sigma_3)$ is given by
\[
\langle \wtilde{x}_1, \wtilde{x}_2, \tau(\wtilde{x}_1),
\tau(\wtilde{x}_2), \tau^2(\wtilde{x}_1), \tau^2(\wtilde{x}_2),
\,| \, \wtilde{\alpha}_1^{(k)} = \wtilde{\beta}_1^{(k-1)},
\wtilde{\alpha}_2^{(k)} = \wtilde{\beta}_2^{(k-1)}\, (k\,{\rm mod}\,
3) \rangle.
\]
This presentation is reduced as 
\[
\langle \wtilde{x}_1, \wtilde{x}_2 \,|\, \wtilde{x}_1
\wtilde{x}_2^{-1} = \wtilde{x}_2 \wtilde{x}_1, \wtilde{x}_2
\wtilde{x}_1 = \wtilde{x}_1^{-1} \wtilde{x}_2 \rangle.
\]
About another method to obtain this presentation, 
for example see~\cite[Chapter 10]{Rolfsen}.

\begin{figure}[!ht]
  \begin{center}
    \includegraphics[scale=.5]{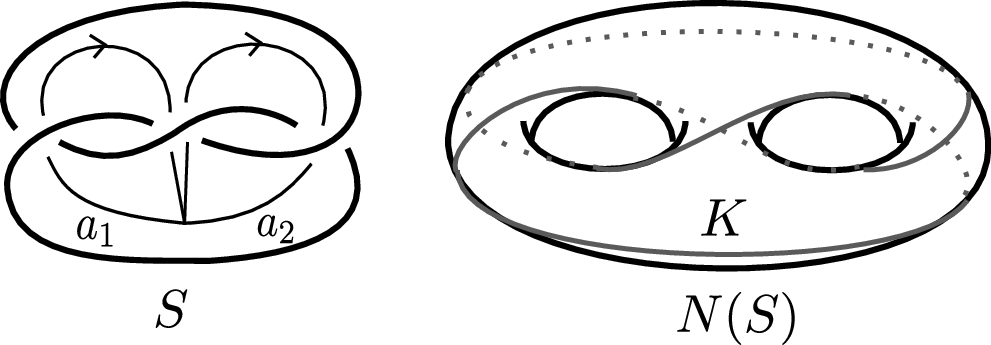}
  \end{center}
  \caption{The regular Seifert surface of the trefoil knot}\label{fig:Seifert_trefoil}
\end{figure}

\begin{figure}[!ht]
  \begin{center}
    \includegraphics[scale=.4]{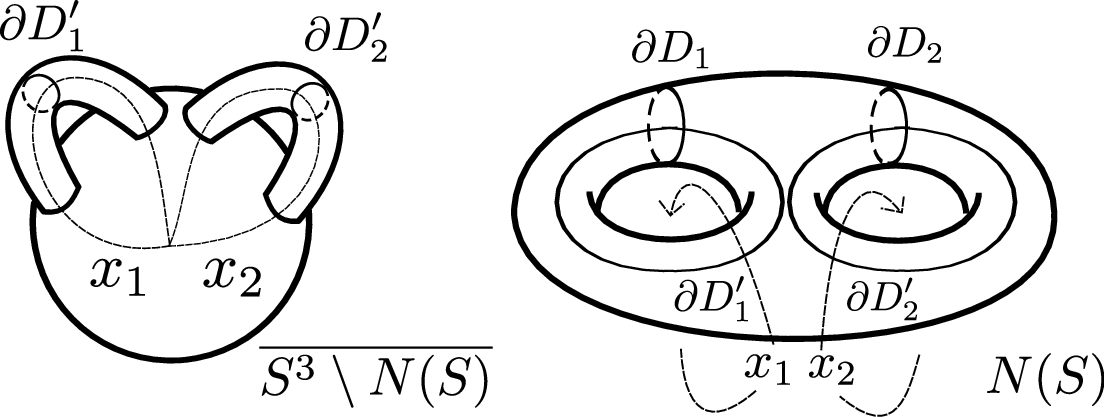}
  \end{center}
  \caption{Heegaard splitting of $S^3$ associated to $S$}
  \label{fig:Heegaard_trefoil}
\end{figure}

\begin{proof}[Proof of Proposition~\ref{prop:abelian_but_not_equivalent}]
  By Lemma~\ref{lemma:presentation_for_Sigma_n}, the homology group
  $H_1(\Sigma_3;\Z)$ is presented as
  \[
  \Z^6 \xrightarrow{A} \Z^6 \to H_1(\Sigma_3;\Z),
  \]
  where
  \[
  A = \left(
    \begin{array}{ccc}
      \trans{Q} & -Q & 0 \\
      0 & \trans{Q} & -Q \\
      -Q & 0 & \trans{Q} \\
    \end{array}
  \right).
  \]
  If we set
  \[
  \left\{
    \begin{array}{lll}
      \rho(\wtilde{x}_1) = \I, & \rho(\tau(\wtilde{x}_1)) 
      = -\I, & \rho(\tau^2(\wtilde{x}_1)) = -\I, \\
      \rho(\wtilde{x}_2) = -\I, & \rho(\tau(\wtilde{x}_2)) 
      = \I, & \rho(\tau^2(\wtilde{x}_2)) = -\I, \\
    \end{array}
  \right.
  \]
  then this correspondence defines an abelian representation of
  $\pi_1(\Sigma_3)$.  Since the trace of $\rho(\wtilde{x}_1)$ is not
  equal to that of $\rho(\tau(\wtilde{x}_1))$, there does not exist
  any $\SL$-element such that the
  diagram~$(\ref{diagram:equiv_diagram})$ commutes. 
\end{proof}

\section{Applications}
\label{section:application}
In the previous section, we have investigated the correspondence
$\hPhi$ between $S_0(\knotexterior)$ and
$X(\branchedcoverZHS{2})^{\tau}$.  In particular,
Lemma~\ref{lemma:tau_equiv_contains_abelian} shows that
$R^{\mathrm{ab}}(\branchedcoverZHS{2}) \subset
R^{\tau}(\branchedcoverZHS{2})$.  In this section, we look into
surjectivity of $\hPhi \co S_0(\knotexterior) \to
X(\branchedcoverZHS{2})$ for two--bridge knots and pretzel knots of
type $(p,q,r)$.  Namely, we show that
$R^{\tau}(\branchedcoverZHS{2})$ contains all $\SL$-representations
$R(\branchedcoverZHS{2})$.

As regards two--bridge knots, it is well-known that the two--fold
branched cover $\branchedcoverZHS{2}$ along a two--bridge knot is a
$3$-dimensional lens space.  Since the fundamental group of a lens
space is cyclic, the $\SL$-representation space for a lens space
consists entirely of abelian representations.  By
Theorem~\ref{thm:geom_S0} and Proposition~\ref{prop:correspond_rep},
the slice $S_0(\knotexterior)$ consists of metabelian characters of
$\pi_1(\knotexterior)$. It is known that the order of
$H_1(\branchedcoverZHS{2};\Z)$ is given by $|\Delta_K(-1)|$.  Thus we
have proved the following Lemma, which follows originally from the
proof of Theorem $1.3$ in \cite{nagasato07:_finit_of_section_of_sl}.
\begin{lemma}\label{lemma:two_brige_S0}
  For a two--bridge knot $K$, the slice $S_0(\knotexterior)$ coincides
  with the fixed point set $S_0(\knotexterior)^{\hiota}$ and it can be
  identified with the set of $((|\Delta_K(-1)|-1)/2+1)$ conjugacy
  classes of the $\SL$-metabelian representations of
  $\pi_1(\knotexterior)$.
\end{lemma}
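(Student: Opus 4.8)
The plan is to exploit the single geometric input special to two--bridge knots: the double branched cover $\Sigma_2$ is a lens space, so $\pi_1(\Sigma_2)$ is cyclic and hence $R(\Sigma_2)=R^{\mathrm{ab}}(\Sigma_2)$. In particular $\pi_1(\Sigma_2)$ admits no irreducible $\SL$-representation at all. First I would feed this observation into Proposition~\ref{prop:correspond_rep}. If some $\rho\in R_0(E_K)$ were non-metabelian, then by part~(3) of that proposition $\Phi_2(\rho)$ would be an irreducible representation of $\pi_1(\Sigma_2)$, which cannot exist. Therefore every representation in $R_0(E_K)$ is metabelian, and consequently every character in $S_0(E_K)$ is the character of a metabelian representation of $\pi_1(E_K)$.

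Next I would identify this set of metabelian characters with the involution-fixed set $S_0(E_K)^{\hiota}$. Given a metabelian $\rho\in R_0(E_K)$, there are two cases. If $\rho$ is irreducible, then Proposition~\ref{prop:fixed_point} places $\chi_\rho$ in $X^{\mathrm{irr}}(\Sigma_K)^{\hiota}$, hence in $S_0(E_K)^{\hiota}$. If $\rho$ is reducible, then since $\Delta_K(-1)\neq 0$ the remark following Lemma~\ref{lemma:fixed_pt_in_S0} (via Lemma~\ref{lemma:reducible_rep}) rules out any non-abelian reducible representation in $R_0(E_K)$, so $\rho$ is abelian and its character is forced to be $\chi_{\varphi_{\sqrt{-1}}}$, which lies in $S_0(E_K)^{\hiota}$ by Proposition~\ref{prop:fixed_point_set}. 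Thus $S_0(E_K)\subset S_0(E_K)^{\hiota}$; the reverse inclusion is automatic, so $S_0(E_K)=S_0(E_K)^{\hiota}$.

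Finally, for the enumeration I would invoke Proposition~\ref{prop:meta_conj_dihedral}, which counts $(|\Delta_K(-1)|-1)/2$ conjugacy classes of irreducible metabelian representations, and adjoin the single abelian class represented by $\varphi_{\sqrt{-1}}$, giving the asserted total of $(|\Delta_K(-1)|-1)/2+1$. As a cross-check one could instead route through Theorem~\ref{thm:geom_S0} together with Lemma~\ref{lemma:tau_equiv_contains_abelian}: for a lens space $X(\Sigma_2)^{\tau_2^*}=X(\Sigma_2)$ is already exhausted $1{:}1$ by the image of $S_0(E_K)^{\hiota}$ under $\hPhi_2$, forcing the $2{:}1$ locus $S_0(E_K)\setminus S_0(E_K)^{\hiota}$ to be empty.

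I do not anticipate a real obstacle here; once the lens-space property is in hand the argument is a bookkeeping assembly of results already established. The only step demanding care is confirming that the reducible locus of $R_0(E_K)$ collapses to the single abelian character rather than spreading into a family of reducible characters. This is precisely where the hypothesis $\Delta_K(-1)\neq 0$ (equivalently, the finiteness and odd order $|\Delta_K(-1)|$ of $H_1(\Sigma_2;\Z)$) is essential, entering through Lemma~\ref{lemma:reducible_rep}.
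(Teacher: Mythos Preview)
Your proposal is correct and follows essentially the same route as the paper: both arguments hinge on the fact that $\Sigma_2$ is a lens space, so $\pi_1(\Sigma_2)$ is cyclic and admits no irreducible $\SL$-representation, and then invoke Proposition~\ref{prop:correspond_rep} (and implicitly Theorem~\ref{thm:geom_S0}) to force every character in $S_0(E_K)$ to be metabelian. Your write-up is in fact more explicit than the paper's in separating the irreducible and reducible cases and in justifying the count via Proposition~\ref{prop:meta_conj_dihedral}, but the underlying strategy is the same.
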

\begin{remark}
  We do not have to consider reducible and non--abelian
  representations in Lemma~\ref{lemma:two_brige_S0} since there exist
  no such representations in $R_0(\knotexterior)$.  This is due to
  Remark~\ref{remark:reducible_reps_in_S0}.
\end{remark}
Another way of stating Lemma~\ref{lemma:two_brige_S0} is to say that
if $\branchedcoverZHS{2}$ is a lens space, then the slice
$S_0(\knotexterior)$ coincides with the fixed point set
$S_0(\knotexterior)^{\hiota}$.  This means that the difference
$S_0(\knotexterior) \setminus S_0(\knotexterior)^{\hiota}$ is an
obstruction for $\branchedcoverZHS{2}$ to be a lens space 
and thus for $K$ to be a two--bridge knot.
\begin{theorem}
  If $S_0(\knotexterior) \setminus S_0(\knotexterior)^{\hiota} \not =
  \emptyset$, then $\branchedcoverZHS{2}$ is not a lens space.
  In particular, $K$ is not a two--bridge knot.
\end{theorem}

\begin{remark}
  The similar statement also holds for a knot in an integral homology
  $3$--sphere.
\end{remark}

Here we give explicit representatives in conjugacy classes of
metabelian representations by using R. Riley's construction~\cite{Riley} and the
correspondence $\PhiII$.  
We choose the following presentation of a two--bridge knot group as
  \[
  \langle x, y\,|\, wx=yw \rangle,
  \]
where the meridians $x$ and $y$ are depicted in
Figure~\ref{fig:two_bridge} and $w$ is a word in $x$ and $y$. 
We will show the following theorem.

\begin{theorem}\label{thm:explicit_conj_class}
  Let $K$ be a two--bridge knot and $p$ the determinant of $K$, i.e.,
  $p=|\Delta_K(-1)|$. Then the slice
  $S_0(\knotexterior)$ is identified with the following set of
  conjugacy classes of $\SL$-representations:
  \[
  \left\{ \left.  
    \left[ \rho_k \right] 
          \,\right|\, 
    k = 1, \ldots, (p-1)/2 \right\} 
  \cup 
  \left\{ 
    \left[ \varphi_{\sqrt{-1}} \right]
  \right\},
  \]
  where the representation $\rho_k$ is given by
  \[
  x\mapsto \left(
    \begin{array}{cc}
      \sqrt{-1} & -\sqrt{-1} \\
      0  & -\sqrt{-1}
    \end{array}
  \right), \quad 
  y \mapsto \left(
    \begin{array}{cc}
      \sqrt{-1} & 0 \\
      -\sqrt{-1}(e^{k\pi\sqrt{-1}/p}-e^{-k\pi\sqrt{-1}/p})^2 & -\sqrt{-1}
    \end{array}
  \right)
  \]
  and $\varphi_{\sqrt{-1}}$ is the abelian representation given by
  \[
  \varphi_{\sqrt{-1}}(\mu)= \left(
    \begin{array}{cc}
      \sqrt{-1} & 0 \\
      0 & -\sqrt{-1}
    \end{array}
  \right).
\]
\end{theorem}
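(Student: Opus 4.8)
The plan is to \emph{not} re-prove any enumeration but to upgrade the count already in hand to explicit matrix representatives. By Lemma~\ref{lemma:two_brige_S0} the slice $S_0(E_K)$ for a two--bridge knot consists of exactly $(p-1)/2$ conjugacy classes of irreducible metabelian $\SL$-representations, and by Proposition~\ref{prop:fixed_point_set} the only reducible class in $S_0(E_K)$ is $[\varphi_{\sqrt{-1}}]$. So it suffices to show that the listed $\rho_k$ (for $k=1,\dots,(p-1)/2$) are pairwise non-conjugate irreducible metabelian representations lying in $R_0(E_K)$; since there are precisely $(p-1)/2$ such classes, the $\rho_k$ must exhaust them, and the class of $\varphi_{\sqrt{-1}}$ is then forced.

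For the shape of the representatives I would use R. Riley's normal form: every irreducible $\SL$-representation of a two--bridge group $\langle x,y \mid wx=yw\rangle$ is conjugate to one sending $x\mapsto\begin{pmatrix} s & * \\ 0 & s^{-1}\end{pmatrix}$ and $y\mapsto\begin{pmatrix} s & 0 \\ * & s^{-1}\end{pmatrix}$. Since the meridians $x,y$ must have trace $0$ on $S_0(E_K)$, one sets $s=\sqrt{-1}$, and then each irreducible metabelian class is represented by such a normalised representation, determined up to the off--diagonal parameter; the displayed $\rho_k$ is exactly the normalised form for a particular parameter value (after the harmless diagonal conjugation reconciling the two off--diagonal conventions). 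Because this parameter is nonzero for $k=1,\dots,(p-1)/2$, the matrices $\rho_k(x)$ (upper triangular, non--diagonal) and $\rho_k(y)$ (lower triangular, non--diagonal) share no common eigenvector, so each $\rho_k$ is irreducible; being in $S_0(E_K)$, it is automatically metabelian by Lemma~\ref{lemma:two_brige_S0}.

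The heart of the argument is pinning the parameter, and this is where $\Phi_2$ enters. The parameter is detected by $\tr\rho(xy^{-1})$. As $xy^{-1}$ lies in the commutator subgroup, a lift $\widetilde{xy^{-1}}$ to $\pi_1(C_2)$ is fixed by $\alpha_{\sqrt{-1}}$, so by the construction of $\Phi_2$ one has $\tr\hPhi_2(\chi_\rho)(\overline{xy^{-1}}) = \tr\rho(xy^{-1})$, where $\overline{xy^{-1}}$ is the image in $\pi_1(\Sigma_2)$. For a two--bridge knot $\Sigma_2$ is a lens space with $H_1(\Sigma_2;\Z)\cong\Z/p\Z$, so $R(\Sigma_2)=R^{\mathrm{ab}}(\Sigma_2)=R^{\tau_2}(\Sigma_2)$; by Proposition~\ref{prop:image_Phi_n} and Proposition~\ref{prop:one_to_one} the irreducible metabelian classes correspond bijectively to the non--trivial abelian characters of $\pi_1(\Sigma_2)$, whose generator eigenvalues run through the $p$-th roots of unity $e^{\pm 2k\pi\sqrt{-1}/p}$. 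Hence the possible values of $\tr\rho(xy^{-1})$ are exactly $2\cos(2k\pi/p)$, $k=1,\dots,(p-1)/2$. A direct computation from the normal form gives $\tr\rho_k(xy^{-1}) = 2 + (e^{k\pi\sqrt{-1}/p}-e^{-k\pi\sqrt{-1}/p})^2 = 2\cos(2k\pi/p)$, so equating the two fixes the off--diagonal parameter to be precisely the one in the statement. Since the values $2\cos(2k\pi/p)$ are distinct for $k=1,\dots,(p-1)/2$, the $\rho_k$ have distinct characters and are therefore pairwise non--conjugate; as each is conjugate to one of the (already known to exist) irreducible metabelian representations, it automatically satisfies the relation $wx=yw$, and the $(p-1)/2$ of them exhaust all irreducible metabelian classes.

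The step I expect to be the main obstacle is the last identification in the previous paragraph: making rigorous that $\overline{xy^{-1}}$ realises a generator of $H_1(\Sigma_2;\Z)$ and that the abelian character indexed by $k$ is genuinely the $\hPhi_2$-image of $\rho_k$. Concretely, this requires reconciling the Lin--presentation bookkeeping that defines $\Phi_2$ (Lemma~\ref{lemma:presentation_for_Sigma_n}) with the two--bridge presentation underlying Riley's normal form, and checking that the trace computation on $xy^{-1}$ transports correctly across this change of generators. Once that matching is in place, everything else is either a short matrix calculation or a direct citation of the enumeration in Lemma~\ref{lemma:two_brige_S0} and Proposition~\ref{prop:fixed_point_set}.
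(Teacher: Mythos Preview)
Your proposal is correct and follows essentially the same route as the paper's proof: Riley's normal form with $t=-1$, the computation of $\Phi_2(\rho)(xy^{-1})=\rho(xy^{-1})$, and the trace comparison against the $p$-th roots of unity coming from $\pi_1(\Sigma_2)\cong\Z/p\Z$, with the bijection of Proposition~\ref{prop:one_to_one} closing the argument. The obstacle you flag---that $\overline{xy^{-1}}$ is a generator of $\pi_1(\Sigma_2)$---is exactly the one point the paper treats separately, and it does so geometrically (via Figure~\ref{fig:two_bridge} and the reference to \cite[Chapter~12]{burde03:_knots}) rather than through the Lin-presentation bookkeeping you anticipate.
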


Riley had shown a construction of non-abelian $\SL$-representations of
two--bridge knot groups in~\cite{Riley} by using a polynomial equation
$\phi_K(t, u)=0$.  His construction gives every representative in each
conjugacy class of a non-abelian representation.  Note that the
irreducible metabelian representations are given by using roots of
$\phi_K(-1, u)=0$.  We also obtain the next statement about
$\phi_K(-1, u)$.
\begin{theorem}\label{thm:explicit_Riley_poly}
  We keep notations in Theorem \ref{thm:explicit_conj_class}.  The
  polynomial $\phi_K(-1, u)$ has distinct $(p-1)/2$ roots
  $\{(e^{k\pi\sqrt{-1}/p}-e^{-k\pi\sqrt{-1}/p})^2\,|\, k=1, \ldots,
  (p-1)/2\}$. Namely, $\phi_K(-1, u)$ is expressed as
  \[
  \phi_K(-1, u) = (-1)^{(p-1)/2} \prod_{k=1}^{(p-1)/2}\left\{u -
    (e^{k\pi\sqrt{-1}/p}-e^{-k\pi\sqrt{-1}/p})^2\right\}.
  \]
\end{theorem}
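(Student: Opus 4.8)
The plan is to match the roots of $\phi_K(-1,u)$ with the conjugacy classes of irreducible representations lying in the slice $R_0(E_K)$, and then to evaluate those roots through the binary dihedral normal form. Recall that Riley's construction \cite{Riley} realizes each non-abelian representation of $\pi_1(E_K)=\langle x,y\mid wx=yw\rangle$ with meridian eigenvalues $s,s^{-1}$ (and $t=s^2$) by sending $x$ to the upper triangular and $y$ to the lower triangular $\SL$-matrix with diagonal $(s,s^{-1})$ and off-diagonal entries $1$ and $-u$ respectively; the relation $wx=yw$ then reads $\phi_K(t,u)=0$. A one-line matrix computation gives $\tr\rho(xy^{-1})=2+u$, hence $u=\tr\rho(xy^{-1})-2$ for every $t$. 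Setting $t=-1$, so that $s=\sqrt{-1}$ and $\tr\rho(x)=0$, identifies this family with the non-abelian part of $R_0(E_K)$. By Lemma~\ref{lemma:two_brige_S0} there are exactly $(p-1)/2$ such conjugacy classes, which already forces $\deg_u\phi_K(-1,u)=(p-1)/2$.

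Next I would compute these roots using the binary dihedral description of Proposition~\ref{prop:meta_conj_dihedral} (equivalently Proposition~\ref{prop:fixed_point}). In any irreducible metabelian representation the meridians $x,y$, having trace $0$, lie in $S^1_B$, so $\rho(x)=e^{\bm{i}\theta_x}\bm{j}$ and $\rho(y)=e^{\bm{i}\theta_y}\bm{j}$. A direct quaternionic calculation, using $e^{\bm{i}\theta}\bm{j}=\bm{j}e^{-\bm{i}\theta}$ and $\bm{j}^2=-\I$, gives $\rho(xy^{-1})=e^{\bm{i}(\theta_x-\theta_y)}$, so $\tr\rho(xy^{-1})=2\cos(\theta_x-\theta_y)$ and $u=2\cos(\theta_x-\theta_y)-2$. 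It then remains to pin down $\theta_x-\theta_y$ for the $k$-th class. Since $xy^{-1}$ is null-homologous in $E_K$, it lifts to $\pi_1(\Sigma_2)$ and its class generates $H_1(\Sigma_2;\Z)\cong\Z/p\Z$; under the correspondence $\Phi_2$ of Propositions~\ref{prop:correspond_rep} and \ref{prop:one_to_one}, the $k$-th irreducible metabelian character is carried to the abelian character of $\pi_1(\Sigma_2)$ sending this generator to $e^{2\pi\sqrt{-1}k/p}$. Comparing with $\rho(xy^{-1})=e^{\bm{i}(\theta_x-\theta_y)}$ forces $\theta_x-\theta_y\equiv 2\pi k/p$, whence $u=2\cos(2\pi k/p)-2=(e^{k\pi\sqrt{-1}/p}-e^{-k\pi\sqrt{-1}/p})^2$.

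Finally I would assemble the factorization. The $(p-1)/2$ candidate roots $-4\sin^2(k\pi/p)$, for $k=1,\dots,(p-1)/2$, are pairwise distinct because $k\pi/p\in(0,\pi/2)$ and $\sin$ is injective there; together with $\deg_u\phi_K(-1,u)=(p-1)/2$ this shows that $\phi_K(-1,u)$ equals a nonzero constant times $\prod_{k=1}^{(p-1)/2}\{u-(e^{k\pi\sqrt{-1}/p}-e^{-k\pi\sqrt{-1}/p})^2\}$, and comparing the top-degree coefficient in $u$ against Riley's normalization of $\phi_K$ fixes the constant as $(-1)^{(p-1)/2}$. The main obstacle is the middle step: rigorously identifying $xy^{-1}$ with a generator of $H_1(\Sigma_2;\Z)$ and tracking the index $k$ through $\Phi_2$, i.e.\ showing that the binary dihedral representation indexed by $k$ in Proposition~\ref{prop:meta_conj_dihedral} is precisely the one whose associated $H_1(\Sigma_2;\Z)$-character sends the generator to $e^{2\pi\sqrt{-1}k/p}$. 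Once this bookkeeping is in place, the trace computations on either side are routine.
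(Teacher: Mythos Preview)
Your approach is essentially the same as the paper's: both determine the roots of $\phi_K(-1,u)$ by computing the trace of $\rho(xy^{-1})$ in Riley's parametrization (which is $u+2$) and identifying it, via $\Phi_2$, with the trace of a generator of $\pi_1(\Sigma_2)\cong\Z/p\Z$ under an abelian representation; the degree and leading coefficient are then taken from Riley's paper. The paper is slightly more economical in that it packages the root computation as the separate Theorem~\ref{thm:explicit_conj_class}, working directly with Riley's matrices to get $\Phi_2(\rho)(xy^{-1})=\left(\begin{smallmatrix}u+1&1\\u&1\end{smallmatrix}\right)$, rather than passing through the binary dihedral normal form; your detour through $e^{\bm{i}\theta_x}\bm{j}$ is correct but unnecessary, since the trace $u+2$ is already visible in Riley's coordinates.

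One point that needs repair: the sentence ``there are exactly $(p-1)/2$ such conjugacy classes, which already forces $\deg_u\phi_K(-1,u)=(p-1)/2$'' does not stand on its own. The bijection between conjugacy classes and \emph{distinct} roots of $\phi_K(-1,u)$ gives only that the number of distinct roots is $(p-1)/2$; it does not rule out repeated roots, so the degree could in principle be larger. The paper closes this gap by citing \cite{Riley} directly for the degree of $\phi_K(t,u)$ in $u$, and the proof of Proposition~3 there for the leading coefficient $(-1)^{(p-1)/2}$. Once you import that degree statement, your argument goes through. The ``main obstacle'' you flag---that $xy^{-1}$ represents a generator of $H_1(\Sigma_2;\Z)$---is exactly what the paper addresses via Figure~\ref{fig:two_bridge} and a reference to \cite[Chapter~12]{burde03:_knots}, and the indexing by $k$ is then just the labelling of the $(p-1)/2$ nontrivial characters of $\Z/p\Z$ up to inversion; no finer bookkeeping is needed, since Proposition~\ref{prop:one_to_one} already guarantees the bijection.
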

Note that a generator of $\pi_1(\branchedcoverZHS{2})$ in $S^3$ is
illustrated as in Figure~\ref{fig:two_bridge} (for details about the
generator, see~\cite[Chapter 12]{burde03:_knots}).
\begin{figure}[!ht]
  \begin{center}
    \includegraphics[scale=.7]{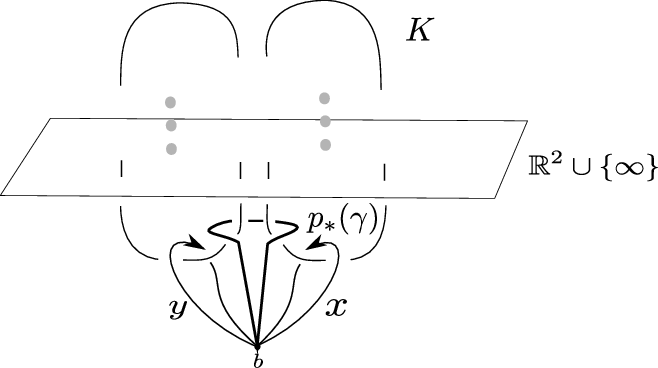}
  \end{center}
  \caption{The image of a generator $\gamma$ of
    $\pi_1(\branchedcoverZHS{2})$}\label{fig:two_bridge}
\end{figure}

\begin{proof}[Proof of Theorem \ref{thm:explicit_conj_class}]
  We focus on the non-abelian part in $S_0(\knotexterior)$.  By
  Theorem~$1$ in \cite{Riley}, every conjugacy class of non-abelian
  $\SL$-representations has a representative such that :
  \[
  \rho(x) = \left(
    \begin{array}{cc}
      \sqrt{t} & 1/\sqrt{t} \\
      0 & 1/\sqrt{t}
    \end{array}
  \right), \quad 
  \rho(y) = \left(
    \begin{array}{cc}
      \sqrt{t} & 0 \\
      -u\sqrt{t} & 1/\sqrt{t}
    \end{array}
  \right),
  \]
  where $t$ and $u$ satisfy the equation that $\phi_K(t, u) = 0$.  We
  now consider the case that $t=-1$ to describe elements in
  $S_0(\knotexterior)$.  
  We can take a generator $\gamma$ of the cyclic group $\pi_1(\branchedcoverZHS{2})$ 
  so that $p_*(\gamma) = xy^{-1}$ holds.
  The image of the homology class $[\gamma]$ by $p_*$ is null--homologous. 
  Then $\PhiII(\rho)(\gamma)$ is expressed as follows:
  \begin{align*}
    \PhiII(\rho)(\gamma)
    &= \left(\sqrt{-1}\right)^{p_*[\gamma]} \cdot \rho(p_*(\gamma)) \\
    &= \rho(xy^{-1})\\
    &= \left(
      \begin{array}{cc}
        u+1 & 1 \\
        u & 1
      \end{array}
    \right).
  \end{align*}
  By Theorem~\ref{thm:geom_S0}, the representation $\PhiII(\rho)$ is
  abelian.  Every abelian representation of
  $\pi_1(\branchedcoverZHS{2})$ is determined by the eigenvalues for
  the generator $\gamma$.  Since $\pi_1(\branchedcoverZHS{2})$ is the
  cyclic group with order $p$, the matrix
  $\PhiII(\rho)(\gamma)$ is conjugate to
  \[
  \left(
    \begin{array}{cc}
      e^{2\pi\sqrt{-1}/p} & 0 \\
      0 & e^{-2\pi\sqrt{-1}/p}
    \end{array}
  \right).
  \]
  Comparing the traces of $\PhiII(\rho)(\gamma)$ and the above
  diagonal matrix, we have that
  \[
    u = (e^{\pi\sqrt{-1}/p} - e^{-\pi\sqrt{-1}/p})^2.
  \] 
  Since $\PhiII$ gives a one--to--one correspondence on metabelian
  characters, we can conclude the statement. 
\end{proof}
\begin{proof}[Proof of Theorem~\ref{thm:explicit_Riley_poly}]
  If we fix a choice of square root of $-1$, the roots of $\phi_K(-1,
  u)$ correspond to the representatives of non-abelian part in
  $S_0(\knotexterior)$ by one--to--one.  From \cite{Riley}, the
  highest degree of $u$ in $\phi_K(t, u)$ is given by the determinant
  of $K$, i.e., $p=|\Delta_K(-1)|$.  Moreover it follows from the
  proof of \cite[Lemma 2]{Riley} that the coefficient of
  leading term on $u$ of $\phi_K(t, u)$ is equal to $(-1)^{(p-1)/2}$.  By the
  degree of $u$ and Theorem~\ref{thm:explicit_conj_class}, all roots
  of $\phi_K(-1, u)$ are given preciously by $(p-1)/2$ distinct real
  numbers:
  \[
  \{ (e^{k\pi\sqrt{-1}/p}-e^{-k\pi\sqrt{-1}/p})^2 
       \,|\, 
     k = 1, \ldots, (p-1)/2 \}.
  \]
  This completes the proof.  
\end{proof}

\medskip

\noindent {\bf Example}.  Let $K$ be the trefoil knot.  Then
$\phi_K(t, u)$ is given by $u-t-t^{-1}+1$.  
The root of $\phi_K(-1, u)$ is $u=-3$.  
The corresponding representation $\rho$ is expressed as
\[
\rho(x) = \left(
  \begin{array}{cc}
    \sqrt{-1} & -\sqrt{-1} \\
    0 & -\sqrt{-1}
  \end{array}
\right),\quad 
\rho(y)= \left(
  \begin{array}{cc}
    \sqrt{-1} & 0 \\
    3\sqrt{-1} & -\sqrt{-1}
  \end{array}
\right).
\]
Then the matrix $\PhiII(\rho)(xy^{-1})$ is given by
$\left(
  \begin{array}{cc}
    -2 & 1 \\
    -3 & 1
  \end{array}
\right).$
By direct calculations, we can see that
$\PhiII(\rho)(xy^{-1})^3 = \bm{1}$. Since $|\Delta_K(-1)|=3$, the formula in
Theorem~\ref{thm:explicit_Riley_poly} turns into
\[
u - (e^{\pi\sqrt{-1}/3}-e^{-\pi\sqrt{-1}/3})^2
= u + 3.
\]
This polynomial coincides with $\phi_K(-1, u)$.

Lemma~\ref{lemma:two_brige_S0} and Theorem~\ref{thm:geom_S0} show that 
the map $\hPhi$ is bijective for all two-bridge knots. 
Moreover the following holds on the surjectivity of $\hPhi$.
\begin{proposition}\label{prop:pretzel}
  If $K$ is a pretzel knot of type $(p, q, r)$, then the map $\hPhi$
  from $S_0(\knotexterior)$ to $X(\branchedcoverZHS{2})$ is
  surjective.
\end{proposition}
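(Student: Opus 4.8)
The plan is to convert surjectivity into a statement about the covering involution and then read off the answer from the Seifert fibered structure of $\Sigma_2$. By Theorem~\ref{thm:geom_S0} the image of $\hPhi_2$ is precisely the fixed point set $X(\Sigma_2)^{\tau_2^*}$, so $\hPhi_2\colon S_0(E_K)\to X(\Sigma_2)$ is surjective if and only if $X(\Sigma_2)^{\tau_2^*}=X(\Sigma_2)$, i.e.\ every character of $\pi_1(\Sigma_2)$ is fixed by $\tau_2^*$. Equivalently, I must show that every $\SL$-representation of $\pi_1(\Sigma_2)$ is conjugate to a $\tau_2$-equivariant one. The abelian characters are already accounted for by Lemma~\ref{lemma:tau_equiv_contains_abelian}, so the whole problem reduces to the irreducible characters.

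For a pretzel knot of type $(p,q,r)$ the double branched cover $\Sigma_2$ is a Seifert fibered space over the base orbifold $S^2(p,q,r)$, with a presentation
\[
\pi_1(\Sigma_2)=\langle x_1,x_2,x_3,h \mid [x_i,h]=1,\ x_i^{\alpha_i}=h^{\beta_i},\ x_1x_2x_3=h^{-e}\rangle,
\]
where $h$ is the regular fiber and is central, and the Seifert data $(\alpha_i,\beta_i,e)$ are determined by $(p,q,r)$. I would obtain this presentation, together with the action of $\tau_2$, from Lin's presentation via Lemma~\ref{lemma:presentation_for_Sigma_n}, staying inside the paper's framework; for odd $(p,q,r)$ the standard Seifert surface has genus one, so Lin's presentation has two generators and the computation runs exactly as in the trefoil example of the previous section. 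The covering involution $\tau_2$ is the Montesinos involution with quotient $(S^3,K)$: it preserves the fibration, reverses the fiber and reflects the base, so on $\pi_1$ it sends $h\mapsto h^{-1}$ and each $x_i$ to a conjugate of $x_i^{-1}h^{c_i}$. For an irreducible $\rho$ the central element forces $\rho(h)=\pm\I$ by Schur's lemma, whence $\rho(\tau_2(h))=\rho(h)^{-1}=\rho(h)$. Because the character of an irreducible $\SL$-representation of this three-generator group is determined by $\tr\rho(x_1),\tr\rho(x_2),\tr\rho(x_3)$ once $\rho(h)$ is fixed (the relation $x_1x_2x_3=h^{-e}$ pins down the product traces), and because $\tr A=\tr A^{-1}$ in $\SL$, it suffices to check that $\tr\rho(\tau_2(x_i))=(\pm1)^{c_i}\tr\rho(x_i)$ equals $\tr\rho(x_i)$ for $i=1,2,3$, where the sign is $\rho(h)^{c_i}\in\{\pm1\}$. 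This then gives $\chi_{\rho\circ\tau_2}=\chi_\rho$, hence $\rho\in R^{\tau_2}(\Sigma_2)$ and $\chi_\rho\in\im\hPhi_2$.

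The main obstacle I expect is precisely the sign bookkeeping hidden in the factors $(\pm1)^{c_i}$: I must determine the central exponents $c_i$ produced by $\tau_2$ and verify that $(\pm1)^{c_i}=1$ whenever $\tr\rho(x_i)\neq 0$, so that no generator trace is flipped in sign. This requires writing the involution and the relations down precisely for each parity type of $(p,q,r)$, which controls how $\Sigma_2$ and the data $(\alpha_i,\beta_i,e)$ are identified, after which only a short explicit trace computation remains. Once the three generator traces are matched for every irreducible $\rho$, together with the abelian case from Lemma~\ref{lemma:tau_equiv_contains_abelian} we obtain $X(\Sigma_2)^{\tau_2^*}=X(\Sigma_2)$, and surjectivity of $\hPhi_2$ follows immediately from Theorem~\ref{thm:geom_S0}.
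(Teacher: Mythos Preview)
Your overall strategy matches the paper's: reduce surjectivity to $X(\Sigma_2)^{\tau_2^*}=X(\Sigma_2)$ via Theorem~\ref{thm:geom_S0}, dispose of abelian characters by Lemma~\ref{lemma:tau_equiv_contains_abelian}, and then show that every irreducible representation of $\pi_1(\Sigma_2)$ is $\tau_2$-equivariant using the Seifert fibered presentation of the Brieskorn manifold. That is exactly the skeleton of the paper's argument.

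Where you diverge is in the endgame, and your version is left unfinished. You propose to compare characters by matching the three generator traces, and you correctly flag the sign factors $(\pm 1)^{c_i}$ as the main obstacle, but you do not resolve them; you also anticipate a case analysis on the parities of $(p,q,r)$ that turns out to be unnecessary. The paper bypasses all of this with one clean move: it passes from the generators $s_1,s_2,s_3,h$ to $t_1=s_1$, $t_2=s_1s_2$, $h$. In these coordinates the Montesinos involution acts simply by $t_1\mapsto t_1^{-1}$, $t_2\mapsto t_2^{-1}$ (and $h\mapsto h^{\pm 1}$, which is irrelevant since $\rho(h)=\pm\I$ for irreducible $\rho$). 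After diagonalising $\rho(t_1)$ and normalising $\rho(t_2)$ so that its off-diagonal entries are opposite, conjugation by the explicit matrix $\bm{k}=\left(\begin{smallmatrix}0&\sqrt{-1}\\\sqrt{-1}&0\end{smallmatrix}\right)$ realises $\tau_2^*\rho$ on the nose. No trace bookkeeping, no parity cases, no residual signs.

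So your plan is sound and would succeed once the action of $\tau_2$ is written down explicitly, but the paper's change of generators is the missing idea that turns the ``short explicit trace computation'' you anticipate into a two-line verification.
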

The key to Proposition $\ref{prop:pretzel}$ is that the 2--fold
branched cover $\branchedcoverZHS{2}$ along a pretzel knot of type
$(p, q, r)$ is the Brieskorn manifold of type $(p, q, r)$.  The
fundamental group of this Brieskorn manifold has a presentation with
four generators $s_1$, $s_2$, $s_3$ and $h$ (a central element) as
that of a Seifert manifold (for more details, 
see~\cite[Chapter 12]{burde03:_knots} and
\cite{nikolai02:_repres_spaces_of_seifer_fiber_homol_spher}).  
In fact, it is more convenient to work with another set of generators
$t_1 = s_1$, $t_2 = s_1s_2$ and $h$ when we consider the induced
action on the fundamental group by the covering transformation.  We
use this idea to prove Proposition~$\ref{prop:pretzel}$.
\begin{proof}
  The character variety $X(\branchedcoverZHS{2})$ is expressed as the
  union $X^{\mathrm{ab}}(\branchedcoverZHS{2}) \cup
  X^{\mathrm{irr}}(\branchedcoverZHS{2})$.  We will show that each
  part is contained in $\im \hPhi$.  By
  Proposition~\ref{prop:one_to_one},
  $X^{\mathrm{ab}}(\branchedcoverZHS{2})$ is contained in $\im
  \hPhi$.  Hence it suffices to show that
  $X^{\mathrm{irr}}(\branchedcoverZHS{2})$ is contained in $\im
  \hPhi$. Since $\im \hPhi$ is $X(\branchedcoverZHS{2})^{\tau}$,
  we check the inclusion $X^{\mathrm{irr}}(\branchedcoverZHS{2})
  \subset X(\branchedcoverZHS{2})^{\tau}$.  For 
  $\chi_\rho \in X^{\mathrm{irr}}(\branchedcoverZHS{2})$, 
  we have the following equivalence relations:
  \begin{align*}
    \tau^*(\chi_\rho) = \chi_\rho &\Leftrightarrow
    \chi_{\tau^* \rho} = \chi_\rho \\
    &\Leftrightarrow
    \rho \stackrel{conj}{\sim} \tau^* \rho \\
    &\Leftrightarrow \rho \in R^{\tau}(\branchedcoverZHS{2}).
  \end{align*}
  So, to complete the proof, it is enough to show that every
  irreducible representation is $\tau$-equivariant. The fundamental
  group $\pi_1(\branchedcoverZHS{2})$ has the following presentation:
  \[
  \langle s_1, s_2, s_3, h \,|\, s_1^p h=1, s_2^q h=1, s_3^r h=1,
  [s_i, h]=1\,(1\leq i \leq 3), s_1 s_2 s_3=1 \rangle.
  \]
  The action of $\tau$ on the generators $s_1, s_2, s_3, h$ is
  expressed as follows:
  \[
  \tau \co h \mapsto h,\, s_1 \mapsto s_1^{-1},\, s_2 \mapsto s_1
  s_2^{-1} s_1^{-1},\, s_3 \mapsto s_1 s_2 s_3^{-1} s_2^{-1} s_1^{-1}.
  \]
  We set $t_1$ and $t_2$ as $t_1 = s_1$ and $t_2 = s_1 s_2$.  Then
  three elements $h$, $t_1$ and $t_2$ generate the group
  $\pi_1(\branchedcoverZHS{2})$.  For the new generators, the action
  of $\tau$ is expressed as
  \[
  \tau \co h \mapsto h,\, t_1 \mapsto t_1^{-1},\, t_2 \mapsto t_2^{-1}.
  \]

  Since $\rho$ is irreducible, the image $\im \rho$ is a non--abelian
  subgroup in $\SL$.  Hence the central element $\rho(h)$ in $\im
  \rho$ is $\pm \I$.  By the relation $t_1^p h = 1$, the matrix
  $\rho(t_1)$ has a finite order, in particular it is hyperbolic. By
  taking conjugation, we can suppose that the irreducible
  representation $\rho$ is given by
  \[
  \rho(h) = \pm \I,\quad \rho(t_1) = \left(
    \begin{array}{cc}
      a & 0 \\
      0 & a^{-1}
    \end{array}
  \right),\quad \rho(t_2) = \left(
    \begin{array}{cc}
      s & t \\
      u & v
    \end{array}
  \right),
  \]
  where $ut \not = 0$.  We set a complex number $\delta$ as $\delta^2
  = \sqrt{-u/t}$.  Moreover, taking a conjugate by the matrix
  $\left(
    \begin{array}{cc}
      \delta & 0 \\
      0 & \delta^{-1}
    \end{array}
  \right)$, 
  we can assume that
  $\rho(t_2)$ is given by
  $
  \left(
    \begin{array}{cc}
      s & t \\
      -t & v
    \end{array}
  \right)
  $
  at the beginning.

  For a unit quaternion $\bm{k} = \left(
    \begin{smallmatrix}
      0 & \sqrt{-1} \\
      \sqrt{-1} & 0
    \end{smallmatrix}
  \right)$, we have 
  \[
  \bm{k}\rho(h)\bm{k}^{-1} = \rho(h),\ \bm{k}\rho(t_1)\bm{k}^{-1} =
  \rho(t_1)^{-1},\ \bm{k}\rho(t_2)\bm{k}^{-1} = \rho(t_2)^{-1}.
  \]
  Therefore it follows that $\tau^* \rho$ coincides with $\bm{k}
  \rho \bm{k}^{-1}$, i.e., $\rho$ is $\tau$--equivariant.
  This completes the proof.   
\end{proof}

\section*{Acknowledgment}
  The authors would like to thank Professor Xiao-Song Lin for giving
  substantial comments which drew their attention to metabelian
  representations when the first author stayed at the University of
  California, Riverside in 2006.  
  The second author would like to thank Professor
  Toshitake Kohno for his advice and support. .

  The authors also gratefully acknowledge the many technical and
  stylistic suggestions made by the referee. They helped us to improve
  all facets of the text.

%%%%%%%%%%%%%%%%%%%%%%%%%%%%%%%%%%%%%%%%%%%%%%%%%%%%%%%%%%%%%%%%%%%% 
% reference
%%%%%%%%%%%%%%%%%%%%%%%%%%%%%%%%%%%%%%%%%%%%%%%%%%%%%%%%%%%%%%%%%%%% 
\bibliographystyle{alpha}
\bibliography{geometry_S0} 
\end{document}